\newcommand\ignore[1]{}
\newcommand\bm[1]{\boldsymbol{#1}}
\newcommand\R{\mathbb{R}}
\newcommand\Z{\mathbb{Z}}
\newcommand{\sz}[1]{} 
\newcommand{\proj}{\text{proj}}
\DeclareRobustCommand
\newcommand{\pushright}[1]{\ifmeasuring@#1\else\omit\hfill$\displaystyle#1$\fi\ignorespaces}
\newcommand{\pushleft}[1]{\ifmeasuring@#1\else\omit$\displaystyle#1$\hfill\fi\ignorespaces}
\newcolumntype{C}[1]{>{\minwd c{#1}}c<{\endminwd}}
\newcolumntype{C}[1]{>{\centering\arraybackslash}m{#1}}
\def\fillandplacepagenumber{%
 \vbox to 0pt{\vss}\vfill
 \vbox to 0pt{\baselineskip0pt
   \hbox to\linewidth{\hss}%
   \baselineskip\footskip
   \hbox to\linewidth{%
     \hfil\thepage\hfil}\vss}}
\newcommand{\BA}{\begin{array}}
\newcommand{\EA}{\end{array}}
\newcommand{\cl}{\text{cl }}
\newcommand{\conv}{\text{conv}}
\newcommand{\mybar}[1]{\ifmmode\setbox0\hbox{$#1$}%
\else
\setbox0\hbox{#1}%
\fi
\makebox[\the\wd0][c]{%
\rule[0.42\ht0]{0.75\wd0}{0.7pt}}\hspace*{-\the\wd0}#1}
\newcommand{\bbR}{\mathbb{R}}
\newcommand{\vast}{\bBigg@{4}}
\newcommand{\Vast}{\bBigg@{5}}
\newcommand{\ext}{\text{ext}}
\newtheorem{thm}{Theorem}
\newtheorem{cor}{Corollary}
\newtheorem{remark}{Remark}
\newtheorem{definition}{Definition}
\newtheorem{obs}{Observation}
\newtheorem{proposition}[thm]{Proposition}
\newtheorem{assumption}{Assumption}
\newtheorem{example}{Example}
\theoremstyle{remark}
\newtheorem{claim}{Claim}
\title{Convexification of multi-period quadratic programs with indicators}
\author{Jisun Lee, Andr\'{e}s G\'{o}mez, Alper Atamt\"{u}rk
\thanks{This research was supported, in part, by NSF AI Institute for Advances in Optimization Award 2112533, DOD ONR grant 12951270 and AFOSR grant FA9550-22-1-0369.}}
\begin{document}
\maketitle
\begin{abstract}
We study a multi-period convex quadratic optimization problem, where the state evolves dynamically as an affine function of the state, control, and indicator variables in each period. We begin by projecting out the state variables using linear dynamics, resulting in a mixed-integer quadratic optimization problem with a (block-) factorizable cost matrix. We discuss the properties of these matrices and derive a closed-form expression for their inverses. Employing this expression, we construct a closed convex hull representation of the epigraph of the quadratic cost over the feasible region in an extended space. Subsequently, we establish a tight second-order cone programming formulation with $\mathcal{O}(n^2)$ conic constraints. We further propose a polynomial-time algorithm based on a reformulation of the problem as a shortest path problem on a directed acyclic graph. To illustrate the applicability of our results across diverse domains, we present case studies in statistical learning and hybrid system control.
\end{abstract}

\section{Introduction} \label{subsec:Intro}

Given a number of periods $n\in \Z_+$, dimension $d\in \Z_+$, vector $\bm{c}\in \R^n$ and sequence of vectors $\{\bm{r}_i\}_{i=1}^{n+1}$, $\{\bm{f}_i\}_{i=1}^n$, $\{\bm{b}_i\}_{i=0}^n$ and matrices $\{\bm{P}_i\}_{i=1}^{n+1}$, $\{\bm{A}_i\}_{i=1}^n$ with $\bm{r}_i,\bm{f}_i, \bm{b}_i \in \R^d$, nonsingular $\bm{P}_i,\bm{A}_i\in \R^{d\times d}$ and $\bm{P}_i\succ \bm{0}$ , we consider a mixed-integer quadratic optimization problem (MIQP) of the form
\begin{subequations}\label{MIQP-ddim}
    \begin{align} \hspace{3cm}\min_{\bm{s},\bm{x},\bm{z}} \ & \sum_{i=1}^{n+1} \left(\boldsymbol{s}_{i} - \boldsymbol{r}_{i}\right)^\top \boldsymbol{P}_{i} \left(\boldsymbol{s}_{i} - \boldsymbol{r}_{i}\right)  + \sum_{i=1}^{n} \bm{f}_i^\top \bm{x}_{[i]} +\sum_{i=1}^{n} c_i z_i \hspace{-3cm} \label{MIQP-ddim_obj}\\ 
        \text{s.t. } \ & \bm{s}_1 = \bm{b}_0  \label{MIQP-ddim_init}\\
        & \boldsymbol{s}_{i+1} = \boldsymbol{A}_i \boldsymbol{s}_i + \boldsymbol{x}_{[i]} + \bm{b}_i, && i \in [n] \hspace{3cm} \label{MIQP-ddim_transition}\\ 
        & \boldsymbol{x}_{[i]} (1-z_i) = \boldsymbol{0},\; z_i\in \{0,1\}, && i \in [n] \label{MIQP-ddim_indicators}\\
        &\left(\{\bm{s}_i\}_{i=1}^{n+1},\bm{x},\bm{z}\right)\in C\subseteq \R^{(n+1)d}\times \R^{nd}\times \R^n, \hspace{-3cm}\label{MIQP-ddim_vars} 
    \end{align}
    \end{subequations}
where $\boldsymbol{0}$ is a $d$-dimensional vector of zeros.
Note that $\bm{x} \in \R^{dn}$ indicates a block vector consisting of $n$ of $d$-dimensional vectors, where the $i$-th block corresponds to $\bm{x}_{[i]}$ in period $i$.
Problem \eqref{MIQP-ddim} arises in hybrid control, statistical learning, and, more generally, sequential decision-making problems, where variables $\bm{s}_i$ represent the state of a system in period $i$. In each period, the system evolves according to the affine dynamics \eqref{MIQP-ddim_transition}: the state $\bm{s}_{i+1}$ depends on the state of the previous period $\bm{s}_i$ and on an input action $\bm{x}_{[i]}$ that can be undertaken by the decision-maker. If such action is done, then a variable cost $\bm{f}_i^\top \bm{x}_{[i]}$ is incurred as well as a fixed cost $c_i$ -- modeled through the use of indicator constraints \eqref{MIQP-ddim_indicators}, stating that $\bm{x}_{[i]} \neq \bm{0}\sz{d}\implies z_i=1$. The convex quadratic objective terms are used to model deviations between state $\bm{s}_i$ and a predetermined target $\bm{r}_i$, and constraints \eqref{MIQP-ddim_vars} encode additional constraints
of the problem.  

\paragraph{Contributions and outline} In this paper, we describe the convex hull of the mixed-integer epigraph of \eqref{MIQP-ddim_obj} over constraints \eqref{MIQP-ddim_init}-\eqref{MIQP-ddim_indicators}. The resulting formulation is SOCP-representable in an extended formulation with additional $\mathcal{O}(d n^2)$ variables and $\mathcal{O}(dn + n^2)$ constraints. Moreover, the convex representation naturally leads to an algorithm to solve \eqref{MIQP-ddim} --without side constraints \eqref{MIQP-ddim_vars}-- in $\mathcal{O}(n^2 \cdot \pi (d))$ time, where $\pi (d)$ denotes the computational complexity of inverting a $d \times d$ matrix and a multiplication of two $d \times d$ matrices. Our computational results show that when the side constraints encoded by $C$ are relatively simple, using the proposed formulations with off-the-shelf mixed-integer optimization solvers substantially improves the runtimes.
The proposed polynomial time algorithm can solve problems without side constraints  \eqref{MIQP-ddim_vars} in seconds that would otherwise take hours with branch-and-bound algorithms. 

The rest of this paper is organized as follows. In the remainder of this section, we discuss related literature and introduce the notations used throughout the paper. In \S\ref{sec:preliminary}, we review relevant concepts and the reformulation technique employed in the study. 
In \S\ref{sec:convexification}, we first reformulate \eqref{MIQP-ddim} into an MIQP with a special form of cost matrix $\bm{Q}$ by projecting out the state variables. We then present a convex hull representation of the epigraph set of the quadratic cost function by establishing a new expression for the inverses of (block-) factorizable matrices. 
In \S\ref{sec:algorithm}, two solution methods that effectively solve the MIQP are discussed.
Two case studies, detailed in \S\ref{sec:calcium} and \ref{sec:path-following}, demonstrate the efficient application of our results in the spike detection and path-following problems. 
In \S\ref{sec:conclusion}, we conclude the paper with final remarks.

\paragraph{Related work} 
Multi-period mixed-integer convex optimization problems with linear or convex quadratic cost functions have been widely studied in diverse domains. An example is the lot-sizing problem
\begin{equation*}
    \begin{aligned}
        \min_{\bm{s}, \bm{x}, \bm{z}} \ & \sum_{i=1}^{n+1} p_i s_i + \sum_{i=1}^{n} f_i x_i + \sum_{i=1}^n c_i z_i \\
        \text{s.t. }\ & s_1 = b_0 \\
        & s_{i+1} = s_i + x_i - b_i, &&  i \in [n] \\
        & x_i (1-z_i) = 0, \ z_i \in \{0,1\}, &&  i \in [n]\\
        & (\bm{s}, \bm{x}, \bm{z} ) \in C \subseteq \R^{n+1} \times \R^n \times \R^n,
    \end{aligned}
\end{equation*}
where $s_i$ is the stock at the end of period $i$, $x_i$ is the amount produced in period $i$, $z_i$ is a binary variable, indicating whether the item is produced in period $i$, and $b_i$ is the demand in period $i$. The cost comprises the unit holding cost $h_i$, the unit production cost $f_i$, and the setup cost $c_i$ in period $i$. For the uncapacitated version of the lot-sizing problem
can be solved in polynomial-time by dynamic programming (DP) \cite{wagner1958dynamic,wagelmans1992economic,federgruen1991simple,aggarwal1993improved}.
However, once the capacity constraints are included in $C$, the lot-sizing problem becomes NP-hard \cite{bitran1982computational}. 
Nevertheless, cutting plane approaches have been effective for solving the capacitated lot-sizing problem \cite{AM:lspoly,barany1984strong,leung1989facets,pochet1988valid}.

Other relevant problems with convex quadratic costs frequently arise in the context of statistical learning with time-series data. Notable applications include image segmentation \cite{hochbaum2001efficient}, change/signal detection \cite{atamturk2021sparse,bach2019submodular,jewell2018exact,yan2022real},
as well as the estimation of hidden states in Gaussian hidden Markov models \cite{gomez2021outlier,bhathena2024parametric}. These applications involve substructures that can be represented in the form of \eqref{MIQP-ddim}.
Similar problems are also actively studied in the hybrid system control domain. The discrete-time switched linear quadratic regulation (DSLQR) problem encompasses analogous subproblems \cite{borrelli2005dynamic,bemporad1999control,zhang2009study}. 
Specifically, the path-following problem is often formulated with a convex quadratic cost function and linear system dynamics. This problem is studied in diverse domains such as robotics \cite{mellinger2012mixed,kuindersma2016optimization}, autonomous vehicles \cite{qian2016optimal,quirynen2024real}, satellite control \cite{hu2019dynamic}, and hybrid electric vehicle control \cite{borhan2011mpc}.

Parametric programming has been widely utilized to tackle multi-period MIQP problems in hybrid control systems, leveraging the piecewise affinity of optimal control law \cite{bemporad2002explicit,dua2002multiparametric}, which also applies to general multi-period problems of the form \eqref{MIQP-ddim}. Parametric MIQPs are commonly addressed using DP algorithms \cite{borrelli2005dynamic,bhathena2024parametric} or branch-and-bound method \cite{axehill2010improved}. Due to the curse of dimensionality, numerous approximation methods have been developed \cite{summers2011multiresolution,axehill2014parametric}. 
Research on strong formulations of multi-period MIQPs includes comparisons of strengths of different formulations \cite{axehill2010convex, marcucci2019mixed} and development of extended formulations through disjunctive programming \cite{kurtz2021more, marcucci2024shortest}.
Although studies on strengthening the multi-period MIQP with dynamical system constraints are limited, strong formulations of relevant MIQPs with indicator variables have been actively studied \cite{agsuper,han2x2,wei2024convex,atamturk2018strong,liu2024polyhedral, frangioni2006perspective,akturk2009strong,gunluk2012perspective,bonami2015mathematical,anstreicher2021quadratic, wu2017quadratic, dong2013valid, gunluk2010perspective, bertsimas2023new, zheng2014improving}. These studies apply to general MIQPs and do not exploit the special structure imposed by
the linear dynamics constraint \eqref{MIQP-ddim_transition} that links adjacent state variables. In some cases, however, the linear transition can be embedded in special cost matrices. 
Related MIQP formulations have been studied, where the linear dependencies between periods are modeled as penalties in the objective function rather than as hard constraints.
Specifically, MIQPs with tridiagonal matrices\cite{liu2023graph}, banded matrices \cite{gomez2024real}, or adjacency matrices of tree graphs \cite{bhathena2024parametric} are particularly relevant to this study.

\paragraph{Notation} We denote scalars as plain lowercase letters $c$, vectors as boldface lowercase letters $\boldsymbol{v}$, and matrices as boldface uppercase letters $\boldsymbol{W}$. 
We use notations $[n] = \{1,\ldots, n\}$ and $[m,n] = \{m,  \ldots,n\}$ for $m,n \in \mathbb{Z}_+$ such that $m < n$. 
Additionally, we adopt the convention of $0 f(x/0) = \lim_{z \rightarrow 0+} z f(x/z)$ for any convex function $f: \bbR \rightarrow \bbR$, thus $z f(x/z)$ for $z \geq 0$ is the closure of the perspective function of $f$. The convex hull of a set $X$ is written as $\conv (X)$ and its closure is denoted by $\cl \conv (X)$.
We further specify some vector and matrix notations. Let $\boldsymbol{0}\sz{d} \in \bbR^d$ and $\boldsymbol{1}\sz{d} \in \bbR^d$ be vectors of zeros and ones of size $d$, respectively. Similarly, let $\boldsymbol{O}\sz{d} \in \bbR^{d \times d}$ be a square matrix of zeros and $\boldsymbol{I}\sz{d} \in \bbR^{d \times d}$ be an identity matrix.
Moreover, let $\boldsymbol{e}_i \in \bbR^n$ be an $n$-dimensional vector such that the $i$-th element is $1$, and other entries are $0$. Likewise, $\boldsymbol{e}_S \in \bbR^n$ for some $S \subseteq [n]$ indicates an $n$-dimensional vector of which the $i$-th element is $1$ if $i \in S$, and $0$ otherwise. In the same manner, let $\boldsymbol{E}_i \in \bbR^{dn \times d}$ denote a block matrix consisting of n blocks of $d \times d$ matrices, where the $i$-th block is $\boldsymbol{I}\sz{d}$ and other blocks are $\boldsymbol{O}\sz{d}$.
For an index set $S \subseteq [n]$ such that $|S| = k$, the vector $\boldsymbol{v}_S \in \bbR^k$ and $\boldsymbol{W}_S \in \bbR^{k\times k}$ indicate the subvector of a vector $\boldsymbol{v} \in \bbR^n$ and the principal submatrix of a matrix $\boldsymbol{W} \in \bbR^{n\times n}$ induced by $S$, respectively. Additionally, $\hat{\boldsymbol{v}}_S \in \bbR^n$ is the $n$-dimensional vector that $\hat{v}_i = v_i$ if $i \in S$, and $\hat{v}_i = 0$, otherwise. Similarly, $\hat{\boldsymbol{W}}_S \in \bbR^{n \times n}$ represents $n \times n$ matrix that fills zeros into $\boldsymbol{W}_S$ for $(i,j)$-th entries for $i \notin S$ or $j \notin S$. Moreover, the notation $\hat{\boldsymbol{W}}_{S}^{-1} \in \bbR^{n \times n}$ denotes the matrix obtained by first selecting the principal submatrix $\boldsymbol{W}_{S}$ from $\boldsymbol{W}$, computing its inverse, and then applying the hat operation.
For example, let $\bm{W} = \begin{bmatrix}
    5 & 4 & 2 \\ 4 & 8 & 4 \\ 2 & 4 & 8
\end{bmatrix}$. Then, for $S=\{1,3\}$, $\hat{\bm{W}}_{S}^{-1}$ is computed as follows: 
\begin{align*}
    \bm{W}_{S} = \begin{pmatrix}
        5 & 2 \\ 2 & 8
    \end{pmatrix} \ \ \ \longrightarrow \ \ \ \bm{W}_{S}^{-1} = \begin{pmatrix}
        2/9 & -1/18 \\ -1/18 & 5/36
    \end{pmatrix} \ \ \ \longrightarrow \ \ \ \hat{\bm{W}}_{S}^{-1} = \begin{pmatrix}
        2/9 & 0 & -1/18 \\ 0 &0 & 0 \\ -1/18 & 0 & 5/36
    \end{pmatrix}.
\end{align*}
For $S=\emptyset$, both $\hat{\boldsymbol{W}}_{\emptyset}$ and $\hat{\boldsymbol{W}}_{\emptyset}^{-1}$ correspond to the $n \times n$ matrix of zeros $\boldsymbol{O}$.
For a block vector $\boldsymbol{v}\sz{dn} \in \bbR^{dn}$, the block subvector of $\boldsymbol{v}\sz{dn}$ induced by a set $S \subseteq [n]$ is denoted as $\boldsymbol{v}\sz{dk}_{[S]} \in \bbR^{dk}$ for $k=|S|$, and $\hat{\boldsymbol{v}}\sz{dn}_{[S]} \in \bbR^{dn}$ represents the $dn$-dimensional vector that fills $\boldsymbol{0}\sz{d} \in \bbR^d$ into $\boldsymbol{v}\sz{dk}_{[S]}$ for $i \notin S$. For instance, let $\bm{v} = [\overbrace{\begin{array}{cc}
        1 & 2
    \end{array}}^{\bm{v}_{[1]}^\top} \ \ \overbrace{\begin{array}{cc}
        3 & 4
    \end{array}}^{\bm{v}_{[2]}^\top} \ \ \overbrace{\begin{array}{cc}
        5 & 6
    \end{array}}^{\bm{v}_{[3]}^\top}]^\top$.
Then, for $S = \{1,3\}$, $\hat{\bm{v}}_{[S]}$ is given by
\begin{align*}
    \bm{v}_{[S]} = \begin{bmatrix}
        1 & 2 & 5 & 6
    \end{bmatrix}^\top \ \ \ \longrightarrow \ \ \ \hat{\bm{v}}_{[S]} = \begin{bmatrix}
        1 & 2 & 0 & 0 & 5 & 6
    \end{bmatrix}^\top.
\end{align*}
Notations for a block matrix $\boldsymbol{W}\sz{dn} \in \bbR^{dn \times dn}$ composed of $n^2$ blocks of $d\times d$-dimensional matrices are defined in the same manner. We denote the $(i,j)$-th block of $\boldsymbol{W}\sz{dn}$ as $\boldsymbol{W}\sz{d}_{[ij]} \in \bbR^{d \times d}$ for $i,j \in [n]$, the block principal submatrix induced by a set $S \subseteq [n]$ with $k=|S|$ as $\boldsymbol{W}\sz{dk}_{[S]} \in \bbR^{dk\times dk}$, and the $dn \times dn$ matrix that pads $\boldsymbol{O}\sz{d}$ to $\boldsymbol{W}\sz{dk}_{[S]}$ for  the $(i,j)$-th block when either $i \notin S$ or $j \notin S$ as $\hat{\boldsymbol{W}}\sz{dn}_{[S]} \in \bbR^{dn \times dn}$. The notation $\hat{\boldsymbol{W}}_{[S]}^{-1} \in \bbR^{n \times n}$ is essentially the same as $\hat{\bm{W}}_{S}^{-1}$ of a factorizable matrix, where the block principal submatrix $\boldsymbol{W}_{[S]}$ is taken first, compute its inverse, and then apply the hat operation.
We use $\boldsymbol{U}\circ \boldsymbol{V}$ to denote the Hadamard product of the two matrices $\boldsymbol{U} \in \bbR^{n\times n}$ and $\boldsymbol{V}  \in \bbR^{n\times n}$, where the $(i,j)$-th entry is $\left(\boldsymbol{U}\circ \boldsymbol{V} \right)_{ij} = U_{ij} V_{ij}$ for $i,j \in [n]$. For two block symmetric matrices $\boldsymbol{U}\sz{dn} \in \bbR^{dn \times dn}$ and $\boldsymbol{V}\sz{dn} \in \bbR^{dn \times dn}$ constructed with $n^2$ blocks of the size $d \times d$, we use the notation $\boldsymbol{U}\sz{dn} \bullet \boldsymbol{V}\sz{dn}$ to indicate a $dn \times dn$-dimensional block symmetric matrix of which the $(i,j)$-th block is $\left(\boldsymbol{U}\sz{dn} \bullet \boldsymbol{V}\sz{dn} \right)_{[ij]} = \boldsymbol{U}\sz{d}_{[ij]} \boldsymbol{V}\sz{d}_{[ij]}^\top$ for $i \leq j$. 

\section{Preliminaries} \label{sec:preliminary}

In this section, we cover the preliminary results necessary for the development of the paper.
First, observe that the state variables $\bm{s}_i$, $i \in [n]$, in problem \eqref{MIQP-ddim},  can be projected out using the initial state condition \eqref{MIQP-ddim_init} and linear dynamics constraints \eqref{MIQP-ddim_transition}. The resulting optimization problem is of the form 
\begin{equation}
    \begin{aligned}
        \min \ & \boldsymbol{x}\sz{dn}^\top \boldsymbol{Q}\sz{dn} \boldsymbol{x}\sz{dn} + \boldsymbol{a}^\top \boldsymbol{x}\sz{dn} + \boldsymbol{c}^\top \boldsymbol{z} \\ 
        \text{s.t. } \ & \boldsymbol{x}_{[i]}^{} (1 - z_i) = \boldsymbol{0},\; z_i\in \{0,1\}, \qquad  i \in [n] \\ 
        & (\boldsymbol{x}\sz{dn},\bm{z})\in \tilde{C} \subseteq \mathbb{R}^{dn}\times\R^n, 
    \end{aligned} \label{MIQP-block}%
\end{equation}
where $\Tilde{C} = \proj_{(\bm{x}, \bm{z})} (C)$, 
$\bm{a} \in \R^{dn}$ and $\boldsymbol{Q}\sz{dn} \in \mathbb{R}^{dn \times dn}$ are appropriately defined. In Appendix \ref{Appendix:motivating_ex_singleton} ($d=1$) and Appendix \ref{Appendix:motivating_ex_block} ($d >1$), we derive the corresponding $\bm{a}$ and $\bm{Q}\sz{dn}$ and show that $\bm{Q}\sz{dn}$  is positive definite (\emph{block-}) \emph{factorizable matrix}, see Definition~\ref{def:blockSep}. 

\begin{definition}[Block-factorizable matrix]\label{def:blockSep}
A symmetric matrix $\bm{Q}\sz{dn}\in \R^{dn\times dn}$ is block-factorizable if there exists matrices $\left\{\boldsymbol{U}_i \in \mathbb{R}^{d\times d} \right\}_{i \in [n]}$ and $\left\{\boldsymbol{V}_i \in \mathbb{R}^{d\times d} \right\}_{i\in [n]}$ such that the $(i,j)$-th block of $\bm{Q}\sz{dn}$ is given by $\bm{Q_{[ij]}\sz{dn}}=\bm{U_iV_j^\top}$ for $i\leq j$, that is, 
\begin{align*}
\def\arraystretch{1.1}
    \boldsymbol{Q}\sz{dn} = \overbrace{\begin{bmatrix}
        \boldsymbol{U}_1 & \boldsymbol{U}_1 & \boldsymbol{U}_1 & \cdots & \boldsymbol{U}_1 \\
        \boldsymbol{U}_1 & \boldsymbol{U}_2 & \boldsymbol{U}_2 & \cdots & \boldsymbol{U}_2 \\
        \boldsymbol{U}_1 & \boldsymbol{U}_2 & \boldsymbol{U}_3 & \cdots & \boldsymbol{U}_3 \\
        \vdots & \vdots & \vdots & \ddots & \vdots \\
        \boldsymbol{U}_1 & \boldsymbol{U}_2 & \boldsymbol{U}_3 & \cdots & \boldsymbol{U}_n
    \end{bmatrix}}^{\boldsymbol{U}\sz{dn}} \bullet \overbrace{\begin{bmatrix}
        \boldsymbol{V}_1 & \boldsymbol{V}_2 & \boldsymbol{V}_3 & \cdots & \boldsymbol{V}_n \\
        \boldsymbol{V}_2 & \boldsymbol{V}_2 & \boldsymbol{V}_3 & \cdots & \boldsymbol{V}_n \\
        \boldsymbol{V}_3 & \boldsymbol{V}_3 & \boldsymbol{V}_3 & \cdots & \boldsymbol{V}_n \\
        \vdots & \vdots & \vdots & \ddots & \vdots \\
        \boldsymbol{V}_n & \boldsymbol{V}_n & \boldsymbol{V}_n & \cdots & \boldsymbol{V}_n
    \end{bmatrix}}^{\boldsymbol{V}\sz{dn}} := \begin{bmatrix}
        \boldsymbol{U}_1 \boldsymbol{V}_1^\top & \boldsymbol{U}_1 \boldsymbol{V}_2^\top & \boldsymbol{U}_1 \boldsymbol{V}_3^\top & \cdots & \boldsymbol{U}_1 \boldsymbol{V}_n^\top \\
        \boldsymbol{V}_2 \boldsymbol{U}_1^\top & \boldsymbol{U}_2 \boldsymbol{V}_2^\top & \boldsymbol{U}_2 \boldsymbol{V}_3^\top & \cdots & \boldsymbol{U}_2 \boldsymbol{V}_n^\top \\
        \boldsymbol{V}_3 \boldsymbol{U}_1^\top & \boldsymbol{V}_3 \boldsymbol{U}_2^\top & \boldsymbol{U}_3 \boldsymbol{V}_3^\top & \cdots & \boldsymbol{U}_3 \boldsymbol{V}_n^\top \\
        \vdots & \vdots & \vdots & \ddots & \vdots \\
        \boldsymbol{V}_n \boldsymbol{U}_1^\top & \boldsymbol{V}_n \boldsymbol{U}_2^\top & \boldsymbol{V}_n \boldsymbol{U}_3^\top & \cdots & \boldsymbol{U}_n \boldsymbol{V}_n^\top
    \end{bmatrix}. \hfill \blacksquare
\end{align*}
\end{definition}

A noteworthy special class of block-factorizable matrices arises when $d=1$. In that case, given $\{u_i\}_{i \in [n]}$ and $\{v_i\}_{i \in [n]}$, the $(i,j)$-th entry of the symmetric matrix $\bm{Q}$ is given as $Q_{ij}=u_iv_j$ for $i\leq j$, and $\bm{Q}$ can  be expressed as a Hadamard product of two matrices as
\begin{align}
    \boldsymbol{Q} = \overbrace{\begin{bmatrix}
        u_1 & u_1 & u_1 & \cdots & u_1 \\
        u_1 & u_2 & u_2 & \cdots & u_2 \\
        u_1 & u_2 & u_3 & \cdots & u_3 \\
        \vdots & \vdots & \vdots & \ddots & \vdots \\
        u_1 & u_2 & u_3 & \cdots & u_n
    \end{bmatrix}}^{\boldsymbol{U}} \circ \overbrace{\begin{bmatrix}
        v_1 & v_2 & v_3 & \cdots & v_n \\
        v_2 & v_2 & v_3 & \cdots & v_n \\
        v_3 & v_3 & v_3 & \cdots & v_n \\
        \vdots & \vdots & \vdots & \ddots & \vdots \\
        v_n & v_n & v_n & \cdots & v_n
    \end{bmatrix}}^{\boldsymbol{V}} = \begin{bmatrix}
        u_1 v_1 & u_1 v_2 & u_1 v_3 & \cdots & u_1 v_n \\
        u_1 v_2 & u_2 v_2 & u_2 v_3 & \cdots & u_2 v_n \\
        u_1 v_3 & u_2 v_3 & u_3 v_3 & \cdots & u_3 v_n \\
        \vdots & \vdots & \vdots & \ddots & \vdots \\
        u_1 v_n & u_2 v_n & u_3 v_n & \cdots & u_n v_n
    \end{bmatrix}. \label{two_seq_mat}
\end{align}
Matrices of the form \eqref{two_seq_mat} are referred to as \textit{matrices "factorisables"} (or \textit{factorizable matrices}) \cite{baranger1971breve} (which inspired the "block-factorizable" name we use for the multi-dimensional generalization), \textit{separable matrices} \cite{rozsa1987inverse}, or \textit{single-pair matrices} \cite{bossu2024tridiagonal}.
The distinguishing feature of factorizable matrices is that they correspond precisely to inverses of symmetric tridiagonal matrices\cite{baranger1971breve}. Conversely, nonsingular block-factorizable matrices have block-tridiagonal inverses \cite{meurant1992review}. 


\noindent To construct relaxations for \eqref{MIQP-ddim} and, equivalently, \eqref{MIQP-block}, we study the convex hull of sets
\begin{align*}
    X_{\boldsymbol{Q}} &:= \left\{ (\boldsymbol{x},\boldsymbol{z},\tau) \in \bbR^{n} \times \{0,1\}^n \times \mathbb{R}_+ : \ \tau \geq \boldsymbol{x}^\top \boldsymbol{Q} \boldsymbol{x}, \ x_i (1-z_i) = 0, \  i\in [n] \right\}\text{, and }\\
    X^B_{\boldsymbol{Q}} &:= \left\{ (\boldsymbol{x},\boldsymbol{z},\tau) \in \bbR^{dn} \times \{0,1\}^n \times \mathbb{R}_+ : \ \tau \geq \boldsymbol{x}\sz{dn}^\top \boldsymbol{Q} \boldsymbol{x}\sz{dn}, \ \boldsymbol{x}_{[i]}\sz{dn} (1- z_i) = \boldsymbol{0}\sz{d}, \  i \in [n] \right\}
\end{align*}
where the matrix $\bm{Q} \in \R^{n \times n}$ is factorizable in $X_{\bm{Q}}$, and $\bm{Q} \in \R^{dn \times dn}$ is block-factorizable in $X_{\bm{Q}}^B$.

Our approach to constructing the convex hulls relies on a recent result from \cite{wei2024convex}.
Proposition~\ref{prop:hull1d} states that the convex hull of $X_{\bm{Q}}$ can be expressed by a single positive definite constraint (explicitly given) and a polyhedral set associated with describing inverses of principal submatrices of matrix $\bm{Q}$.
\begin{proposition}[Wei et al \cite{wei2024convex}]\label{prop:hull1d} 
For a positive definite $\bm{T}$ and  $Z \subseteq \{0,1\}^n$, the closed convex hull of 
\begin{align*}
    \bar X_{\bm{T}} := \left\{ (\boldsymbol{x},\boldsymbol{z},\tau) \in \bbR^{n} \times Z \times \mathbb{R}_+ : \ \tau \geq \boldsymbol{x}^\top \boldsymbol{T} \boldsymbol{x}, \ x_i (1-z_i) = 0, \  i\in [n] \right\},
\end{align*}
is given by 
\begin{align*}
\cl \conv \left( \Bar{X}_{\bm{T}}\right)=\left\{(\bm{x},\bm{z},\tau)\in \R^{2n+1}:\exists \bm{W}\in \R^{n\times n}\text{ such that }\begin{bmatrix}
            \boldsymbol{W} & \boldsymbol{x}\\
            \boldsymbol{x}^\top & \tau
        \end{bmatrix} \succeq \bm{0},\; (\bm{z}, \bm{W})\in P_{\bm{T}}\right\},
\end{align*}
where $ P_{\boldsymbol{T}} := \conv \left( \left\{ \left(\boldsymbol{e}_{S}, \hat{\boldsymbol{T}}_{S}^{-1}\right) : \bm{e}_S \in Z \right\} \right)$.
\end{proposition}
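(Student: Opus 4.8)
The plan is to express $\bar X_{\bm T}$ as a finite union of closed convex sets indexed by the support of $\bm z$, give each such set a semidefinite description built from $\hat{\bm T}_S^{-1}$, and then pass to the convex hull using additivity of positive semidefiniteness. Throughout we may assume $Z\neq\emptyset$, the empty case being trivial. \textbf{Step 1 (decomposition and per-piece SDP description).} If $\bm z=\bm e_S\in Z$, the indicator constraints force $x_i=0$ for $i\notin S$, and then $\bm x^\top\bm T\bm x=\bm x_S^\top\bm T_S\bm x_S$; hence $\bar X_{\bm T}=\bigcup_{S:\,\bm e_S\in Z}Y_S$, where $Y_S:=\{(\bm x,\bm e_S,\tau): x_i=0\ \forall i\notin S,\ \tau\ge\bm x^\top\bm T\bm x\}$ is closed and convex since $\bm T_S\succ\bm 0$. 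I would first show $Y_S=\bigl\{(\bm x,\bm e_S,\tau):\bigl[\begin{smallmatrix}\hat{\bm T}_S^{-1}&\bm x\\ \bm x^\top&\tau\end{smallmatrix}\bigr]\succeq\bm 0\bigr\}$: in this block matrix the rows indexed by $i\notin S$ vanish except for the entry $x_i$, so a $2\times 2$ principal minor forces $x_i=0$; restricting to the rows and columns in $S\cup\{n+1\}$ and taking the Schur complement of the $\succ\bm 0$ block $\bm T_S^{-1}$ gives the equivalence with $\tau\ge\bm x_S^\top\bm T_S\bm x_S$. Note $\hat{\bm T}_S^{-1}\succeq\bm 0$ because $\bm T_S\succ\bm 0$.

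\textbf{Step 2 (forward inclusion).} Any point of $\conv(\bar X_{\bm T})$ has the form $\sum_S\lambda_S(\bm x^S,\bm e_S,\tau^S)$ with $\lambda_S\ge 0$, $\sum_S\lambda_S=1$, and $(\bm x^S,\bm e_S,\tau^S)\in Y_S$. Setting $\bm W:=\sum_S\lambda_S\hat{\bm T}_S^{-1}$ gives $(\bm z,\bm W)=\sum_S\lambda_S(\bm e_S,\hat{\bm T}_S^{-1})\in P_{\bm T}$, and multiplying each PSD matrix from Step 1 by $\lambda_S$ and summing yields $\bigl[\begin{smallmatrix}\bm W&\bm x\\ \bm x^\top&\tau\end{smallmatrix}\bigr]\succeq\bm 0$, so the point lies in the claimed set. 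Moreover that set is closed: it is the projection onto $(\bm x,\bm z,\tau)$ of the closed set $\{(\bm x,\bm z,\tau,\bm W):(\bm z,\bm W)\in P_{\bm T},\ \bigl[\begin{smallmatrix}\bm W&\bm x\\ \bm x^\top&\tau\end{smallmatrix}\bigr]\succeq\bm 0\}$, and since $P_{\bm T}$ is a polytope the $\bm W$-coordinate of any feasible point lies in a fixed compact set, so projecting out $\bm W$ preserves closedness. Hence $\cl\conv(\bar X_{\bm T})$ is contained in the claimed set.

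\textbf{Step 3 (reverse inclusion, the crux).} Take $(\bm x,\bm z,\tau)$ in the claimed set with a witness $\bm W$, and write $(\bm z,\bm W)=\sum_S\lambda_S(\bm e_S,\hat{\bm T}_S^{-1})$ with $\lambda_S>0$, $\sum_S\lambda_S=1$, $\bm e_S\in Z$. Since $\bm W\succeq\bm 0$ and $\bigl[\begin{smallmatrix}\bm W&\bm x\\ \bm x^\top&\tau\end{smallmatrix}\bigr]\succeq\bm 0$, the generalized Schur complement gives $\bm x\in\mathrm{range}(\bm W)$, say $\bm x=\bm W\bm\nu$, and $\tau\ge\bm x^\top\bm W^{+}\bm x=\bm\nu^\top\bm W\bm\nu$ (using $\bm W\bm W^{+}\bm W=\bm W$). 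Define $\bm x^S:=\hat{\bm T}_S^{-1}\bm\nu$ and $\tau^S:=\bm\nu^\top\hat{\bm T}_S^{-1}\bm\nu$ for each $S$ with $\lambda_S>0$; then $x^S_i=0$ for $i\notin S$ and a short computation gives $\tau^S=(\bm x^S)^\top\bm T\bm x^S$, so $(\bm x^S,\bm e_S,\tau^S)\in Y_S$. Since $\sum_S\lambda_S\bm x^S=\bm W\bm\nu=\bm x$, $\sum_S\lambda_S\bm e_S=\bm z$, and $\sum_S\lambda_S\tau^S=\bm\nu^\top\bm W\bm\nu\le\tau$, adding the nonnegative slack $\tau-\bm\nu^\top\bm W\bm\nu$ to $\tau^{S_0}$ for one fixed $S_0$ (legitimate because $Y_{S_0}$ is an epigraph) exhibits $(\bm x,\bm z,\tau)$ as an explicit convex combination of points of $\bar X_{\bm T}$.

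\textbf{Main obstacle.} The only genuinely delicate step is Step 3: from a single coupled matrix $\bm W\in P_{\bm T}$ one must manufacture a consistent family $\{(\bm x^S,\tau^S)\}$ that is simultaneously feasible for each piece $Y_S$ and whose $\lambda_S$-weighted average reproduces $(\bm x,\tau)$. The device that makes this work is to read $\bm x$ off the range of $\bm W$ as $\bm W\bm\nu$ and push the \emph{same} $\bm\nu$ through every $\hat{\bm T}_S^{-1}$; checking that this respects the supports and that the induced $\tau^S$ land exactly on the quadratic epigraphs is where positive definiteness of $\bm T$ and the pseudoinverse identity are used. Combining the two inclusions with the closedness established in Step 2 yields the stated description of $\cl\conv(\bar X_{\bm T})$.
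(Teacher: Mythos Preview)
The paper does not prove this proposition; it is quoted as a known result from Wei et al.\ \cite{wei2024convex} and used as a black box. So there is no ``paper's proof'' to compare against, only the question of whether your argument stands on its own.

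Your argument is correct. The decomposition $\bar X_{\bm T}=\bigcup_S Y_S$ and the SDP description of each $Y_S$ via $\hat{\bm T}_S^{-1}$ are standard and you verify them cleanly. The forward inclusion and closedness in Step~2 are fine; the compactness of $P_{\bm T}$ (a polytope, since $Z$ is finite) does give properness of the projection in the $\bm W$-coordinate, hence closedness of the image. The heart of the matter is Step~3, and your trick of writing $\bm x=\bm W\bm\nu$ and pushing the \emph{same} multiplier $\bm\nu$ through each $\hat{\bm T}_S^{-1}$ works: the key identity $\hat{\bm T}_S^{-1}\bm T\hat{\bm T}_S^{-1}=\hat{\bm T}_S^{-1}$ (immediate from the block structure, since $\bm T_S^{-1}\bm T_S\bm T_S^{-1}=\bm T_S^{-1}$) gives $\tau^S=(\bm x^S)^\top\bm T\bm x^S$ exactly, and the averaging identities $\sum_S\lambda_S\bm x^S=\bm W\bm\nu=\bm x$ and $\sum_S\lambda_S\tau^S=\bm\nu^\top\bm W\bm\nu\le\tau$ then close the loop. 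The slack adjustment is legitimate because each $Y_S$ is upward-closed in $\tau$. In short, you have supplied a self-contained proof where the paper simply cites the result.
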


\noindent For $X_{\bm{Q}}$, we have $Z = \{0,1\}^n$; therefore, $P_{\bm{Q}} = \conv \left( \left\{ \left(\boldsymbol{e}_{S}, \hat{\boldsymbol{Q}}_{S}^{-1}\right)\right\}_{S \subseteq [n]} \right)$.

\begin{example} \label{eg:1d-ext}  
For the matrix 
$\boldsymbol{Q} = \begin{bmatrix}
            5 & 4 & 2 \\
            4 & 8 & 4 \\
            2 & 4 & 8 
        \end{bmatrix}$, we have 
\begin{align*}
    \def\arraystretch{1.1}
    \bm{\hat Q_\emptyset^{-1}}= \left(\begin{array}{ccc}
         0&0&0\\
    0&0&0\\
    0&0&0
    \end{array} \right),\; \bm{\hat Q_{\{1\}}^{-1}}= \left(\begin{array}{ccc}
         \frac{1}{5}&0&0\\
    0&0&0\\
    0&0&0
    \end{array}\right),\;\bm{\hat Q_{\{1,3\}}^{-1}}= \left(\begin{array}{rrr}
        \frac{2}{9} &0&-\frac{1}{18}\\
        0&0&0\\
        -\frac{1}{18}&0&\frac{5}{36}
    \end{array} \right),\;\bm{\hat Q_{\{1,2,3\}}^{-1}}=\left(\begin{array}{rrr}
    \frac{1}{3} & -\frac{1}{6} & 0 \\
    -\frac{1}{6} & \frac{1}{4} & -\frac{1}{12}\\
    0 & -\frac{1}{12} & \frac{1}{6}
    \end{array} \right). 
\end{align*}
Moreover, the polyhedron $P_{\bm{Q}}$ associated with matrix $\boldsymbol{Q}$ is the convex hull of the following eight points $(\bm{z}, \bm{W})$; we omit lower triangular entries of $\bm{W}$ for brevity.
        \begin{center}
        \begin{tabular}{c c c c c c c c c}
        \hline       $z_1$&$z_2$&$z_3$&$W_{11}$&$W_{12}$&$W_{13}$&$W_{22}$&$W_{23}$&$W_{33}$\\
        \hline 
        0&0&0&0&0&0&0&0&0\\
        1&0&0&1/5&0&0&0&0&0\\
        0&1&0&0&0&0&1/8&0&0\\
        0&0&1&0&0&0&0&0&1/8\\
        1&1&0&1/3&-1/6&0&5/24&0&0\\
        1&0&1&2/9&0&-1/18&0&0&5/36\\
        0&1&1&0&0&0&1/6&-1/12&1/6\\
        1&1&1&1/3&-1/6&0&1/4&-1/12&1/6\\ \hline
        \end{tabular}
        \end{center}
Note that the matrix $\bm{Q}$ is a factorizable matrix defined by $\{u_i\}_{i \in [3]} = \{1,2,4\}$ and $\{v_i\}_{i\in [3]} = \{5,4,2\}$. 
\hfill $\blacksquare$
\end{example}

Proposition~\ref{prop:hulldd} extends Proposition~\ref{prop:hull1d} to the positive definite block matrix $\bm{T}$ case, allowing it to be applied to the set $X^B_{\bm{Q}\sz{dn}}$ for a block-factorizable positive definite matrix $\bm{Q}\sz{dn} \in \R^{dn \times dn}$.
\begin{proposition}\label{prop:hulldd}
For a positive definite block matrix $\bm{T}$, the closed convex hull of 
\begin{align*}
    \Bar{X}_{\bm{T}}^B := \left\{ (\boldsymbol{x},\boldsymbol{z},\tau) \in \bbR^{dn} \times \{0,1\}^n \times \mathbb{R}_+ : \ \tau \geq \boldsymbol{x}^\top \boldsymbol{T} \boldsymbol{x}, \ \bm{x}_{[i]} (1-z_i) = \bm{0}, \  i\in [n] \right\},
\end{align*}
is given by
\begin{align*} 
\cl \conv \left( \Bar{X}_{\bm{T}}^B\right)=\left\{(\bm{x},\bm{z},\tau)\in \R^{dn} \times \R^{n} \times \R :\exists \bm{W}\in \R^{dn\times dn}\text{ such that } \ \begin{bmatrix}
            \boldsymbol{W} & \boldsymbol{x}\\
            \boldsymbol{x}^\top & \tau
        \end{bmatrix} \succeq \bm{0},\; (\bm{z}, \bm{W})\in P_{\bm{T}}^B\right\},
\end{align*} 
for $ P^B_{\boldsymbol{T}\sz{dn}} := \conv \left( \left\{ \left(\boldsymbol{e}_{S}, \hat{\boldsymbol{T}}\sz{dn}_{[S]}^{-1}\right) \right\}_{S \subseteq [n]} \right).$
\end{proposition}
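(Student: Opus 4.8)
The plan is to reduce Proposition~\ref{prop:hulldd} to Proposition~\ref{prop:hull1d} rather than to repeat its proof from scratch. The key observation is that the block case with a $dn\times dn$ matrix $\bm{T}$ and indicator variables $z_i$ that simultaneously switch on or off an entire $d$-dimensional block $\bm{x}_{[i]}$ is structurally the same as the scalar case with $N=dn$ variables, but where the indicator variables are \emph{repeated}: each $z_i$ plays the role of $d$ copies $z_{(i-1)d+1}=\dots=z_{id}$. Concretely, I would introduce the lifted binary set
\begin{align*}
Z := \left\{ \bm{y}\in\{0,1\}^{dn} : y_{(i-1)d+k}=y_{(i-1)d+\ell} \text{ for all } i\in[n],\, k,\ell\in[d] \right\},
\end{align*}
so that the feasible $\bm{y}$ are exactly the indicators of sets of the form $\bigcup_{i\in S}\{(i-1)d+1,\dots,id\}$ for $S\subseteq[n]$. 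With this identification, $\bar X_{\bm{T}}^B$ is (up to the trivial relabeling $\bm{x}\sz{dn}\in\R^{dn}$) precisely the scalar set $\bar X_{\bm{T}}$ of Proposition~\ref{prop:hull1d} with ambient binary set $Z$ and with $n$ replaced by $dn$: the constraint $\bm{x}_{[i]}(1-z_i)=\bm{0}$ becomes $x_j(1-y_j)=0$ for each $j$ in block $i$, which is exactly the complementarity constraint of $\bar X_{\bm T}$ once we restrict $\bm y$ to $Z$.

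Given that reduction, Proposition~\ref{prop:hull1d} applies verbatim (it is stated for an arbitrary $Z\subseteq\{0,1\}^n$) and yields
\begin{align*}
\cl\conv\big(\bar X_{\bm T}^B\big)=\left\{(\bm x,\bm y,\tau):\exists\,\bm W\ \text{s.t.}\ \begin{bmatrix}\bm W & \bm x\\ \bm x^\top & \tau\end{bmatrix}\succeq\bm 0,\ (\bm y,\bm W)\in \conv\big(\{(\bm e_S,\hat{\bm T}_S^{-1}): \bm e_S\in Z\}\big)\right\}.
\end{align*}
Here the index sets $S$ that appear are exactly the block-unions described above, so $\bm e_S$ (as a vector in $\{0,1\}^{dn}$) is the block-repetition of $\bm e_{S'}$ for some $S'\subseteq[n]$, and the corresponding principal submatrix $\bm T_S$ is precisely the block principal submatrix $\bm T\sz{dn}_{[S']}$; hence $\hat{\bm T}_S^{-1}=\hat{\bm T}\sz{dn}_{[S']}{}^{-1}$ in the block notation. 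The last step is a change of coordinates on the $\bm z$-variables: since every feasible $\bm y$ is constant on each block, projecting $\bm y\in\R^{dn}$ down to its one representative coordinate per block $\bm z\in\R^n$ is an affine bijection between $\{\bm e_S:\bm e_S\in Z\}$ and $\{\bm e_{S'}:S'\subseteq[n]\}$, and affine maps commute with $\conv$ and with $\cl\conv$. Applying this bijection turns the description above into exactly the claimed $P^B_{\bm T\sz{dn}}=\conv(\{(\bm e_{S'},\hat{\bm T}\sz{dn}_{[S']}{}^{-1})\}_{S'\subseteq[n]})$, completing the proof.

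The only genuine obstacle is making the reduction airtight on the two points where "morally the same" needs to become "literally equal": (i) verifying that the complementarity constraints really do coincide under the block-to-scalar relabeling, i.e. that $\bm x_{[i]}(1-z_i)=\bm 0$ is the conjunction over $j$ in block $i$ of $x_j(1-y_j)=0$ when $\bm y\in Z$ — this is immediate but must be said; and (ii) checking that the projection $\bm y\mapsto\bm z$ does not lose information at the level of convex hulls, which is where one has to be slightly careful: a priori $\cl\conv$ of the projection could differ from the projection of $\cl\conv$, but because the projection is a \emph{linear} map and the set is bounded in the $\bm y$-coordinates (indeed $\bm y\in[0,1]^{dn}$ after taking the convex hull), the two operations commute. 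I would also remark that positive definiteness of $\bm T$ (hence of every principal submatrix $\bm T_S$) is inherited automatically, so the hypothesis needed to invoke Proposition~\ref{prop:hull1d} is met. An alternative, if one prefers a self-contained argument, is to re-run the proof of Proposition~\ref{prop:hull1d} from \cite{wei2024convex} with blocks throughout — the perspective-function and $\hat{\bm T}_S^{-1}$ computations go through identically since they only use that the complementarity forces $\bm x$ to be supported on $\bigcup_{i\in S}$(block $i$) — but the reduction argument is shorter and less error-prone.
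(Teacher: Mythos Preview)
Your proposal is correct and follows essentially the same approach as the paper: both introduce $d$ copies $\tilde z_{ij}=z_i$ of each indicator so that the block complementarity $\bm{x}_{[i]}(1-z_i)=\bm 0$ becomes $dn$ scalar complementarities with the restricted binary set $Z=\{\tilde{\bm z}\in\{0,1\}^{dn}:\tilde z_{ij}=\tilde z_{ij'}\}$, apply Proposition~\ref{prop:hull1d} with this $Z$, and then project back to $\bm z\in\R^n$. Your write-up is actually slightly more careful than the paper's about justifying why the projection commutes with $\cl\conv$; the paper simply asserts the equivalence $(\tilde{\bm z},\bm W)\in\tilde P_{\bm T}\Leftrightarrow(\bm z,\bm W)\in P^B_{\bm T}$ without comment.
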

\begin{proof}
    For $i \in [n]$, make $d$ copies of $z_i$, $\Tilde{z}_{ij} = z_i$ for $j \in [d]$, where each $\Tilde{z}_{ij}$ performs as the indicator of the $j$-th entry of $\bm{x}_{[i]}$, which we denote $x_{ij}$. Then, for $i \in [n]$, it holds
    \begin{align*}
        \bm{x}_{[i]} (1- z_i) = \bm{0} \ \ \Longleftrightarrow \ \ x_{ij} \left(1-\Tilde{z}_{ij}\right) = 0, \
         \Tilde{z}_{ij} = z_i, \ \ \forall j \in [d].
    \end{align*}
    Therefore, we have an extended representation of $X^B$ given by
    \begin{align*}
        X^B = \left\{(\bm{x}, \Tilde{\bm{z}},\tau)\in \R^{dn} \times Z \times \R : \ \tau \geq \bm{x}^\top \bm{T} \bm{w}, \ x_{ij}\left( 1- \Tilde{z}_{ij} \right) = 0, \ i \in [n], \ j \in [d] \right\},
    \end{align*}
    where 
    \begin{align*}
        Z = \left\{ \Tilde{z} \in \{0,1\}^{dn} : \ \Tilde{z}_{ij} = \Tilde{z}_{i{j'}}, \ i \in [n], \ j, j' \in [d]\right\}.
    \end{align*}
    Applying Proposition~\ref{prop:hull1d}, $X^B_{\bm{T}\sz{dn}}$ becomes 
    \begin{align}
        \cl \conv \left( X^B\right)=\left\{(\bm{x}, \Tilde{\bm{z}},\tau)\in \R^{dn} \times \R^{dn} \times \R :\exists \bm{W}\in \R^{dn\times dn} \ \text{ such that } \ \begin{bmatrix}
                \boldsymbol{W} & \boldsymbol{x}\\
                \boldsymbol{x}^\top & \tau
            \end{bmatrix} \succeq \bm{0},\; (\Tilde{\bm{z}}, \bm{W})\in \Tilde{P}_{\bm{T}\sz{dn}}\right\}, \label{convX_block_ext}
    \end{align}
    for $\Tilde{P}_{\bm{T}\sz{dn}}:=  \conv \left( \left\{ \left(\boldsymbol{e}_{\Tilde{S}}, \hat{\boldsymbol{T}}_{\Tilde{S}}^{-1}\right) : \bm{e}_{\Tilde{S}} \in Z \right\} \right)$.
    Employing $z_i = \Tilde{z}_{ij}$, $\forall j \in [d]$, it holds
    \begin{align*}
        (\Tilde{\bm{z}}, \bm{W})\in \Tilde{P}_{\bm{T}} \ \ \Longleftrightarrow \ \ (\bm{z}, \bm{W})\in P^B_{\bm{T}}.
    \end{align*}
    As a result, the representation of $\cl \conv \left( X^B\right)$ in $\R^n \times \R^{dn \times dn}$ in the statement of the proposition is obtained.
\end{proof}


\section{Convexification} \label{sec:convexification}
The main goal of this section is to construct 
the closed convex hulls of $X_{\boldsymbol{Q}}$ and $X^B_{\boldsymbol{Q}}$. 
Based on Propositions~\ref{prop:hull1d} and \ref{prop:hulldd}, 
it suffices to derive a closed-form representation of polyhedra $P_{\boldsymbol{Q}}$ and $P_{\boldsymbol{Q}}^B$, describing the inverses of submatrices of factorizable and block-factorizable matrices, respectively.
Note that since $X_{\boldsymbol{Q}}$ is the special case of $X^B_{\boldsymbol{Q}}$ with $d=1$, in principle, it suffices to focus on the block-factorizable case. However, notations associated with the block-factorizable case are more complicated and less intuitive. Therefore, for clarity, we first present the derivation of the results for the factorable case  in \S\ref{subsec:Tridiag_Inv-1dim} and \S\ref{subsec:convexHull}. Then, in \S\ref{subsec:Tridiag_Inv-multidim}, we discuss the generalization of the results to the block-factorable case, where most of the results directly follow from the factorizable case.

\subsection{Inverse of factorizable matrix} \label{subsec:Tridiag_Inv-1dim}
Consider a factorizable matrix $\boldsymbol{Q}$ defined as a Hadamard product of two matrices $\boldsymbol{U}$ and $\boldsymbol{V}$
\begin{align}
    \boldsymbol{Q} = \overbrace{\begin{bmatrix}
        u_1 & u_1 & u_1 & \cdots & u_1 \\
        u_1 & u_2 & u_2 & \cdots & u_2 \\
        u_1 & u_2 & u_3 & \cdots & u_3 \\
        \vdots & \vdots & \vdots & \ddots & \vdots \\
        u_1 & u_2 & u_3 & \cdots & u_n
    \end{bmatrix}}^{\boldsymbol{U}} \circ \overbrace{\begin{bmatrix}
        v_1 & v_2 & v_3 & \cdots & v_n \\
        v_2 & v_2 & v_3 & \cdots & v_n \\
        v_3 & v_3 & v_3 & \cdots & v_n \\
        \vdots & \vdots & \vdots & \ddots & \vdots \\
        v_n & v_n & v_n & \cdots & v_n
    \end{bmatrix}}^{\boldsymbol{V}}. \label{two_seq_mat_repeat}
\end{align}


Note that in this paper, we are interested in positive definite matrices $\bm{Q}\succ \bm{0}$. Naturally, positive definiteness requires diagonal elements to be positive and the determinant of $2\times 2$ principal submatrices to be positive, thus we can make the following assumptions without loss of generality.
\begin{assumption}\label{assump:factorable}
It holds that $u_iv_i>0$ for all $i\in [n]$ and that $u_iv_j(u_jv_i-u_iv_j)>0$ for all $i<j$.
\end{assumption}
In fact, in Proposition~\ref{prop:PSD}, we show that $\bm{Q}\succ \bm{0}$ if and only if Assumption~\ref{assump:factorable} holds. Note that since $u_jv_j>0$, the second condition is equivalent to 
\begin{align}
    \frac{u_i}{u_j} \cdot (u_jv_i-u_iv_j) >0. \label{assump1_cond2_v2}
\end{align}

In Proposition~\ref{prop:rank-1}, we represent the inverse of a factorizable matrix $\boldsymbol{Q}$ as a weighted sum of $n$ symmetric $2\times 2$ rank-one matrices, which directly follows from the recursive expression of $\bm{Q}^{-1}$ presented in \cite{meurant2024direct}. This $2\times 2$ rank-one representation is key to deriving the polynomially-sized convex hull descriptions in~\S\ref{subsec:convexHull}.
\begin{proposition}\label{prop:rank-1}
The inverse of a nonsingular factorizable matrix $\bm{Q}= \bm{U} \circ \bm{V}$ defined as \eqref{two_seq_mat_repeat} can be expressed as the sum of $n$ rank-one matrices 
\begin{align}
    \boldsymbol{Q}^{-1}
    = \sum_{i=1}^n \delta_i \boldsymbol{\gamma}_i \boldsymbol{\gamma}_i^\top, \label{Q_inv_mat}
\end{align}
where 
\begin{gather}
    \boldsymbol{\gamma}_i = \boldsymbol{e}_i - \theta_{i+1} \boldsymbol{e}_{i+1}, \quad i \in [n-1], \quad \text{ and } \quad \boldsymbol{\gamma}_n = \boldsymbol{e}_n, \nonumber\\
    \delta_i = \frac{u_{i+1}}{u_i (u_{i+1} v_i - u_i v_{i+1} )}, \ \ \theta_{i+1} = \frac{u_{i}}{u_{i+1}}, \quad  i \in [n-1], \quad \text{ and } \quad \delta_n = \frac{1}{u_n v_n} \cdot \label{pq_closed_form}
\end{gather}
\end{proposition}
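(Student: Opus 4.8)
The plan is to verify the claimed rank-one decomposition directly by computing $\bm{Q} \bm{R} \bm{Q}$, where $\bm{R} := \sum_{i=1}^n \delta_i \bm{\gamma}_i \bm{\gamma}_i^\top$ is the proposed inverse, and showing it equals $\bm{Q}$ --- or, more cleanly, showing $\bm{Q}\bm{R} = \bm{I}\sz{n}$. Since $\bm{\gamma}_i$ has support $\{i,i+1\}$ (and $\bm{\gamma}_n$ support $\{n\}$), the matrix $\bm{R}$ is tridiagonal, which is consistent with the remark in the excerpt that inverses of factorizable matrices are tridiagonal. Concretely, $R_{ii} = \delta_{i-1}\theta_i^2 + \delta_i$ (with $\delta_0 := 0$), $R_{i,i+1} = R_{i+1,i} = -\delta_i \theta_{i+1}$, and all other entries vanish. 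Substituting $\theta_{i+1} = u_i/u_{i+1}$ and the given $\delta_i$, one gets closed forms for these entries purely in terms of $\{u_i\},\{v_i\}$.

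The main computation is then to check that $(\bm{Q}\bm{R})_{k\ell} = \delta_{k\ell}$ (Kronecker delta) for all $k,\ell$. Because $\bm{R}$ is tridiagonal, $(\bm{Q}\bm{R})_{k\ell} = Q_{k,\ell-1}R_{\ell-1,\ell} + Q_{k\ell}R_{\ell\ell} + Q_{k,\ell+1}R_{\ell+1,\ell}$, so each entry is a three-term sum (fewer at the boundary). Using the explicit form $Q_{ij} = u_{\min(i,j)}v_{\max(i,j)}$, I would split into cases $k < \ell$, $k = \ell$, $k > \ell$ and in each case show the telescoping/cancellation. The cleanest route is actually to exploit the recursive structure: the paper notes \eqref{Q_inv_mat} "directly follows from the recursive expression of $\bm{Q}^{-1}$ presented in \cite{meurant2024direct}," so I would instead set up the induction on $n$. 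Writing $\bm{Q}\sz{(n)}$ for the $n\times n$ factorizable matrix, one has a block structure in which $\bm{Q}\sz{(n-1)}$ is the leading principal submatrix, and a rank-one update / Schur-complement argument relates $(\bm{Q}\sz{(n)})^{-1}$ to $(\bm{Q}\sz{(n-1)})^{-1}$. The new term contributed is exactly $\delta_{n-1}\bm{\gamma}_{n-1}\bm{\gamma}_{n-1}^\top$ replacing the old $\delta_{n-1}^{\mathrm{old}}\bm{e}_{n-1}\bm{e}_{n-1}^\top = \frac{1}{u_{n-1}v_{n-1}}\bm{e}_{n-1}\bm{e}_{n-1}^\top$ end-correction, plus the genuinely new $\delta_n \bm{e}_n\bm{e}_n^\top$; the Sherman--Morrison update then produces precisely the stated $\theta_n$ and $\delta_{n-1}$.

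I expect the main obstacle to be bookkeeping rather than conceptual: keeping track of how the "last" coefficient $\delta_{n-1}$ of the $(n-1)$-step decomposition must be \emph{revised} when passing to step $n$ (it changes from the boundary value $1/(u_{n-1}v_{n-1})$ to the interior value $u_n/(u_{n-1}(u_nv_{n-1}-u_{n-1}v_n))$), while all earlier coefficients $\delta_1,\dots,\delta_{n-2}$ and vectors $\bm{\gamma}_1,\dots,\bm{\gamma}_{n-2}$ stay fixed. Organizing the induction hypothesis so that this revision is transparent --- e.g. by proving the slightly stronger statement that $(\bm{Q}\sz{(n)})^{-1} = \sum_{i=1}^{n-1}\delta_i\bm{\gamma}_i\bm{\gamma}_i^\top + \frac{1}{u_nv_n}\bm{e}_n\bm{e}_n^\top$ with $\delta_i,\bm{\gamma}_i$ as in \eqref{pq_closed_form} --- makes both the base case $n=1$ ($(\bm{Q})^{-1} = 1/(u_1v_1)$) and the inductive step fall out cleanly. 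Nonsingularity of $\bm{Q}$ guarantees $u_{i+1}v_i - u_iv_{i+1}\neq 0$ for each $i$ (this is the determinant of the relevant $2\times 2$ "minor" up to sign, via the factorizable structure), so every $\delta_i$ is well-defined. Finally, one should note the decomposition is unique given the tridiagonal-inverse structure, but uniqueness is not needed --- direct verification $\bm{Q}\bm{R}=\bm{I}\sz{n}$ suffices and is arguably the shortest write-up, so I would present that as the primary proof and relegate the recursive viewpoint to a remark.
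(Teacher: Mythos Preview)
Your proposal is correct, and the two approaches differ mainly in what is taken as given. The paper's proof simply quotes the closed-form tridiagonal expression for $\bm{Q}^{-1}$ from Meurant~\cite{meurant2024direct} (with diagonal entries $\alpha_i$ and off-diagonals $-\beta_i$), checks the algebraic identities $\beta_i=\delta_{i-1}\theta_i$ and $\alpha_i=\delta_i+\delta_{i-1}\theta_i^2$, and observes that this is exactly the entrywise expansion of $\sum_i \delta_i\bm{\gamma}_i\bm{\gamma}_i^\top$. In other words, the paper outsources the ``$\bm{Q}\bm{R}=\bm{I}$'' computation to the literature and only verifies that two tridiagonal matrices coincide. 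Your route---either direct verification of $(\bm{Q}\bm{R})_{k\ell}=\delta_{k\ell}$ using the three-term recurrence, or the Schur-complement/Sherman--Morrison induction on $n$ with the revised last coefficient---is self-contained and does not rely on citing Meurant; this is a genuine advantage if one wants the argument to stand alone, and the induction you sketch is in fact how one would \emph{prove} Meurant's formula in the first place. Conversely, the paper's version is a two-line proof once the reference is granted. Your observation that nonsingularity forces $u_{i+1}v_i-u_iv_{i+1}\neq 0$ is correct (indeed $\det\bm{Q}=u_1v_n\prod_{i=1}^{n-1}(u_{i+1}v_i-u_iv_{i+1})$), though note the paper works throughout under the stronger positive-definiteness Assumption~\ref{assump:factorable}, which also guarantees $u_i\neq 0$ for all $i$ so that every $\delta_i$ and $\theta_{i+1}$ is well-defined.
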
 


\begin{proof}
A symmetric tridiagonal representation of $\boldsymbol{Q}^{-1}$ is given in Meurant \cite[Section~1.18]{meurant2024direct} as follows:
\begin{align*}
\boldsymbol{Q}^{-1} = \begin{bmatrix}
    \alpha_1 & -\beta_2 & & &\\
    -\beta_2 & \alpha_2 & -\beta_3 & & \\
    & \ddots & \ddots & \ddots & & \\
    & & -\beta_{n-1} & \alpha_{n-1} & -\beta_n \\
    & & & -\beta_n & \alpha_n
\end{bmatrix}
\end{align*}
where
\begin{equation}
    \begin{aligned}
        &\alpha_1 = \delta_1,\\
        &\beta_i=\frac{1}{u_iv_{i-1}-u_{i-1}v_i},\; \alpha_i = \delta_i + \beta_i \theta_i \qquad i \in [2,n].
    \end{aligned} \label{ab_def}
\end{equation}
Clearly, $\beta_i=\delta_{i-1} \theta_i$ for all $i\in [2,n]$, thus $\alpha_i=\delta_i+\delta_{i-1} \theta_i^2$. 
Thus, we find that $Q_{11}^{-1}=\delta_1$, $Q_{ii}^{-1}=\delta_{i-1} \theta_i^2+ \delta_i$ and $Q_{i,i+1}^{-1}=-\delta_{i} \theta_{i+1}$
which directly induces the representation \eqref{Q_inv_mat}.
\end{proof}


\begin{example} \label{eg:singleton_inv}
Consider a matrix $\boldsymbol{Q}$ given as a Hadamard product of two matrices $\boldsymbol{U}$ and $\boldsymbol{V}$:
\begin{align*}
    \boldsymbol{Q} = \begin{bmatrix}
            5 & 4 & 3 & 2 & 1 \\
            4 & 8 & 6 & 4 & 2 \\
            3 & 6 & 12 & 8 & 4 \\
            2 & 4 & 8 & 16 & 8 \\
            1 & 2 & 4 & 8 & 16 
        \end{bmatrix} = \overbrace{\begin{bmatrix}
            1 & 1 & 1 & 1 & 1 \\
            1 & 2 & 2 & 2 & 2 \\
            1 & 2 & 4 & 4 & 4 \\
            1 & 2 & 4 & 8 & 8 \\
            1 & 2 & 4 & 8 & 16 
        \end{bmatrix}}^{\boldsymbol{U}} \circ \overbrace{\begin{bmatrix}
            5 & 4 & 3 & 2 & 1 \\
            4 & 4 & 3 & 2 & 1 \\
            3 & 3 & 3 & 2 & 1 \\
            2 & 2 & 2 & 2 & 1 \\
            1 & 1 & 1 & 1 & 1 
        \end{bmatrix}}^{\boldsymbol{V}}.
\end{align*}
As matrices $\boldsymbol{U}$ and $\boldsymbol{V}$ are of the form in \eqref{two_seq_mat} with $u_i =2^{i-1}$ and $v_i = 6 - i$, $i \in [5]$, $\boldsymbol{Q}$ is a factorizable matrix. Therefore, the inverse $\boldsymbol{Q}^{-1}$ can be computed as follows:
\begin{align*}
    & \delta_i = \frac{u_{i+1}}{u_i (u_{i+1} v_i - u_i v_{i+1})} = \frac{1}{2^{i-2} (7-i)}, \ \ \theta_{i+1} = \frac{u_i}{u_{i+1}} , \ \  i \in [4], \quad \text{and} \quad \delta_5 = \frac{1}{u_5 v_5} \\
    \Rightarrow \ & \delta_1 = \frac{1}{3}, \ \delta_2 = \frac{1}{5}, \ \delta_3 = \frac{1}{8}, \ \delta_4 = \frac{1}{12}, \ \delta_5 = \frac{1}{16}, \ \ \theta_{i+1} = \frac{1}{2}, \ \forall i \in [4]
\end{align*}
and thus,
\begin{align}
    \bm{Q}^{-1} = \begin{pmatrix}
        \bm{\frac{1}{3}}&\bm{-\frac{1}{3}\cdot \frac{1}{2}}&0&0&0\\
    \bm{-\frac{1}{3}\cdot\frac{1}{2}}&\bm{\frac{1}{3}\cdot \left(\frac{1}{2}\right)^2}+\frac{1}{5}&-\frac{1}{5}\cdot\frac{1}{2}&0&0\\
    0&-\frac{1}{5}\cdot \frac{1}{2}&\frac{1}{5}\cdot\left(\frac{1}{2}\right)^2+\bm{\frac{1}{8}}&\bm{-\frac{1}{8}\cdot \frac{1}{2}}&0\\
    0&0&\bm{-\frac{1}{8}\cdot\frac{1}{2}}&\bm{\frac{1}{8}\cdot\left(\frac{1}{2}\right)^2}+\frac{1}{12}&-\frac{1}{12}\cdot\frac{1}{2}\\
    0&0&0&-\frac{1}{12}\cdot\frac{1}{2}&\frac{1}{12}\cdot\left(\frac{1}{2}\right)^2+\bm{\frac{1}{16}}\end{pmatrix}.\label{ex:rank1_Q}
\end{align}
In \eqref{ex:rank1_Q}, we alternate between bold and non-bold numbers to highlight the $2\times 2$ rank-one matrices adding up to $\bm{Q^{-1}}$. 
\end{example}

\noindent Note that using Proposition~\ref{prop:hull1d} requires describing not just $\boldsymbol{Q}^{-1}$, but also $\hat{\boldsymbol{Q}}_{S}^{-1}$ for all $S \subseteq [n]$, the inverses of all principal submatrices of $\bm{Q}$.  As we point out in Remark~\ref{rem:submatrix1d}, principal submatrices of factorizable matrices are also factorizable. Therefore, we can use Proposition~\ref{prop:rank-1} to describe their inverses. The closed form expressions for matrices $\hat{\boldsymbol{Q}}_{S}^{-1}$ are given in Corollary~\ref{cor:submat_pd}.

\begin{remark}\label{rem:submatrix1d}
Given an index set $S \subseteq [n]$, the principal submatrix of a factorizable matrix $\bm{Q} = \bm{U} \circ \bm{V}$ induced by $S$ is a factorizable matrix that can be represented as $\bm{Q}_S = \bm{U}_S \circ \bm{V}_S$, where $\boldsymbol{U}_{S}$ and $\boldsymbol{V}_{S}$ are the principal submatrices of $\ \boldsymbol{U}$ and $\boldsymbol{V}$ induced by $S$, respectively.
\end{remark}

\begin{cor} \label{cor:submat_pd}
    Given an index set $S = \left\{t_1, \ldots, t_k\right\} \subseteq [n]$ with $1 \le t_1 <\cdots<t_k \le n $, define 
    \begin{align}
        \delta^{S}_{\ell} = \frac{u_{t_{\ell+1}}}{u_{t_{\ell}} \left(u_{t_{\ell+1}} v_{t_{\ell}} - u_{t_{\ell}} v_{t_{\ell+1}} \right)}, \ \ \theta_{\ell+1}^{S} &= \frac{u_{t_{\ell}}}{u_{t_{\ell+1}}}, \quad  \ell \in [k-1], \quad \text{ and } \quad \delta^{S}_k = \frac{1}{u_{t_k} v_{t_k}} \cdot \label{sub_pq_closed_form}
    \end{align}
    Then, letting $\ \boldsymbol{\gamma}^{S}_{\ell} = \boldsymbol{e}_{t_{\ell}} - \theta_{\ell+1}^{S} \boldsymbol{e}_{t_{\ell+1}}$ for $\ell \in [k-1]$  and $\ \boldsymbol{\gamma}_k^S = \boldsymbol{e}_{t_k}$, matrix $\hat{\boldsymbol{Q}}_{S}^{-1}$ can be expressed as 
    \begin{align}
        \hat{\boldsymbol{Q}}_{S}^{-1} = \sum_{\ell=1}^k \delta^{S}_{\ell} \boldsymbol{\gamma}^{S}_{\ell} \left(\boldsymbol{\gamma}^{S}_{\ell}\right)^\top.
        \label{inv_singleton}
    \end{align}

\end{cor}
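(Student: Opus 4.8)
The plan is to combine Remark~\ref{rem:submatrix1d} with Proposition~\ref{prop:rank-1}, essentially applying the latter to the submatrix $\bm{Q}_S$ rather than to $\bm{Q}$ itself. By Remark~\ref{rem:submatrix1d}, $\bm{Q}_S = \bm{U}_S \circ \bm{V}_S$ is again a factorizable matrix, generated by the subsequences $\{u_{t_\ell}\}_{\ell\in[k]}$ and $\{v_{t_\ell}\}_{\ell\in[k]}$ indexed according to the ordering $t_1 < \cdots < t_k$ of $S$. Since $\bm{Q}\succ\bm{0}$ implies $\bm{Q}_S\succ\bm{0}$ (and in particular $\bm{Q}_S$ is nonsingular), Proposition~\ref{prop:rank-1} applies verbatim with $n$ replaced by $k$ and with $u_i,v_i$ replaced by $u_{t_\ell},v_{t_\ell}$.

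First I would invoke Proposition~\ref{prop:rank-1} on $\bm{Q}_S\in\R^{k\times k}$: its inverse is $\bm{Q}_S^{-1} = \sum_{\ell=1}^k \bar\delta_\ell\, \bar{\bm\gamma}_\ell \bar{\bm\gamma}_\ell^\top$, where $\bar{\bm\gamma}_\ell = \bm{e}_\ell - \bar\theta_{\ell+1}\bm{e}_{\ell+1}\in\R^k$ for $\ell\in[k-1]$, $\bar{\bm\gamma}_k = \bm{e}_k$, and the scalars $\bar\delta_\ell,\bar\theta_{\ell+1}$ are exactly the formulas in \eqref{sub_pq_closed_form} (the proposition's $\delta_i,\theta_{i+1}$ with $u_i\mapsto u_{t_\ell}$, $v_i\mapsto v_{t_\ell}$). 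So $\delta_\ell^S$ and $\theta_{\ell+1}^S$ as defined in the statement coincide with these. Second, I would apply the hat operation: by definition $\hat{\bm Q}_S^{-1}$ is obtained from $\bm{Q}_S^{-1}$ by placing its $(\ell,\ell')$ entry into position $(t_\ell,t_{\ell'})$ of an $n\times n$ zero matrix. Under this embedding the standard basis vector $\bm{e}_\ell\in\R^k$ maps to $\bm{e}_{t_\ell}\in\R^n$, so each rank-one term $\bar{\bm\gamma}_\ell\bar{\bm\gamma}_\ell^\top$ maps to $\bm\gamma_\ell^S(\bm\gamma_\ell^S)^\top$ with $\bm\gamma_\ell^S = \bm{e}_{t_\ell} - \theta_{\ell+1}^S\bm{e}_{t_{\ell+1}}$ and $\bm\gamma_k^S = \bm{e}_{t_k}$, precisely as in the statement. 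Since the hat operation is linear (it is just a coordinate re-indexing followed by zero-padding), it distributes over the sum, yielding $\hat{\bm Q}_S^{-1} = \sum_{\ell=1}^k \delta_\ell^S\,\bm\gamma_\ell^S(\bm\gamma_\ell^S)^\top$, which is \eqref{inv_singleton}.

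There is no real obstacle here; the only point requiring a word of care is that the hat operation commutes with taking the inverse in the intended sense — but this is exactly how $\hat{\bm Q}_S^{-1}$ was defined in the Notation section (take the principal submatrix, invert, then zero-pad), so nothing needs to be proved. One should also note that the indices $t_{\ell+1}$ appearing in $\bm\gamma_\ell^S$ refer to consecutive elements of $S$ in sorted order, not to $t_\ell + 1$ in $[n]$, and that positive definiteness of $\bm{Q}_S$ guarantees all denominators $u_{t_{\ell+1}}v_{t_\ell} - u_{t_\ell}v_{t_{\ell+1}}$ and $u_{t_k}v_{t_k}$ are nonzero (indeed, by Assumption~\ref{assump:factorable} applied to the pair $t_\ell < t_{\ell+1}$ and the diagonal index $t_k$), so the expressions in \eqref{sub_pq_closed_form} are well-defined. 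The proof is therefore a two-line citation of Remark~\ref{rem:submatrix1d} followed by Proposition~\ref{prop:rank-1}, with the observation that the hat operation merely relabels basis vectors $\bm{e}_\ell\mapsto\bm{e}_{t_\ell}$.
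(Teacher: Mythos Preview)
Your proposal is correct and follows exactly the paper's approach: the paper states the corollary without a separate proof, noting in the preceding paragraph that by Remark~\ref{rem:submatrix1d} principal submatrices of factorizable matrices are again factorizable, so Proposition~\ref{prop:rank-1} applies directly to $\bm{Q}_S$. Your additional remarks on the hat operation and well-definedness of the denominators are accurate elaborations of this same argument.
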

\noindent Example~\ref{eg:singleton_inv_2} below illustrates a key property arising from Corollary~\ref{cor:submat_pd}: several of the rank-one matrices appearing in the representations of $\bm{Q^{-1}}$ and $\hat{\bm{Q}}_S^{-1}$ are the same.

\setcounter{example}{1}
\begin{example}[Continued] \label{eg:singleton_inv_2}
 Suppose $S = \{1,2,4,5\}$. Then, $\bm{Q_S}$
 is a factorizable matrix constructed by $\{u_i\}_{i \in S} = \{1,2,8,16\}$ and $\{v_i\}_{i \in S} = \{5,4,2,1\}$
\begin{align*}
    &\boldsymbol{Q}_{S} = \begin{bmatrix}
            5 & 4 & 2 & 1 \\
            4 & 8 & 4 & 2 \\
            2 & 4 & 16 & 8 \\
            1 & 2 & 8 & 16
        \end{bmatrix} = \overbrace{\begin{bmatrix}
            1 & 1 & 1 & 1 \\
            1 & 2 & 2 & 2 \\
            1 & 2 & 8 & 8 \\
            1 & 2 & 8 & 16
        \end{bmatrix}}^{\boldsymbol{U}_{S}} \circ \overbrace{\begin{bmatrix}
            5 & 4 & 2 & 1\\
            4 & 4 & 2 & 1\\
            2 & 2 & 2 & 1 \\
            1 & 1 & 1 & 1
        \end{bmatrix}}^{\boldsymbol{V}_{S}} 
\end{align*}
and the corresponding $\{\delta_i\}_{i \in [4]}$ and $\{\theta_i\}_{i \in [2,4]}$ are computed as
\begin{align*}
    & \delta_1^S = \frac{1}{3}, \ \ \delta_2^S = \frac{1}{7}, \ \ \delta_3^S = \frac{1}{12}, \ \ \delta_4^S = \frac{1}{16}, \ \ \theta_2^S = \frac{1}{2}, \ \ \theta_3^S = \frac{1}{4}, \ \ \theta_4^S = \frac{1}{2}.
\end{align*}
Therefore, we find that 
\begin{align}
    \hat{\bm{ Q}}_S^{-1} = \begin{pmatrix}
        \bm{\frac{1}{3}}&\bm{-\frac{1}{3}\cdot \frac{1}{2}}&0&0&0\\
    \bm{-\frac{1}{3}\cdot\frac{1}{2}}&\bm{\frac{1}{3}\cdot \left(\frac{1}{2}\right)^2}+\frac{1}{7}&0&-\frac{1}{7}\cdot \frac{1}{4}&0\\
    0&0&0&0&0\\
    0&-\frac{1}{7}\cdot \frac{1}{4}&0&\frac{1}{7}\cdot\left(\frac{1}{4}\right)^2+\bm{\frac{1}{12}}&\bm{-\frac{1}{12}\cdot\frac{1}{2}}\\
    0&0&0&\bm{-\frac{1}{12}\cdot\frac{1}{2}}&\bm{\frac{1}{12}\cdot\left(\frac{1}{2}\right)^2}+\bm{\frac{1}{16}}\end{pmatrix}.\label{ex:rank1_Q_S}
\end{align}

Observe that every bold element in \eqref{ex:rank1_Q_S} also appears in the description of $\bm{Q^{-1}}$ in \eqref{ex:rank1_Q}. In fact, the only new rank-one matrix in the description of $\hat{\boldsymbol{Q}}_{S}^{-1}$ is of the form $\frac{1}{7}\begin{pmatrix}1&-1/4\\-1/4&1/16\end{pmatrix}$ corresponding to indexes $\{2,4\}$, that is, the indexes adjacent to the missing index $3\not\in S$.
\hfill $\blacksquare$
\end{example}

We now formalize the property revealed by Example~\ref{eg:singleton_inv_2}. Observe from \eqref{sub_pq_closed_form} that computing $\delta_\ell^S$ and $\theta_{\ell+1}^S$ requires only two indexes $t_\ell$ and $t_{\ell+1}$, and does not depend on the remaining elements of $S$. 

\begin{definition} \label{def:consecutive1d}
Given a factorizable matrix $\bm{Q} = \bm{U} \circ \bm{V}$ given by \eqref{two_seq_mat_repeat}, define rank-one matrices $\bm{\Lambda}_{i \to j }$ as
\begin{align*}
    \bm{\Lambda}_{i\to j}:=\frac{u_j}{u_i\left(u_jv_i-u_iv_j\right)}\left(\bm{e}_i-\frac{u_i}{u_j}\bm{e}_j\right)\left(\bm{e}_i-\frac{u_i}{u_j}\bm{e}_j\right)^\top, \   1 \leq i < j \leq n, \quad \text{and} \quad \bm{\Lambda}_{i\to n+1}:=\frac{1}{u_iv_i} \bm{e}_i \bm{e}_i^\top, \ \  1 \leq i \leq n.
\end{align*}
\end{definition}

\begin{obs} \label{obs:consecutive1d}
Given a factorizable matrix $\bm{Q}$ and an index set $S\subseteq [n]$, if $i<j$ are consecutive elements of $\ S$ (i.e., $i,j\in S$ and $\ell\not\in S$ for $i<\ell<j$), then the rank-one matrix
$\bm{\Lambda}_{i\to j}$ appears in the representation \eqref{inv_singleton} of $\ \hat{\bm{Q}}_S^{-1}$. Similarly, if $i$ is the last element of $S$, then the rank-one matrix $\bm{\Lambda}_{i\to n+1}:=\frac{1}{u_iv_i}\bm{e}_i\bm{e}_i^\top$ appears in the representation \eqref{inv_singleton} of $\ \hat{\bm{Q}}_S^{-1}$. Moreover, only $\ {n+1 \choose 2}$ rank-one matrices $\bm{\Lambda}_{i \to j}$ are needed to represent $\hat{\bm{Q}}_S^{-1}$ for all 
 $S \subseteq [n]$.
\end{obs}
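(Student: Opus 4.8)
The plan is to prove Observation~\ref{obs:consecutive1d} by directly comparing the closed-form expressions in Corollary~\ref{cor:submat_pd} with the definition of the rank-one matrices $\bm{\Lambda}_{i\to j}$ in Definition~\ref{def:consecutive1d}. Recall that by Remark~\ref{rem:submatrix1d}, for $S=\{t_1,\dots,t_k\}$ with $t_1<\cdots<t_k$, the submatrix $\bm{Q}_S$ is itself factorizable with generating sequences $\{u_{t_\ell}\}$ and $\{v_{t_\ell}\}$, so Corollary~\ref{cor:submat_pd} gives $\hat{\bm{Q}}_S^{-1}=\sum_{\ell=1}^k \delta_\ell^S \bm{\gamma}_\ell^S(\bm{\gamma}_\ell^S)^\top$ with the parameters of \eqref{sub_pq_closed_form}.

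First I would observe that for $\ell\in[k-1]$, the term $\delta_\ell^S \bm{\gamma}_\ell^S(\bm{\gamma}_\ell^S)^\top$ depends only on the pair $(t_\ell,t_{\ell+1})$ and not on the rest of $S$: indeed, substituting $i=t_\ell$, $j=t_{\ell+1}$ into Definition~\ref{def:consecutive1d}, we have $\delta_\ell^S=\frac{u_j}{u_i(u_jv_i-u_iv_j)}$, $\theta_{\ell+1}^S=\frac{u_i}{u_j}$, and $\bm{\gamma}_\ell^S=\bm{e}_i-\frac{u_i}{u_j}\bm{e}_j$, so $\delta_\ell^S\bm{\gamma}_\ell^S(\bm{\gamma}_\ell^S)^\top=\bm{\Lambda}_{i\to j}$ exactly. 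Likewise, the last term $\delta_k^S\bm{\gamma}_k^S(\bm{\gamma}_k^S)^\top=\frac{1}{u_{t_k}v_{t_k}}\bm{e}_{t_k}\bm{e}_{t_k}^\top=\bm{\Lambda}_{t_k\to n+1}$. Now if $i<j$ are consecutive in $S$, then $i=t_\ell$ and $j=t_{\ell+1}$ for some $\ell\in[k-1]$ (by the definition of consecutive), so $\bm{\Lambda}_{i\to j}$ is precisely the $\ell$-th summand in \eqref{inv_singleton}; and if $i$ is the last element of $S$, then $i=t_k$, so $\bm{\Lambda}_{i\to n+1}$ is the $k$-th summand. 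This establishes the first two claims.

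For the counting claim, I would note that every summand of $\hat{\bm{Q}}_S^{-1}$ is, by the above, one of the matrices $\bm{\Lambda}_{i\to j}$ with $1\le i<j\le n$ (from the first $k-1$ terms) or $\bm{\Lambda}_{i\to n+1}$ with $1\le i\le n$ (from the last term). The total number of such matrices is $\binom{n}{2}+n=\binom{n+1}{2}$, and this collection suffices to represent $\hat{\bm{Q}}_S^{-1}$ for every $S\subseteq[n]$ (with the convention $\hat{\bm{Q}}_\emptyset^{-1}=\bm{O}$ contributing nothing).

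There is essentially no obstacle here — the statement is a bookkeeping consequence of Corollary~\ref{cor:submat_pd} and the chosen notation in Definition~\ref{def:consecutive1d}. The only point requiring minor care is the boundary handling: confirming that the ``dangling'' last index of $S$ produces the $\bm{\Lambda}_{i\to n+1}$ term rather than a $\bm{\Lambda}_{i\to j}$ term, and that indices not in $S$ (including the case $S=\emptyset$) simply never generate any $\bm{\Lambda}$, which is immediate from the fact that $\bm{\gamma}_\ell^S$ is supported on $\{t_\ell,t_{\ell+1}\}\subseteq S$.
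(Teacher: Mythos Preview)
Your proposal is correct and matches the paper's approach exactly: the paper presents this as an observation following immediately from the closed-form expressions in Corollary~\ref{cor:submat_pd} (noting that $\delta_\ell^S$ and $\theta_{\ell+1}^S$ depend only on the two consecutive indices $t_\ell,t_{\ell+1}$) together with Definition~\ref{def:consecutive1d}, and does not give a separate formal proof. Your write-up simply makes this bookkeeping explicit, including the boundary case and the count $\binom{n}{2}+n=\binom{n+1}{2}$.
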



\subsection{Convexification of \texorpdfstring{$X_{\boldsymbol{Q}}$}{P\_{\textbf{Q}\^B}}}\label{subsec:convexHull}

We present a compact representation of $P_{\boldsymbol{Q}\sz{dn}}$ defined in Proposition~\ref{prop:hull1d}, which we repeat for convenience:
\begin{align*}
    P_{\boldsymbol{Q}\sz{dn}} = \conv \left( \left\{ \left(\boldsymbol{e}_S, \hat{\boldsymbol{Q}}_{S}\sz{dn}^{-1} \right)\right\}_{S \subseteq [n]}\right).
\end{align*}
We start by giving a mixed-integer representation of the set of extreme points $\ext(P_{\bm{Q}})=\left\{ \left(\boldsymbol{e}_S, \hat{\boldsymbol{Q}}_{S}\sz{dn}^{-1} \right)\right\}_{S \subseteq [n]}$ in an extended formulation. Motivated by Observation~\ref{obs:consecutive1d}, we introduce for all $1\leq i<j\leq n$ variables $w_{ij}$ which equal to one if and only if $i$ and $j$ are consecutive elements of $S$, and zero otherwise. Moreover, we also introduce variables $w_{0i}=1$ if and only if $i$ is the first element of $S$, and $w_{i,n+1}=1$ if and only $i$ is the last element of $S$. Consider the linear constraints 
\begin{subequations}\label{eq:path}
\begin{align}
    &\sum_{i=0}^{\ell-1} w_{i \ell} - \sum_{j=\ell+1}^{n+1} w_{\ell j} = \begin{cases}
            -1, & \text{ if } \ \ell = 0\\
            1, & \text{ if } \ \ell = n+1\\
            0, & \text{ otherwise}
        \end{cases} \qquad \ell\in\{0,\dots,n+1\}\label{spp_flow_1d}\\
      &z_\ell  = \sum_{i=0}^{\ell-1} w_{i\ell},  \qquad  \ell \in [n] \label{spp_z_1d}\\ 
    &\boldsymbol{W} = \sum_{i=1}^{n} \sum_{j=i+1}^{n+1}  \boldsymbol{\Lambda}_{i\to j} w_{ij}\label{inv_decomp_1d}\\
    &\bm{w}\geq \bm{0}\label{eq:path_nonneg}
\end{align}
\end{subequations}
where matrices $\bm{\Lambda}_{i\to j}$ are given in Definition~\ref{def:consecutive1d}. Intuitively, due to constraints \eqref{spp_flow_1d}, $\bm{w}$ represents the arcs of a $0-(n+1)$ path in a directed acyclic graph $\mathcal{G}=(V,A)$ where $V=\{0,\dots, n+1\}$ and $A=\{(i,j): 0\leq i<j\leq n+1$\}, illustrated in Figure~\ref{fig:DAG}. 
Moreover, constraints \eqref{spp_z_1d} indicate that the nodes visited by a chosen path are precisely those corresponding to indexes $\ell\in [n]$ such that $z_\ell=1$. Proposition~\ref{prop:valid_1d} states that constraints \eqref{eq:path} are valid for $\ext(P_{\bm{Q}})$, and Proposition~\ref{prop:integral} states that the polytope \eqref{eq:path} is integral in $\bm{z}$ and $\bm{w}$.

\begin{figure}[htp]
    \centering
    \includegraphics[width=0.55\linewidth]{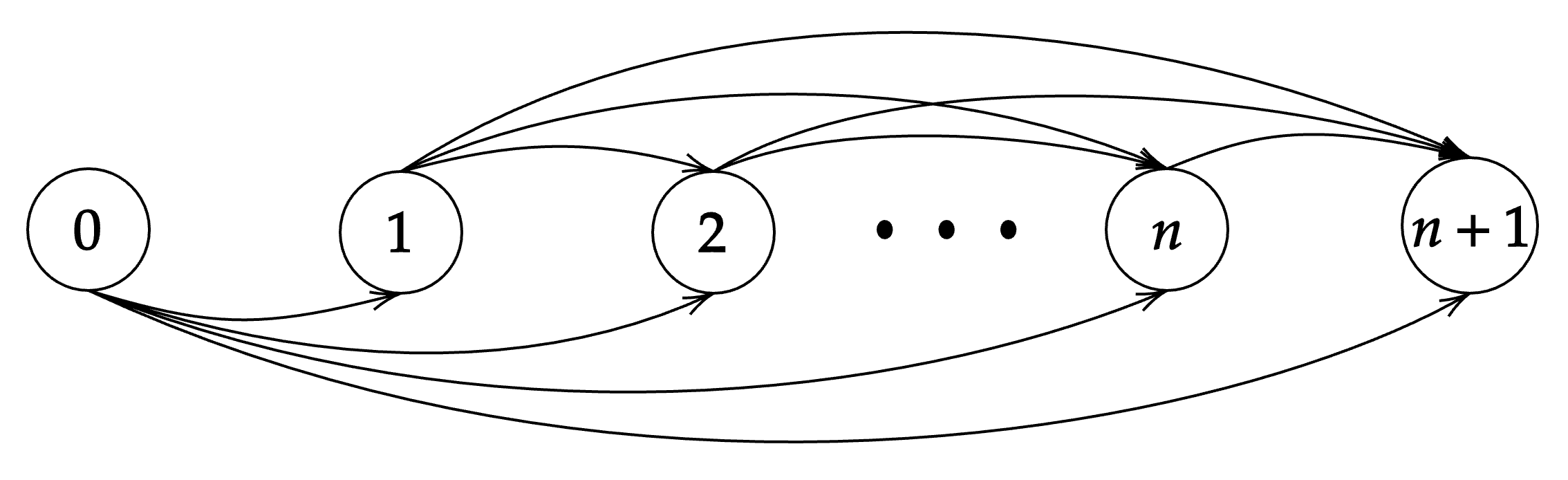}
    \caption{Directed acyclic graph $\mathcal{G}$: arc $(i,j)$ represents rank-one matrix  $\boldsymbol{\Lambda}_{i\to j}$.}
    \label{fig:DAG}
\end{figure}

\begin{proposition}\label{prop:valid_1d}
Given any $\bm{z}\in \{0,1\}^n$, the unique solution $(\bm{w},\bm{W})$ to \eqref{eq:path} satisfies $\bm{W}=\hat{\bm{Q}}_S^{-1}$, where $S=\left\{\ell\in [n]: z_\ell=1\right\}$.
\end{proposition}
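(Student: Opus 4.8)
The statement asks for two things: uniqueness of the solution $(\bm w,\bm W)$ to \eqref{eq:path} for a fixed $\bm z\in\{0,1\}^n$, and the identification $\bm W=\hat{\bm Q}_S^{-1}$ with $S=\{t_1<\dots<t_k\}=\{\ell:z_\ell=1\}$. Since $\bm W$ is completely determined by $\bm w$ through \eqref{inv_decomp_1d}, and since one checks immediately that the incidence vector of the $0$-$(n+1)$ path $0\to t_1\to\dots\to t_k\to n+1$ in $\mathcal G$ satisfies \eqref{spp_flow_1d}--\eqref{eq:path_nonneg}, the entire proof reduces to showing that this path incidence vector is the \emph{only} feasible $\bm w$, and then reading off $\bm W$.

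For uniqueness I would proceed in two steps. \emph{(i) Skipped nodes carry no flow.} For $\ell\in[n]\setminus S$, constraint \eqref{spp_z_1d} gives $\sum_{i=0}^{\ell-1}w_{i\ell}=z_\ell=0$, so by \eqref{eq:path_nonneg} every arc into $\ell$ vanishes; the conservation equation \eqref{spp_flow_1d} at $\ell$ then forces every arc out of $\ell$ to vanish as well. Hence $\bm w$ is supported on arcs among $\{0\}\cup S\cup\{n+1\}$; write $t_0:=0$ and $t_{k+1}:=n+1$. \emph{(ii) The surviving arcs form the path.} First, the inflow into $t_a$ equals $1$ for every $a\in[k+1]$: for $a\le k$ this is \eqref{spp_z_1d} with $z_{t_a}=1$, for $a=k+1$ it is \eqref{spp_flow_1d} at $n+1$, and in both cases step (i) lets one restrict the sum to predecessors in $\{t_0,\dots,t_{a-1}\}$. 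Next, summing \eqref{spp_flow_1d} over the node set $\{t_0,\dots,t_{a-1}\}$: internal arcs cancel, there are no backward arcs into the set, and the right-hand side contributes only the $-1$ at node $0$, which yields $\sum_{0\le b<a\le c\le k+1}w_{t_bt_c}=1$. Subtracting the unit of inflow into $t_a$ already contained in this cut flow leaves $\sum_{b<a<c}w_{t_bt_c}=0$, hence $w_{t_bt_c}=0$ whenever $b<a<c$ by nonnegativity. Letting $a$ range over $[k+1]$ kills every arc $(t_b,t_c)$ with $c\ge b+2$, and then the unit inflow into each $t_a$ forces $w_{t_{a-1}t_a}=1$. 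Thus $\bm w$ is uniquely the path incidence vector.

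Plugging this $\bm w$ into \eqref{inv_decomp_1d} — whose double sum omits $i=0$, so the arc $(0,t_1)$ contributes nothing — gives $\bm W=\sum_{\ell=1}^{k-1}\bm\Lambda_{t_\ell\to t_{\ell+1}}+\bm\Lambda_{t_k\to n+1}$. Comparing Definition~\ref{def:consecutive1d} with the quantities $\delta_\ell^S,\theta_{\ell+1}^S,\bm\gamma_\ell^S$ from Corollary~\ref{cor:submat_pd} shows $\bm\Lambda_{t_\ell\to t_{\ell+1}}=\delta_\ell^S\bm\gamma_\ell^S(\bm\gamma_\ell^S)^\top$ for $\ell\in[k-1]$ and $\bm\Lambda_{t_k\to n+1}=\delta_k^S\bm\gamma_k^S(\bm\gamma_k^S)^\top$, so $\bm W=\sum_{\ell=1}^k\delta_\ell^S\bm\gamma_\ell^S(\bm\gamma_\ell^S)^\top=\hat{\bm Q}_S^{-1}$ by \eqref{inv_singleton}; the degenerate case $S=\emptyset$ (path $0\to n+1$) gives $\bm W=\bm O=\hat{\bm Q}_\emptyset^{-1}$.

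The only step that is not bookkeeping is step (ii): uniqueness of $\bm w$ must be established \emph{without} assuming $\bm w$ integral — integrality is instead a consequence, the content of Proposition~\ref{prop:integral}. The cut-flow argument above is the cleanest route to it; the points requiring care are the sign convention in \eqref{spp_flow_1d} and the fact that the sums in \eqref{spp_z_1d}--\eqref{spp_flow_1d} a priori range over \emph{all} lower-indexed nodes rather than those in $S$, which is exactly why step (i) has to come first.
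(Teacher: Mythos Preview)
Your proof is correct and follows the same approach as the paper: show that the constraints force $\bm w$ to be the incidence vector of the path $0\to t_1\to\cdots\to t_k\to n+1$, then invoke Corollary~\ref{cor:submat_pd}/Observation~\ref{obs:consecutive1d} to identify $\bm W$. The paper's proof simply asserts the first part in one sentence, so your cut-flow argument in step~(ii) is a welcome expansion of a detail the paper takes for granted.
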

\begin{proof}
The path constraints ensure that $w_{ij}=1$ if and only $i$ and $j$ are consecutive non-zero indexes of $\bm{z}$, and $w_{i,n+1}=1$ if and only if $i$ is the last index satisfying $z_i=1$. From constraint \eqref{inv_decomp_1d} and Observation~\ref{obs:consecutive1d}, we directly conclude that $\bm{W}=\hat{\bm{Q}}_S^{-1}$. \end{proof}

\begin{proposition}\label{prop:integral}
All extreme points of the polytope defined by constraints \eqref{eq:path} are integral in $\bm{w}$ and $\bm{z}$. 
\end{proposition}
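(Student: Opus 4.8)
The plan is to reduce the claim to the integrality of a classical network-flow polytope. The first observation is that in the system \eqref{eq:path} the variables $\bm{z}$ and $\bm{W}$ are not genuinely constrained — they are \emph{defined} by $\bm{w}$: constraint \eqref{spp_z_1d} expresses each $z_\ell$ as a linear function of $\bm{w}$, and constraint \eqref{inv_decomp_1d} expresses $\bm{W}$ as a linear function of $\bm{w}$, while the only restrictions on $\bm{w}$ itself are the flow-conservation equations \eqref{spp_flow_1d} and nonnegativity \eqref{eq:path_nonneg}. Hence the polytope $P$ defined by \eqref{eq:path} is the image of the polyhedron $F:=\{\bm{w}\ge\bm{0}:\eqref{spp_flow_1d}\}$ under the affine, injective map $g:\bm{w}\mapsto\bigl(\bm{w},\bm{z}(\bm{w}),\bm{W}(\bm{w})\bigr)$ (injective because its first block is the identity on $\bm{w}$). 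A short standard argument then gives $\ext(P)=g(\ext(F))$: every point of $P$ equals $g$ of its $\bm{w}$-component, which lies in $F$; if $g(\bar{\bm{w}})$ were a nontrivial convex combination of points of $P$, projecting onto the $\bm{w}$-coordinates and using injectivity of $g$ would exhibit $\bar{\bm{w}}$ as a nontrivial combination in $F$, and conversely. So it suffices to show that every extreme point of $F$ has integral $\bm{w}$, and that $\bm{z}(\bm{w})$ is then integral as well.

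The second step is to identify $F$ as a flow polytope on the DAG $\mathcal{G}=(V,A)$ of Figure~\ref{fig:DAG}. The coefficient matrix of \eqref{spp_flow_1d} is precisely the node–arc incidence matrix of $\mathcal{G}$: each column $w_{ij}$ has a single $-1$ in row $i$ and a single $+1$ in row $j$, so the matrix is totally unimodular; the right-hand side is integral. Moreover $F\subseteq[0,1]^{|A|}$: summing \eqref{spp_flow_1d} over the nodes $\{0,\dots,\ell\}$ (the natural index order is a topological order of $\mathcal{G}$) telescopes to show that the total $\bm{w}$-flow across the cut separating $\{0,\dots,\ell\}$ from $\{\ell+1,\dots,n+1\}$ equals $1$ for every $\ell$, and since each $w_{ij}$ is one of the nonnegative terms of such a sum, $0\le w_{ij}\le 1$. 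Therefore $F$ is a bounded polytope, and by total unimodularity with integral right-hand side all of its extreme points are integral; by acyclicity they are exactly the incidence vectors of directed $0$–$(n+1)$ paths in $\mathcal{G}$.

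The final step deduces integrality of $\bm{z}$. For an extreme point $\bar{\bm{w}}$ of $F$, i.e., the incidence vector of some $0$–$(n+1)$ path, constraint \eqref{spp_z_1d} gives $z_\ell(\bar{\bm{w}})=\sum_{i=0}^{\ell-1}\bar{w}_{i\ell}$, which counts the arcs of that path entering node $\ell$ and is therefore $0$ or $1$. Hence $g(\bar{\bm{w}})$ has integral $\bm{w}$- and $\bm{z}$-components, and since $\ext(P)=g(\ext(F))$ the proposition follows. I do not anticipate a real obstacle; the only points needing a careful line are the reduction $\ext(P)=g(\ext(F))$ — that adjoining the affinely-determined coordinates $\bm{z},\bm{W}$ creates no new vertices — and noting the boundedness of $F$ so that ``extreme point'' coincides with ``vertex of a polytope'' before invoking total unimodularity. (Note $\bm{W}$ itself is generally not integral, consistent with the statement, which only asserts integrality in $\bm{w}$ and $\bm{z}$.)
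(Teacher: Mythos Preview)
Your proof is correct and follows essentially the same approach as the paper: project out the auxiliary variables $\bm{z}$ and $\bm{W}$ (which are affine functions of $\bm{w}$), reduce to the flow polytope defined by \eqref{spp_flow_1d} and \eqref{eq:path_nonneg}, and invoke total unimodularity. The paper argues slightly more tersely via the optimization characterization of extreme points (any linear objective reduces, after substitution, to a shortest-path LP in $\bm{w}$), whereas you argue directly via the bijection $\ext(P)=g(\ext(F))$ and add an explicit boundedness check; these are cosmetic differences rather than a genuinely different route.
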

\begin{proof}
Since extreme points of a polytope correspond to unique solutions to linear optimization problems, consider the optimization problem
$$\min_{\bm{z},\bm{w},\bm{W}}\bm{c^\top z}+\bm{b^\top w}+\sum_{i=1}^n\sum_{j=1}^n r_{ij}W_{ij} \text{  s.t.  }\eqref{eq:path}.$$
In this optimization problem, variables $\bm{z}$ and $\bm{W}$ can be easily projected out using equalities \eqref{spp_z_1d} and \eqref{inv_decomp_1d}, respectively, resulting in an optimization problem with just variables $\bm{w}$ and the totally unimodular constraints \eqref{spp_flow_1d} and \eqref{eq:path_nonneg}. Thus, there exists an optimal solution integral in $\bm{w}$, which in turn implies integrality in $\bm{z}$. 
\end{proof}

From Propositions~\ref{prop:valid_1d} and \ref{prop:integral}, we conclude that $$P_{\bm{Q}}=\left\{(\bm{z},\bm{W})\in \R^{n+n^2}: \exists \bm{w}\in \R^{(n+1)(n+2)/2}\text{ such that \eqref{eq:path} holds} \right\}.$$
Using Proposition~\ref{prop:hull1d}, we obtain a description for the convex hull of $X_{\bm{Q}}$.
\begin{thm} \label{cor:conv_singleton}
Given a factorizable matrix $\bm{Q}=\bm{U}\circ \bm{V}$, 
\begin{equation}
    \begin{aligned}
        \cl \conv \left( X_{\boldsymbol{Q}}\right)=\Bigg\{(\bm{x},\bm{z},\tau) \in \R^{2n+1}: \ & \exists \bm{W}\in \R^{n\times n}, \  \bm{w} \in \mathbb{R}_+^{(n+1)(n+2)/2} \ \text{ such that}
        &\begin{bmatrix}
            \boldsymbol{W} & \boldsymbol{x}\\
            \boldsymbol{x}^\top & \tau
        \end{bmatrix} \succeq \bm{0} \text{ and }  \eqref{eq:path}\Bigg\}.
    \end{aligned} \label{convX_singleton_sdp}
\end{equation}
\end{thm}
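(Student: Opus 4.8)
The plan is to combine the two results just established —
Propositions~\ref{prop:valid_1d} and \ref{prop:integral}, which together yield the stated closed-form
description of $P_{\bm{Q}}$ — with the abstract convex-hull characterization of
Proposition~\ref{prop:hull1d}. Concretely, I would first argue the identity
\[
P_{\bm{Q}}=\left\{(\bm{z},\bm{W}): \exists \bm{w}\ge \bm{0}\text{ satisfying }\eqref{spp_flow_1d},\eqref{spp_z_1d},\eqref{inv_decomp_1d}\right\},
\]
and then substitute this polyhedral description of $P_{\bm{Q}}$ into the statement of
Proposition~\ref{prop:hull1d} (specialized to $\bm{T}=\bm{Q}$, $Z=\{0,1\}^n$, so that
$P_{\bm{T}}=P_{\bm{Q}}$), which immediately produces \eqref{convX_singleton_sdp}. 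So the
theorem is essentially a ``plug-in'' once the polyhedral identity for $P_{\bm{Q}}$ is in hand.

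To establish that polyhedral identity, recall $P_{\bm{Q}}=\conv\left(\ext(P_{\bm{Q}})\right)$ with
$\ext(P_{\bm{Q}})=\{(\bm{e}_S,\hat{\bm{Q}}_S^{-1}):S\subseteq[n]\}$. Let
$R$ denote the polytope defined by constraints \eqref{eq:path}. For the inclusion
$\ext(P_{\bm{Q}})\subseteq R$: given $S\subseteq[n]$, set $\bm{z}=\bm{e}_S$ and let $\bm{w}$ be the
incidence vector of the unique $0$--$(n+1)$ path in $\mathcal{G}$ that visits exactly the nodes of
$S$; then \eqref{spp_flow_1d}--\eqref{spp_z_1d} hold by construction, and
\eqref{inv_decomp_1d} holds by Observation~\ref{obs:consecutive1d}, which says precisely that
$\hat{\bm{Q}}_S^{-1}=\sum_{(i,j)\in\text{path}}\bm{\Lambda}_{i\to j}$. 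Hence
$P_{\bm{Q}}\subseteq R$ by convexity. For the reverse inclusion: by
Proposition~\ref{prop:integral}, every extreme point of $R$ is integral in $(\bm{z},\bm{w})$;
for an integral $(\bm{z},\bm{w})\in R$, constraints \eqref{spp_flow_1d} force $\bm{w}$ to be the
incidence vector of a $0$--$(n+1)$ path, and \eqref{spp_z_1d} forces $\bm{z}=\bm{e}_S$ for the
visited set $S$, and then \eqref{inv_decomp_1d} with Proposition~\ref{prop:valid_1d} forces
$\bm{W}=\hat{\bm{Q}}_S^{-1}$. Thus every extreme point of $R$ lies in $\ext(P_{\bm{Q}})$, so
$R\subseteq P_{\bm{Q}}$; the two inclusions give equality.

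With $P_{\bm{Q}}$ replaced by its description via \eqref{eq:path}, Proposition~\ref{prop:hull1d}
reads: $(\bm{x},\bm{z},\tau)\in\cl\conv(X_{\bm{Q}})$ iff there exists $\bm{W}$ with
$\left[\begin{smallmatrix}\bm{W}&\bm{x}\\\bm{x}^\top&\tau\end{smallmatrix}\right]\succeq\bm{0}$ and
$(\bm{z},\bm{W})\in P_{\bm{Q}}$, and the latter membership is equivalent to the existence of
$\bm{w}\in\mathbb{R}_+^{(n+1)(n+2)/2}$ satisfying \eqref{eq:path}. Eliminating the intermediate
quantifier on $P_{\bm{Q}}$ and merging the two existential quantifiers on $\bm{W}$ and $\bm{w}$
gives exactly \eqref{convX_singleton_sdp}. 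One should also note $X_{\bm{Q}}$ is the specialization
of $\bar{X}_{\bm{T}}$ with $\bm{T}=\bm{Q}$ and $Z=\{0,1\}^n$, so Proposition~\ref{prop:hull1d}
applies verbatim.

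The only genuinely substantive step is the polyhedral identity for $P_{\bm{Q}}$, and even that is
already three-quarters done: Proposition~\ref{prop:valid_1d} supplies the ``uniqueness +
correctness'' needed for the reverse inclusion and Proposition~\ref{prop:integral} supplies
integrality; the one piece to be careful about is the forward inclusion
$\ext(P_{\bm{Q}})\subseteq R$, i.e. checking that for \emph{every} $S$ the path decomposition of
$\hat{\bm{Q}}_S^{-1}$ matches constraint \eqref{inv_decomp_1d} — but this is exactly the content
of Observation~\ref{obs:consecutive1d} (consecutive elements of $S$ contribute $\bm{\Lambda}_{i\to j}$,
the last element contributes $\bm{\Lambda}_{i\to n+1}$). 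I do not anticipate any real obstacle; the
proof is short and mostly a matter of carefully threading together the already-proven lemmas.
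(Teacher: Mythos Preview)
Your proposal is correct and follows essentially the same approach as the paper: the paper simply states that Propositions~\ref{prop:valid_1d} and \ref{prop:integral} give the polyhedral identity for $P_{\bm{Q}}$ and then invokes Proposition~\ref{prop:hull1d}. Your write-up is in fact more careful than the paper's, spelling out both inclusions of the polyhedral identity explicitly.
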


\subsection{Generalization to the block-factorable case} \label{subsec:Tridiag_Inv-multidim}

We extend the results in \S\ref{subsec:Tridiag_Inv-1dim} and \S\ref{subsec:convexHull} to derive a closed convex hull representation of $X_{\bm{Q}}^B$ for a block-factorizable positive definite matrix $\bm{Q}$. The structure, intuition, and results are essentially the same as in the factorizable case.

Consider a block-factorizable matrix $\boldsymbol{Q}\sz{dn} \in \mathbb{R}^{dn \times dn}$ defined by matrices $\boldsymbol{U}\sz{dn}$ and $\boldsymbol{V}\sz{dn}$
\begin{align}
    \overbrace{\begin{bmatrix}
        \boldsymbol{U}_1 \boldsymbol{V}_1^\top & \boldsymbol{U}_1 \boldsymbol{V}_2^\top & \boldsymbol{U}_1 \boldsymbol{V}_3^\top & \cdots & \boldsymbol{U}_1 \boldsymbol{V}_n^\top \\
        \boldsymbol{V}_2 \boldsymbol{U}_1^\top & \boldsymbol{U}_2 \boldsymbol{V}_2^\top & \boldsymbol{U}_2 \boldsymbol{V}_3^\top & \cdots & \boldsymbol{U}_2 \boldsymbol{V}_n^\top \\
        \boldsymbol{V}_3 \boldsymbol{U}_1^\top & \boldsymbol{V}_3 \boldsymbol{U}_2^\top & \boldsymbol{U}_3 \boldsymbol{V}_3^\top & \cdots & \boldsymbol{U}_3 \boldsymbol{V}_n^\top \\
        \vdots & \vdots & \vdots & \ddots & \vdots \\
        \boldsymbol{V}_n \boldsymbol{U}_1^\top & \boldsymbol{V}_n \boldsymbol{U}_2^\top & \boldsymbol{V}_n \boldsymbol{U}_3^\top & \cdots & \boldsymbol{U}_n \boldsymbol{V}_n^\top
    \end{bmatrix}}^{\boldsymbol{Q}\sz{dn}} := \overbrace{\begin{bmatrix}
        \boldsymbol{U}_1 & \boldsymbol{U}_1 & \boldsymbol{U}_1 & \cdots & \boldsymbol{U}_1 \\
        \boldsymbol{U}_1 & \boldsymbol{U}_2 & \boldsymbol{U}_2 & \cdots & \boldsymbol{U}_2 \\
        \boldsymbol{U}_1 & \boldsymbol{U}_2 & \boldsymbol{U}_3 & \cdots & \boldsymbol{U}_3 \\
        \vdots & \vdots & \vdots & \ddots & \vdots \\
        \boldsymbol{U}_1 & \boldsymbol{U}_2 & \boldsymbol{U}_3 & \cdots & \boldsymbol{U}_n
    \end{bmatrix}}^{\boldsymbol{U}\sz{dn}} \bullet \overbrace{\begin{bmatrix}
        \boldsymbol{V}_1 & \boldsymbol{V}_2 & \boldsymbol{V}_3 & \cdots & \boldsymbol{V}_n \\
        \boldsymbol{V}_2 & \boldsymbol{V}_2 & \boldsymbol{V}_3 & \cdots & \boldsymbol{V}_n \\
        \boldsymbol{V}_3 & \boldsymbol{V}_3 & \boldsymbol{V}_3 & \cdots & \boldsymbol{V}_n \\
        \vdots & \vdots & \vdots & \ddots & \vdots \\
        \boldsymbol{V}_n & \boldsymbol{V}_n & \boldsymbol{V}_n & \cdots & \boldsymbol{V}_n
    \end{bmatrix}}^{\boldsymbol{V}\sz{dn}}. \label{two_seq_block_mat}
\end{align}

The conditions guaranteeing the positive definiteness of a block matrix $\bm{Q}$ naturally extend from those for a positive definite matrix with $d=1$.
Specifically, every diagonal block, as well as the Schur complement of each $2 \times 2$ block principal submatrix, need to be positive definite.
Hence, we adopt the following assumption.

\begin{assumption} \label{assum:block-factorizable}
The block-factorizable matrix $\bm{Q}$ given by \eqref{two_seq_block_mat} satisfies $\bm{U}_i \bm{V}_i^\top \succ \bm{0}$ for all $i \in [n]$ and $\bm{U}_i \bm{V}_i^\top- \bm{U}_i \bm{U}_j^{-1} \bm{V}_j \bm{U}_i^\top \succ \bm{0}$, or equivalently $\bm{U}_i \bm{U}_j^{-1} \left( \bm{U}_j \bm{V}_i^\top- \bm{V}_j \bm{U}_i^\top\right) \succ \bm{0}$, for all $1 \leq i < j \leq n$.
\end{assumption}
\noindent Note that $\bm{U}_i \bm{V}_i^\top \succ \bm{0}$ guarantees the nonsingularity of $\bm{U}_i$ and $\bm{V}_i$ as $\det(\bm{U}_i \bm{V}_i^\top) = \det(\bm{U}_i ) \cdot \det(\bm{V}_i) > 0$. Additionally, if the matrices are taken to be singletons, the conditions in Assumption~\ref{assum:block-factorizable} reduce to those in Assumption~\ref{assump:factorable}.

In Proposition~\ref{prop:PD_block}, we demonstrate that Assumption~\ref{assum:block-factorizable} is a necessary and sufficient condition for $\bm{Q} \succ \bm{0}$. Although it has been shown that the inverse of a block tridiagonal matrix is a block-factorizable matrix in \cite{meurant1992review}, a closed-form representation of the inverse of a block-factorizable matrix has not been presented, to the best of our knowledge. Thus, in Proposition \ref{prop:block_factorizable_inverse}, we derive a closed-form expression for the inverse $\bm{Q}^{-1}$, and we express it as a sum of $n$ symmetric rank-$d$ matrices. This representation allows us to extend the results from the factorizable case to the block-factorizable case. We provide the proof of Proposition~\ref{prop:block_factorizable_inverse} in Appendix \ref{Appendix:proof_prop_block_inv} due to its length. 

\begin{proposition} \label{prop:block_factorizable_inverse}
For a nonsingular block-factorizable matrix $\boldsymbol{Q}\sz{dn}$ defined as \eqref{two_seq_block_mat}, its inverse is a block tridiagonal matrix that can be decomposed into $n$ of rank-$d$ matrices 
\begin{align*}
    \boldsymbol{Q}\sz{dn}^{-1} 
    & = \sum_{i=1}^{n} \boldsymbol{\Gamma}_i \boldsymbol{\Delta}_i \boldsymbol{\Gamma}_{i}^\top ,
\end{align*}
where
\begin{gather*}
    \boldsymbol{\Gamma}_i = \boldsymbol{E}_i - \boldsymbol{E}_{i+1} \boldsymbol{\Theta}_{i+1}^\top, \quad  i \in [n-1], \quad \text{ and } \quad \boldsymbol{\Gamma}_n = \boldsymbol{E}_{n}, \\[0.5em]
    \boldsymbol{\Delta}_i = \left( \boldsymbol{U}_i \boldsymbol{V}_i^\top - \boldsymbol{U}_i \boldsymbol{U}_{i+1}^{-1} \boldsymbol{V}_{i+1} \boldsymbol{U}_i^\top \right)^{-1}, \ \boldsymbol{\Theta}_{i+1} = \boldsymbol{U}_{i} \boldsymbol{U}_{i+1}^{-1}, \quad  i \in [n-1], \quad \text{ and } \quad
    \boldsymbol{\Delta}_n = \left( \boldsymbol{U}_n \boldsymbol{V}_n^\top \right)^{-1}.
\end{gather*}
\end{proposition}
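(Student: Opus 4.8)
The plan is to prove the stronger statement that the matrix $\bm{\Gamma}:=[\,\bm{\Gamma}_1\ \bm{\Gamma}_2\ \cdots\ \bm{\Gamma}_n\,]\in\R^{dn\times dn}$ block-diagonalizes $\bm{Q}$ by congruence. Writing $\bm{\Delta}:=\mathrm{blkdiag}(\bm{\Delta}_1,\dots,\bm{\Delta}_n)$, I would first observe that $\bm{\Gamma}$ is block lower-bidiagonal with identity blocks on its diagonal and $-\bm{\Theta}_{i+1}^\top$ in block position $(i+1,i)$, so $\det\bm{\Gamma}=1$, $\bm{\Gamma}$ is nonsingular, and $\sum_{i=1}^{n}\bm{\Gamma}_i\bm{\Delta}_i\bm{\Gamma}_i^\top=\bm{\Gamma}\bm{\Delta}\bm{\Gamma}^\top$. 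Hence it suffices to show $\bm{\Gamma}^\top\bm{Q}\bm{\Gamma}=\bm{\Delta}^{-1}$: the identity $\bm{Q}^{-1}=\bm{\Gamma}\bm{\Delta}\bm{\Gamma}^\top=\sum_i\bm{\Gamma}_i\bm{\Delta}_i\bm{\Gamma}_i^\top$ then follows by inverting the congruence, and block tridiagonality of $\bm{Q}^{-1}$ is immediate from the block-bidiagonal shape of $\bm{\Gamma}$, so this route also recovers the Meurant-type structural result as a by-product.

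To evaluate $\bm{\Gamma}^\top\bm{Q}\bm{\Gamma}$ I would compute its $(k,\ell)$ block $\bm{\Gamma}_k^\top\bm{Q}\bm{\Gamma}_\ell$ directly; the single fact doing all the work is $\bm{\Theta}_{m+1}\bm{U}_{m+1}=\bm{U}_m$. Reading off that the $m$-th block of the block column $\bm{Q}\bm{E}_\ell$ equals $\bm{U}_m\bm{V}_\ell^\top$ if $m\le\ell$ and $\bm{V}_m\bm{U}_\ell^\top$ if $m>\ell$, one gets for $k<\ell$ that $\bm{\Gamma}_k^\top(\bm{Q}\bm{E}_\ell)=\bm{U}_k\bm{V}_\ell^\top-\bm{\Theta}_{k+1}(\bm{U}_{k+1}\bm{V}_\ell^\top)=\bm{0}$, and likewise $\bm{\Gamma}_k^\top(\bm{Q}\bm{E}_{\ell+1})=\bm{0}$; since $\bm{\Gamma}_\ell=\bm{E}_\ell-\bm{E}_{\ell+1}\bm{\Theta}_{\ell+1}^\top$, this yields $\bm{\Gamma}_k^\top\bm{Q}\bm{\Gamma}_\ell=\bm{0}$ for every $k<\ell$. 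Because $\bm{Q}=\bm{Q}^\top$ makes $\bm{\Gamma}^\top\bm{Q}\bm{\Gamma}$ symmetric, all off-diagonal blocks vanish.

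For the diagonal blocks the same identity gives $\bm{\Gamma}_k^\top(\bm{Q}\bm{E}_{k+1})=\bm{0}$, so for $k<n$ only $\bm{\Gamma}_k^\top\bm{Q}\bm{E}_k$ survives: $\bm{\Gamma}_k^\top\bm{Q}\bm{\Gamma}_k=\bm{Q}_{[kk]}-\bm{\Theta}_{k+1}\bm{Q}_{[k+1,k]}=\bm{U}_k\bm{V}_k^\top-\bm{U}_k\bm{U}_{k+1}^{-1}\bm{V}_{k+1}\bm{U}_k^\top=\bm{\Delta}_k^{-1}$, while for $k=n$ one has $\bm{\Gamma}_n^\top\bm{Q}\bm{\Gamma}_n=\bm{Q}_{[nn]}=\bm{U}_n\bm{V}_n^\top=\bm{\Delta}_n^{-1}$. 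Thus $\bm{\Gamma}^\top\bm{Q}\bm{\Gamma}=\mathrm{blkdiag}(\bm{\Delta}_1^{-1},\dots,\bm{\Delta}_n^{-1})$; since $\bm{Q}$ and $\bm{\Gamma}$ are nonsingular, each diagonal block $\bm{\Delta}_i^{-1}$ is nonsingular, so the matrices $\bm{\Delta}_i$ in the statement are well defined, and conjugating back by $\bm{\Gamma}$ finishes the proof.

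I expect the delicate points to be bookkeeping rather than conceptual: keeping the $i\le j$ versus $i>j$ forms of the blocks of the block-factorizable $\bm{Q}$ straight; treating the boundary cases $k=n$ and $\ell=n$ (where $\bm{\Gamma}_n=\bm{E}_n$ and $\bm{\Delta}_n$ has its own formula) separately; and ensuring each $\bm{U}_i$ is invertible, which is already needed for $\bm{\Theta}_{i+1}=\bm{U}_i\bm{U}_{i+1}^{-1}$ to be defined and which follows from $\bm{U}_i\bm{V}_i^\top\succ\bm{0}$ in Assumption~\ref{assum:block-factorizable}. A purely elementary alternative is to verify $\bm{Q}\big(\sum_i\bm{\Gamma}_i\bm{\Delta}_i\bm{\Gamma}_i^\top\big)=\bm{I}$ block by block, exploiting that the sum is block tridiagonal so only three terms contribute to each block of the product; this avoids the congruence observation but is considerably more tedious, which may be why a longer argument is deferred to the appendix.
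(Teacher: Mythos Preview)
Your argument is correct and takes a genuinely different route from the paper. Both proofs start from the same factorization $\sum_i\bm{\Gamma}_i\bm{\Delta}_i\bm{\Gamma}_i^\top=\bm{L}\bm{\Delta}\bm{L}^\top$ with $\bm{L}$ block lower-bidiagonal (your $\bm{\Gamma}$ is the paper's $\bm{L}$), but then diverge: the paper inverts $\bm{L}$ explicitly via the Neumann series $\bm{L}^{-1}=\sum_{k\ge 0}(-1)^k\bar{\bm{L}}^k$, proving nilpotency of $\bar{\bm{L}}=\bm{L}-\bm{I}$ by induction, and then computes the $(i,j)$-block of $\bm{L}^{-\top}\bm{\Delta}^{-1}\bm{L}^{-1}$ through a telescoping sum to recover $\bm{U}_i\bm{V}_j^\top$. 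You instead compute $\bm{\Gamma}^\top\bm{Q}\bm{\Gamma}$ directly and observe that the single identity $\bm{\Theta}_{k+1}\bm{U}_{k+1}=\bm{U}_k$ kills every off-diagonal block in one line; this avoids the inductive claim and the explicit inverse altogether, and is considerably shorter. Your route also cleanly extracts the well-definedness of each $\bm{\Delta}_i$ from the nonsingularity of $\bm{Q}$ alone, whereas the paper implicitly relies on Assumption~\ref{assum:block-factorizable}. The paper's approach does buy an explicit closed form for $\bm{L}^{-1}$ (block entries $\bm{U}_i^{-\top}\bm{U}_j^\top$), which is not needed here but can be useful elsewhere.
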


As we discuss in Remark~\ref{remark:block_factorizable}, any block principal submatrix of a block-factorizable matrix is also block-factorizable, and it has the same properties. Therefore, the rank-$d$ decomposition in Proposition~\ref{prop:block_factorizable_inverse} can be extended to inverses of principal submatrices, $\bm{Q}_S^{-1}$, as stated in Corollary~\ref{cor:block_submat_pd}.

\begin{remark} \label{remark:block_factorizable}
Given a set $S \subseteq [n]$ and a block-factorizable matrix $\boldsymbol{Q}\sz{dn} = \bm{U} \bullet \bm{V}$, the block principal submatrix of $\bm{Q}$ defined by the set $S$ is also a block-factorizable matrix of the form $\bm{Q}_{[S]} = \bm{U}_{[S]} \bullet \bm{V}_{[S]}$, where $\bm{U}_{[S]}$ and $\bm{V}_{[S]}$ are block principal submatrices of $\ \bm{U}$ and $\bm{V}$ induced by $S$, respectively.
\end{remark}

\begin{cor} \label{cor:block_submat_pd}
Given an index set $S = \left\{t_1, \ldots, t_k\right\} \subseteq [n]$ with $1 \leq t_1 < \cdots < t_k \leq n$, define 
\begin{align*}
    \boldsymbol{\Delta}^{S}_{\ell} = \left( \boldsymbol{U}_{t_{\ell}} \boldsymbol{V}_{t_{\ell}}^\top - \boldsymbol{U}_{t_{\ell}} \boldsymbol{U}_{t_{\ell+1}}^{-1} \boldsymbol{V}_{t_{\ell+1}} \boldsymbol{U}_{t_{\ell}}^\top \right)^{-1}, \ \boldsymbol{\Theta}_{\ell+1}^{S} = \boldsymbol{U}_{t_{\ell}} \boldsymbol{U}_{t_{\ell+1}}^{-1}, \quad  \ell \in [k-1], \quad \text{ and } \quad \boldsymbol{\Delta}^{S}_k  = \left( \boldsymbol{U}_{t_k} \boldsymbol{V}_{t_k}^\top \right)^{-1}.
\end{align*}
Then, letting $\ \boldsymbol{\Gamma}_{\ell}^{S} = \boldsymbol{E}_{t_{\ell}} - \boldsymbol{E}_{t_{\ell+1}} \left(\boldsymbol{\Theta}_{\ell+1}^{S}\right)^\top$ for $\ell \in [k-1]$ and $\ \boldsymbol{\Gamma}_k^{S} = \boldsymbol{E}_{t_k}$,
the matrix $\hat{\bm{Q}}_{[S]}^{-1}$ is given by
\begin{align}
    \hat{\boldsymbol{Q}}_{[S]}^{-1} = \sum_{\ell=1}^{k} \boldsymbol{\Gamma}_{\ell}^{S} \boldsymbol{\Delta}_{\ell}^{S} \left( \boldsymbol{\Gamma}_{\ell}^{S} \right)^\top. \label{Qd_inv_rankd}
\end{align}
\end{cor}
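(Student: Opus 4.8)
The plan is to reduce Corollary~\ref{cor:block_submat_pd} to Proposition~\ref{prop:block_factorizable_inverse} by the same mechanism used for the factorizable case (Remark~\ref{rem:submatrix1d} and Corollary~\ref{cor:submat_pd}). First I would invoke Remark~\ref{remark:block_factorizable}: the block principal submatrix $\bm{Q}_{[S]}$ is itself a block-factorizable matrix, generated by the subsequences $\{\bm{U}_{t_\ell}\}_{\ell\in[k]}$ and $\{\bm{V}_{t_\ell}\}_{\ell\in[k]}$, i.e.\ its $(\ell,m)$-th block for $\ell\le m$ is $\bm{U}_{t_\ell}\bm{V}_{t_m}^\top$, exactly the defining form \eqref{two_seq_block_mat} with $n$ replaced by $k$. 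Since $\bm{Q}\sz{dn}\succ\bm{0}$ implies every principal submatrix is positive definite, $\bm{Q}_{[S]}$ is in particular nonsingular, so Proposition~\ref{prop:block_factorizable_inverse} applies to it verbatim.

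Applying Proposition~\ref{prop:block_factorizable_inverse} to the $k$-block matrix $\bm{Q}_{[S]}$ (working in the reduced space $\R^{dk}$, with block-selector matrices $\bm{E}_\ell^{(k)}\in\R^{dk\times d}$) yields
\begin{align*}
    \bm{Q}_{[S]}^{-1} = \sum_{\ell=1}^{k} \bm{\Gamma}_{\ell}^{S,\mathrm{red}}\,\bm{\Delta}_{\ell}^{S}\,\big(\bm{\Gamma}_{\ell}^{S,\mathrm{red}}\big)^\top,
\end{align*}
where $\bm{\Delta}_{\ell}^{S}$ and $\bm{\Theta}_{\ell+1}^{S}$ are precisely the quantities in the statement (the formulas in Proposition~\ref{prop:block_factorizable_inverse}, with index $i\mapsto t_\ell$, $i+1\mapsto t_{\ell+1}$, which is legitimate because those formulas only reference two consecutive generators and not the ambient length), and $\bm{\Gamma}_\ell^{S,\mathrm{red}} = \bm{E}_\ell^{(k)} - \bm{E}_{\ell+1}^{(k)}(\bm{\Theta}_{\ell+1}^S)^\top$ for $\ell\in[k-1]$, $\bm{\Gamma}_k^{S,\mathrm{red}}=\bm{E}_k^{(k)}$. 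The last step is to apply the hat operation, i.e.\ to map this reduced-space identity into $\R^{dn\times dn}$. Under the hat embedding, the $\ell$-th block position of the submatrix corresponds to the $t_\ell$-th block position of the full matrix, so $\bm{E}_\ell^{(k)}$ is carried to $\bm{E}_{t_\ell}\in\R^{dn\times d}$; hence $\bm{\Gamma}_\ell^{S,\mathrm{red}}$ is carried to $\bm{\Gamma}_\ell^S = \bm{E}_{t_\ell} - \bm{E}_{t_{\ell+1}}(\bm{\Theta}_{\ell+1}^S)^\top$. Because $\hat{\bm{Q}}_{[S]}^{-1}$ is by definition the hat-embedding of $\bm{Q}_{[S]}^{-1}$, and the hat embedding is linear (it is conjugation by the inclusion $\R^{dk}\hookrightarrow\R^{dn}$ selecting the blocks in $S$), it commutes with the finite sum, giving \eqref{Qd_inv_rankd}.

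The only genuine content to verify carefully is that the formulas for $\bm{\Delta}_\ell^S$, $\bm{\Theta}_{\ell+1}^S$, and the tridiagonal structure of Proposition~\ref{prop:block_factorizable_inverse} are \emph{local}: the block $\bm{\Delta}_\ell$ depends only on $\bm{U}_i,\bm{V}_i,\bm{U}_{i+1},\bm{V}_{i+1}$ for consecutive indices, so relabeling the consecutive pair $(t_\ell,t_{\ell+1})$ as $(i,i+1)$ in the submatrix is valid; this is the block analogue of the observation following Corollary~\ref{cor:submat_pd} that computing $\delta_\ell^S,\theta_{\ell+1}^S$ needs only $t_\ell,t_{\ell+1}$. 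Since Proposition~\ref{prop:block_factorizable_inverse} is already proved (in Appendix~\ref{Appendix:proof_prop_block_inv}), no new matrix computation is required; the main ``obstacle'' is merely bookkeeping the index translation between the $k$-block submatrix's internal indices and the original $[n]$ indices and confirming the hat operation's compatibility with the rank-$d$ sum, both of which are routine. I would keep the written proof to a few lines: cite Remark~\ref{remark:block_factorizable}, note positive definiteness of submatrices, apply Proposition~\ref{prop:block_factorizable_inverse}, and apply the hat map.
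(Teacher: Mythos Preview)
Your proposal is correct and follows exactly the paper's approach: the paper simply notes (in the sentence preceding the corollary) that by Remark~\ref{remark:block_factorizable} any block principal submatrix $\bm{Q}_{[S]}$ is itself block-factorizable with generators $\{\bm{U}_{t_\ell}\},\{\bm{V}_{t_\ell}\}$, so Proposition~\ref{prop:block_factorizable_inverse} applies directly and the hat embedding yields \eqref{Qd_inv_rankd}. Your write-up is, if anything, more explicit about the index translation and the hat operation than the paper, which treats the result as an immediate corollary without further argument.
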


As in the factorizable case, we observe that $\boldsymbol{\Gamma}_{\ell}^{S}$ and $\boldsymbol{\Delta}_{\ell}^{S}$ are solely determined by the two consecutive indexes $t_{\ell}$ and $t_{\ell+1}$ in $S$. 

\begin{definition} \label{def:consecutive_dd}
Given a block-factorizable matrix $\bm{Q} = \bm{U} \bullet \bm{V} \in \R^{dn \times dn}$ defined by $\{\bm{U}_i\}_{i\in [n]}$ and $\{\bm{V}_i\}_{i\in [n]}$, define rank-$d$ matrices $\bm{\Lambda}_{[i \to j] }$ as
\begin{align*}
    \bm{\Lambda}_{[i\to j]}:=\left( \bm{E}_i - \bm{E}_j \bm{U}_j^{-\top}\bm{U}_i^\top \right) \left( \boldsymbol{U}_i \boldsymbol{V}_i^\top - \boldsymbol{U}_i \boldsymbol{U}_{j}^{-1} \boldsymbol{V}_{j} \boldsymbol{U}_i^\top \right)^{-1} \left( \bm{E}_i - \bm{E}_j \bm{U}_j^{-\top}\bm{U}_i^\top \right)^\top, \qquad 1 \leq i < j \leq n
\end{align*}
and
\begin{align*}
    \bm{\Lambda}_{[i\to n+1]}:=\bm{E}_i \left(\bm{U}_i \bm{V}_i^\top \right)^{-1} \bm{E}_i^\top, \qquad 1 \leq i \leq n.
\end{align*}
\end{definition}

\begin{obs} \label{obs:consecutive_dd}
Given a block-factorizable matrix $\bm{Q}$ and an index set $S\subseteq [n]$, if $\ i<j$ are consecutive elements of $S$, then the rank-$d$ matrix
$\bm{\Lambda}_{[i\to j]}$ appears in the representation \eqref{Qd_inv_rankd} of $\ \hat{\bm{Q}}_{[S]}^{-1}$. Likewise, if $\ i$ is the last element of $S$, then the rank-$d$ matrix $\bm{\Lambda}_{[i\to n+1]}$ appears in the representation \eqref{Qd_inv_rankd} of $\ \hat{\bm{Q}}_{[S]}^{-1}$. Furthermore, computation of $\ {n+1 \choose 2}$ rank-$d$ matrices $\bm{\Lambda}_{[i \to j]}$ are sufficient to represent $\hat{\bm{Q}}_{[S]}^{-1}$ for every $S \subseteq [n]$.
\end{obs}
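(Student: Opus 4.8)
The plan is to read Observation~\ref{obs:consecutive_dd} off directly from the closed-form expansion \eqref{Qd_inv_rankd} in Corollary~\ref{cor:block_submat_pd}, mirroring the argument for the scalar case in Observation~\ref{obs:consecutive1d}. Fix $S=\{t_1,\dots,t_k\}\subseteq[n]$ with $t_1<\cdots<t_k$. Corollary~\ref{cor:block_submat_pd} gives $\hat{\bm{Q}}_{[S]}^{-1}=\sum_{\ell=1}^{k}\boldsymbol{\Gamma}_{\ell}^{S}\boldsymbol{\Delta}_{\ell}^{S}(\boldsymbol{\Gamma}_{\ell}^{S})^\top$, and the first thing to observe is that for $\ell\in[k-1]$ both $\boldsymbol{\Theta}_{\ell+1}^{S}=\bm{U}_{t_\ell}\bm{U}_{t_{\ell+1}}^{-1}$ and $\boldsymbol{\Delta}_{\ell}^{S}=(\bm{U}_{t_\ell}\bm{V}_{t_\ell}^\top-\bm{U}_{t_\ell}\bm{U}_{t_{\ell+1}}^{-1}\bm{V}_{t_{\ell+1}}\bm{U}_{t_\ell}^\top)^{-1}$ depend only on the pair $(t_\ell,t_{\ell+1})$ and not on the remaining elements of $S$; likewise $\boldsymbol{\Delta}_k^{S}=(\bm{U}_{t_k}\bm{V}_{t_k}^\top)^{-1}$ depends only on $t_k$. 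Hence the $\ell$-th summand of \eqref{Qd_inv_rankd} is determined entirely by the pair $(t_\ell,t_{\ell+1})$ (or by $t_k$ for $\ell=k$).

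The next step is the substitution. Setting $i:=t_\ell$, $j:=t_{\ell+1}$ for $\ell\in[k-1]$ and using $(\boldsymbol{\Theta}_{\ell+1}^{S})^\top=(\bm{U}_i\bm{U}_j^{-1})^\top=\bm{U}_j^{-\top}\bm{U}_i^\top$, we get $\boldsymbol{\Gamma}_{\ell}^{S}=\bm{E}_i-\bm{E}_j\bm{U}_j^{-\top}\bm{U}_i^\top$, and plugging this and $\boldsymbol{\Delta}_{\ell}^{S}$ into $\boldsymbol{\Gamma}_{\ell}^{S}\boldsymbol{\Delta}_{\ell}^{S}(\boldsymbol{\Gamma}_{\ell}^{S})^\top$ reproduces verbatim the expression for $\bm{\Lambda}_{[i\to j]}$ in Definition~\ref{def:consecutive_dd}. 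For $\ell=k$, the substitution $i:=t_k$ gives $\boldsymbol{\Gamma}_k^{S}\boldsymbol{\Delta}_k^{S}(\boldsymbol{\Gamma}_k^{S})^\top=\bm{E}_i(\bm{U}_i\bm{V}_i^\top)^{-1}\bm{E}_i^\top=\bm{\Lambda}_{[i\to n+1]}$. Since "$t_\ell,t_{\ell+1}$ are consecutive in $S$" is exactly the indexing in Corollary~\ref{cor:block_submat_pd} and $t_k$ is by definition the last element of $S$, this establishes the first two assertions. For the counting assertion: every summand arising in \eqref{Qd_inv_rankd}, over all choices of $S$, is of the form $\bm{\Lambda}_{[i\to j]}$ with $1\le i<j\le n+1$, and there are exactly $\binom{n+1}{2}$ such pairs; each is realized (take $S=\{i,j\}$ when $j\le n$, or $S=\{i\}$ when $j=n+1$). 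Therefore precomputing these $\binom{n+1}{2}$ matrices suffices, and $\hat{\bm{Q}}_{[S]}^{-1}$ is assembled for any $S$ by summing the at most $k$ relevant ones.

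The main (and only) obstacle is bookkeeping in the non-commutative block setting: one must track the order of the matrix factors and the transposes — in particular that $\bm{U}_j^{-\top}:=(\bm{U}_j^\top)^{-1}=(\bm{U}_j^{-1})^\top$, that $\boldsymbol{\Delta}_{\ell}^{S}$ is symmetric under Assumption~\ref{assum:block-factorizable} so that $\boldsymbol{\Gamma}\boldsymbol{\Delta}\boldsymbol{\Gamma}^\top$ is genuinely symmetric, and that the Schur-complement expression inside $\boldsymbol{\Delta}_{\ell}^{S}$ literally matches the one inside $\bm{\Lambda}_{[i\to j]}$. Beyond this matching exercise against Definition~\ref{def:consecutive_dd}, no new computation is required; the statement is the exact block analog of Observation~\ref{obs:consecutive1d}, obtained by replacing scalars $u_i,v_i$ with blocks $\bm{U}_i,\bm{V}_i$, divisions with left/right multiplications by inverses, and $\bm{e}_i$ with $\bm{E}_i$.
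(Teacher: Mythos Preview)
Your proposal is correct and matches the paper's approach: the paper states Observation~\ref{obs:consecutive_dd} without proof, treating it as an immediate consequence of Corollary~\ref{cor:block_submat_pd} and Definition~\ref{def:consecutive_dd}, which is precisely the matching exercise you carry out. Your write-up simply makes explicit the identification $\boldsymbol{\Gamma}_{\ell}^{S}\boldsymbol{\Delta}_{\ell}^{S}(\boldsymbol{\Gamma}_{\ell}^{S})^\top=\bm{\Lambda}_{[t_\ell\to t_{\ell+1}]}$ (and $\bm{\Lambda}_{[t_k\to n+1]}$ for $\ell=k$) that the paper leaves to the reader.
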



\begin{example} \label{eg:block_inv}
Consider a block-factorizable matrix $\boldsymbol{Q} \in \R^{(2\cdot 4) \times (2 \cdot 4)}$ defined by two block matrices $\bm{U}, \bm{V} \in \R^{(2\cdot 4) \times (2 \cdot 4)}$ given as 
\begin{align*}
    \overbrace{\left[\begin{array}{cc|cc|cc|cc}
        5 & 6 & 4 & 5 & 3 & 4 & 2 & 3 \\
        6 & 11 & 5 & 9 & 4 & 7 & 3 & 5 \\ \hline
        4 & 5 & 8 & 10 & 6 & 8 & 4 & 6 \\
        5 & 9 & 10 & 18 & 8 & 14 & 6 & 10 \\ \hline
        3 & 4 & 6 & 8 & 12 & 16 & 8 & 12 \\
        4 & 7 & 8 & 14 & 16 & 28 & 12 & 20 \\ \hline
        2 & 3 & 4 & 6 & 8 & 12 & 16 & 24 \\
        3 & 5 & 6 & 10 & 12 & 20 & 24 & 40 
    \end{array}\right]}^{\boldsymbol{Q}} = \overbrace{\left[\begin{array}{cc|cc|cc|cc}
         1 & 1 & 1 & 1 & 1 & 1 & 1 & 1 \\
         1 & 2 & 1 & 2 & 1 & 2 & 1 & 2 \\ \hline
         1 & 1 & 2 & 2 & 2 & 2 & 2 & 2 \\
         1 & 2 & 2 & 4 & 2 & 4 & 2 & 4 \\ \hline
         1 & 1 & 2 & 2 & 4 & 4 & 4 & 4 \\
         1 & 2 & 2 & 4 & 4 & 8 & 4 & 8 \\ \hline
         1 & 1 & 2 & 2 & 4 & 4 & 8 & 8 \\
         1 & 2 & 2 & 4 & 4 & 8 & 8 & 16
    \end{array}\right]}^{\boldsymbol{U}} \bullet \overbrace{\left[\begin{array}{cc|cc|cc|cc}
         4 & 1 & 3 & 1 & 2 & 1 & 1 & 1 \\
         1 & 5 & 1 & 4 & 1 & 3 & 1 & 2 \\ \hline
         3 & 1 & 3 & 1 & 2 & 1 & 1 & 1 \\
         1 & 4 & 1 & 4 & 1 & 3 & 1 & 2 \\ \hline
         2 & 1 & 2 & 1 & 2 & 1 & 1 & 1 \\
         1 & 3 & 1 & 3 & 1 & 3 & 1 & 2 \\ \hline
         1 & 1 & 1 & 1 & 1 & 1 & 1 & 1 \\
         1 & 2 & 1 & 2 & 1 & 2 & 1 & 2 
    \end{array}\right]}^{\boldsymbol{V}}.
\end{align*}
The rank-$d$ matrix $\bm{\Lambda}_{[i \to j]}$ for $1 \leq i < j \leq 3$ can be computed as follows:
\begin{align*}
    \bm{\Lambda}_{[1 \to 2]} &= \frac{2}{29} \left(\bm{E}_1 - \frac{1}{2}\bm{E}_2 \right) \begin{pmatrix}
         13 & -7 \\ -7 & 6
    \end{pmatrix} \left(\bm{E}_1 - \frac{1}{2}\bm{E}_2 \right)^\top\\[2pt]
    \bm{\Lambda}_{[2 \to 3]} &= \frac{1}{19} \left(\bm{E}_2 - \frac{1}{2}\bm{E}_3 \right) \begin{pmatrix}
         11 & -6 \\ -6 & 5
    \end{pmatrix} \left(\bm{E}_2 - \frac{1}{2}\bm{E}_3 \right)^\top\\[2pt]
    \bm{\Lambda}_{[1 \to 3]} &= \frac{4}{229} \left(\bm{E}_1 - \frac{1}{4}\bm{E}_3 \right) \begin{pmatrix}
         37 & -20 \\ -20 & 17
    \end{pmatrix} \left(\bm{E}_1 - \frac{1}{4}\bm{E}_3 \right)^\top.
\end{align*}
\end{example}

We now present a compact representation of the closed convex hull of $X_{\bm{Q}}^B$ for a block-factorizable matrix $\bm{Q}$. Note that $P_{\boldsymbol{Q}}$ for a factorizable matrix is a special case of $P_{\boldsymbol{Q}\sz{dn}}^B$ with $d=1$. Therefore, the construction of $P_{\boldsymbol{Q}\sz{dn}}^B$ is essentially the same as that of $P_{\boldsymbol{Q}}$, which can be obtained only by replacing \eqref{inv_decomp_1d} with 
\begin{align}
    \boldsymbol{W} = \sum_{i=1}^{n} \sum_{j=i+1}^{n+1}  \boldsymbol{\Lambda}_{[i\to j]} w_{ij}.\label{inv_decomp_dd}
\end{align}
Following the same reasoning, a closed convex hull representation of $X_{\bm{Q}}^B$ for a block-factorizable matrix $\bm{Q}$ is obtained as in Theorem~\ref{cor:conv_block}.

\begin{thm} \label{cor:conv_block}
Given a block-factorizable matrix $\bm{Q}=\bm{U}\bullet \bm{V}$, 
\begin{equation}
    \begin{aligned}
        \cl \conv \left( X_{\boldsymbol{Q}}^B\right)=\Bigg\{&(\bm{x},\bm{z},\tau) \in \R^{(d+1)n+1}: \\
        & \quad \exists \bm{W}\in \R^{dn\times dn}, \  \bm{w} \in \mathbb{R}_+^{(n+1)(n+2)/2} \text{ such that } \begin{bmatrix}
            \boldsymbol{W} & \boldsymbol{x}\\
            \boldsymbol{x}^\top & \tau
        \end{bmatrix} \succeq \bm{0},\; \eqref{spp_flow_1d}, \eqref{spp_z_1d}, \eqref{eq:path_nonneg}, \eqref{inv_decomp_dd}\Bigg\}.
    \end{aligned} \label{convX_block_sdp}
\end{equation}
\end{thm}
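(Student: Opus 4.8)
The plan is to reduce Theorem~\ref{cor:conv_block} entirely to Theorem~\ref{cor:conv_singleton} (equivalently, to the pieces already established for the factorizable case), using the fact that the block-factorizable setting differs from the factorizable one only in the identity of the rank-one/rank-$d$ matrices attached to the arcs of the DAG $\mathcal{G}$. Concretely, I would proceed as follows. First, invoke Proposition~\ref{prop:hulldd}: it already gives
\[
\cl\conv\!\left(X_{\bm{Q}}^B\right)=\left\{(\bm{x},\bm{z},\tau):\exists\,\bm{W}\text{ with }\begin{bmatrix}\bm{W}&\bm{x}\\\bm{x}^\top&\tau\end{bmatrix}\succeq\bm{0},\ (\bm{z},\bm{W})\in P_{\bm{Q}}^B\right\},
\]
where $P_{\bm{Q}}^B=\conv\big(\{(\bm{e}_S,\hat{\bm{Q}}_{[S]}^{-1})\}_{S\subseteq[n]}\big)$. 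So the theorem reduces to showing that $P_{\bm{Q}}^B$ equals the projection onto $(\bm{z},\bm{W})$ of the polyhedron defined by \eqref{spp_flow_1d}, \eqref{spp_z_1d}, \eqref{eq:path_nonneg}, \eqref{inv_decomp_dd}.

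Second, I would establish the block analogues of Propositions~\ref{prop:valid_1d} and \ref{prop:integral} with the substitution of \eqref{inv_decomp_1d} by \eqref{inv_decomp_dd}. For validity: given $\bm{z}\in\{0,1\}^n$ with support $S$, the flow constraints \eqref{spp_flow_1d} together with \eqref{spp_z_1d} force $\bm{w}$ to be the indicator vector of the unique $0$–$(n+1)$ path visiting exactly the nodes in $S$, so $w_{ij}=1$ iff $i,j$ are consecutive in $S$ (with the convention that the last element of $S$ connects to $n+1$); this is purely the combinatorics of $\mathcal{G}$ and is unchanged from the factorizable case. Then by Observation~\ref{obs:consecutive_dd}, constraint \eqref{inv_decomp_dd} yields exactly $\bm{W}=\sum_{\ell}\bm{\Gamma}_\ell^S\bm{\Delta}_\ell^S(\bm{\Gamma}_\ell^S)^\top=\hat{\bm{Q}}_{[S]}^{-1}$ by Corollary~\ref{cor:block_submat_pd}. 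For integrality: project out $\bm{z}$ and $\bm{W}$ via the equalities \eqref{spp_z_1d} and \eqref{inv_decomp_dd}, leaving only $\bm{w}$ subject to the network-flow constraints \eqref{spp_flow_1d} and nonnegativity \eqref{eq:path_nonneg}, whose constraint matrix is totally unimodular; hence every extreme point is integral in $\bm{w}$, and \eqref{spp_z_1d} transfers integrality to $\bm{z}$. Combining the two, $P_{\bm{Q}}^B=\{(\bm{z},\bm{W}):\exists\,\bm{w}\text{ satisfying }\eqref{spp_flow_1d},\eqref{spp_z_1d},\eqref{eq:path_nonneg},\eqref{inv_decomp_dd}\}$, and substituting into Proposition~\ref{prop:hulldd} gives \eqref{convX_block_sdp}.

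The only genuinely new ingredient relative to \S\ref{subsec:convexHull} is Observation~\ref{obs:consecutive_dd} — i.e., the fact that the rank-$d$ summand $\bm{\Gamma}_\ell^S\bm{\Delta}_\ell^S(\bm{\Gamma}_\ell^S)^\top$ attached to a consecutive pair $t_\ell<t_{\ell+1}$ in $S$ depends only on that pair and not on the rest of $S$, so that $\bm{\Lambda}_{[i\to j]}$ is well-defined and the $\binom{n+1}{2}$ such matrices suffice. This in turn rests on the closed-form decomposition in Corollary~\ref{cor:block_submat_pd} (hence ultimately on Proposition~\ref{prop:block_factorizable_inverse}), all of which are available to us. I expect the main obstacle — such as it is — to be purely bookkeeping: verifying that the two-index locality survives the hat operation for block principal submatrices and that the padding conventions $\hat{(\cdot)}_{[S]}$ are compatible with summing the $\bm{E}_{t_\ell}$-based rank-$d$ blocks, since the block notation obscures what is conceptually identical to the scalar case. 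Once Observation~\ref{obs:consecutive_dd} is in hand, the proof is a line-by-line transcription of the arguments of Propositions~\ref{prop:valid_1d} and \ref{prop:integral} and an appeal to Proposition~\ref{prop:hulldd}, so I would write it tersely, pointing to \S\ref{subsec:convexHull} for the repeated steps.
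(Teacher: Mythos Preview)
Your proposal is correct and follows essentially the same approach as the paper. The paper itself does not give a standalone proof of Theorem~\ref{cor:conv_block}; it simply observes that the construction of $P_{\bm{Q}}^B$ is identical to that of $P_{\bm{Q}}$ once \eqref{inv_decomp_1d} is replaced by \eqref{inv_decomp_dd} (relying on Observation~\ref{obs:consecutive_dd} and Corollary~\ref{cor:block_submat_pd}), and then states that the result follows ``by the same reasoning'' via Proposition~\ref{prop:hulldd} --- exactly the reduction you outline.
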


\section{Algorithms}\label{sec:algorithm}

In \S\ref{sec:convexification} we showed how mixed-integer optimization problems with (block-) factorable matrices can be reformulated as convex optimization problems using a polynomially-sized extended formulation. In particular, problems~\eqref{MIQP-ddim} and \eqref{MIQP-block} without the side constraints can be directly reformulated as a convex optimization problem. While such convex optimization problems theoretically can be solved efficiently (in polynomial time), practical challenges arise when handling positive semi-definite constraints involving matrices of size $dn\times dn$, especially when $n$ is moderately large. In this section, we discuss how to utilize the proposed relaxations more effectively. First, in \S\ref{sec:dynamicProgramming}, 
we show that for a problem without side constraints, the convex relaxation leads to a natural combinatorial algorithm by reducing the problem to a shortest path problem on a directed acyclic graph. Then, in \S\ref{sec:socp}, we address the side constraints and show that the proposed relaxations are SOCP-representable. This allows the use of off-the-shelf mixed-integer conic quadratic solvers, which, in practice, achieve provably optimal solutions by requiring significantly fewer branch-and-bound nodes compared to standard big-M relaxations.

\subsection{Shortest path algorithm}\label{sec:dynamicProgramming}
In this section, we present a combinatorial approach to solve the SDP formulation of the MIQP \eqref{MIQP-block} with $\Tilde{C} = \R^{dn} \times \R^n$:
\begin{equation}
    \begin{aligned}
        \min \ \ & \tau + \bm{a}^\top \bm{x} + \bm{c}^\top \bm{z} \\
        \text{s.t. } \ & (\bm{x}, \bm{z}, \tau) \in \cl \conv \left(X_{\bm{Q}}^B\right),
    \end{aligned} \label{SDP}
\end{equation}
where $\cl \conv \left(X_{\bm{Q}}^B \right)$ is defined as \eqref{convX_block_sdp}.
In this section we consider the general block-factorable case, and results for the factorable case can be recovered by setting $d=1.$ 

Note that since relaxation \eqref{SDP} describes the convex hull of the mixed-integer problem \eqref{MIQP-block}, we find that $\bm{z^*}\in \{0,1\}^n$ in extreme optimal solutions. 
Due to Proposition~\ref{prop:valid_1d}, which naturally extends to the block-factorizable case by replacing \eqref{inv_decomp_1d} with \eqref{inv_decomp_dd}, 
we find that $\bm{W^*}=\hat{\bm{Q}}_{[S]}^{-1}$, where $S$ is the support of $\bm{z^*}$. The positive definite constraint implies $\bm{x}_{[i]}=0$ for $i\not \in S$ and reduces to $\begin{pmatrix}\bm{Q}_{[S]}^{-1}&\bm{x}_{[S]}\\[2pt] \bm{x}_{[S]}^\top & \tau
\end{pmatrix}\succeq \bm{0}\Leftrightarrow \tau\geq \bm{x}_{[S]}^\top\bm{Q}_{[S]}\bm{x}_{[S]}$. 
Finally, projecting out $\tau$, we deduce from the first-order optimality conditions that $\bm{x}_{[S]}=-\frac{1}{2}\bm{Q}_{[S]}^{-1}\bm{a}_{[S]}$. Then, projecting out variable $\bm{x}=-\frac{1}{2}\bm{W}\bm{a}$ (capturing both zero and non-zero values of $\bm{x}$), we find that \eqref{SDP} reduces to

\begin{subequations}
    \begin{align}
        \min \ & -\frac{1}{4} \bm{a}^\top \bm{W} \bm{a} + \bm{c}^\top \bm{z} \label{linear_obj}\\
        \text{ s.t. } & (\bm{z}, \bm{W}) \in P_{\bm{Q}}^B. \label{linear_cvx_constr}
    \end{align}\label{linear_form}
\end{subequations}

We now show that an optimal solution of \eqref{linear_form} can be obtained by solving a shortest path problem on a directed acyclic graph.

\begin{proposition} \label{prop:DP}
Problem \eqref{linear_form} can be solved in $\mathcal{O}(n^2\cdot \pi(d))$ as a shortest path problem on a directed acyclic graph $\mathcal{G} = (V,A)$, where $V = \{0,\ldots,n+1\}$ and $A=\{(i,j): 0 \leq i < j \leq n+1\}$, with arc costs $\ell_{ij}$ given as 
\begin{align}
\ell_{ij} = 
    \begin{cases}
         0, &  i = 0, \ 1 \leq j \leq n+1\\
        c_i - \frac{1}{4} \boldsymbol{a}^\top \bm{\Lambda}_{[i \to j]}\boldsymbol{a},  &  1\leq i < j \leq n+1,
    \end{cases} \label{SPP_arc}
\end{align}
where $\pi(d)$ indicates the computational complexity of  multiplication of two $d \times d$ matrices and inversion of a $d \times d$ matrix.
\end{proposition}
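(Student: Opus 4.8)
The plan is to show that \eqref{linear_form} is exactly a shortest path problem on $\mathcal{G}$, with the stated arc costs, and then bound the running time. First I would invoke the characterization $P_{\bm{Q}}^B = \{(\bm{z},\bm{W}) : \exists \bm{w}\geq \bm{0}\text{ satisfying } \eqref{spp_flow_1d}, \eqref{spp_z_1d}, \eqref{inv_decomp_dd}\}$, which follows from the block-factorizable analogues of Propositions~\ref{prop:valid_1d} and \ref{prop:integral} (the proofs carry over verbatim after replacing $\bm{\Lambda}_{i\to j}$ by $\bm{\Lambda}_{[i\to j]}$ and \eqref{inv_decomp_1d} by \eqref{inv_decomp_dd}). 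Substituting $\bm{W}=\sum_{i=1}^n\sum_{j=i+1}^{n+1}\bm{\Lambda}_{[i\to j]}w_{ij}$ and $z_\ell = \sum_{i=0}^{\ell-1}w_{i\ell}$ into the objective \eqref{linear_obj} turns it into a linear function of $\bm{w}$:
\begin{align*}
-\tfrac14\bm{a}^\top\bm{W}\bm{a} + \bm{c}^\top\bm{z}
= \sum_{i=1}^n\sum_{j=i+1}^{n+1}\Bigl(c_i - \tfrac14\bm{a}^\top\bm{\Lambda}_{[i\to j]}\bm{a}\Bigr)w_{ij}
= \sum_{(i,j)\in A}\ell_{ij}\,w_{ij},
\end{align*}
where the arcs $(0,j)$ have cost $0$ since they contribute neither to $\bm{z}$ (the sum $\sum_{i=0}^{\ell-1}w_{i\ell}$ uses $w_{0\ell}$ only to set $z_\ell$, whose cost $c_\ell$ is already attributed to arcs leaving $\ell$) nor to $\bm{W}$. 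Thus \eqref{linear_form} becomes $\min\{\sum_{(i,j)\in A}\ell_{ij}w_{ij} : \bm{w}\geq\bm{0},\ \eqref{spp_flow_1d}\}$, which is precisely the LP relaxation of the shortest $0$-to-$(n+1)$ path problem on $\mathcal{G}$; since the constraint matrix of \eqref{spp_flow_1d} is the incidence matrix of a digraph and hence totally unimodular, an optimal extreme point is integral and corresponds to a single $0$-$(n+1)$ path (the balance constraints force exactly one unit of flow along such a path in a DAG).

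One subtlety I would address carefully: the shortest path LP is bounded only if $\mathcal{G}$ has no negative-cost cycle, but $\mathcal{G}$ is acyclic by construction ($A$ only contains arcs $(i,j)$ with $i<j$), so this is automatic; moreover, since $\bm{Q}\succ\bm{0}$ implies (via Assumption~\ref{assum:block-factorizable} and the fact that $\hat{\bm{Q}}_{[S]}^{-1}$ is a genuine PSD matrix for every $S$) that the original problem \eqref{SDP} is bounded below, the reduced problem is bounded below as well, so an optimal path exists. Conversely, any $0$-$(n+1)$ path in $\mathcal{G}$ visits a subset $S\subseteq[n]$ of intermediate nodes in increasing order, and by Observation~\ref{obs:consecutive_dd} the associated $\bm{W}=\sum\bm{\Lambda}_{[i\to j]}$ equals $\hat{\bm{Q}}_{[S]}^{-1}$, so every path yields a feasible point of \eqref{linear_form} with matching objective value; hence the two optimal values coincide.

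For the complexity bound, computing all $\binom{n+1}{2}=\mathcal{O}(n^2)$ matrices $\bm{\Lambda}_{[i\to j]}$ via Definition~\ref{def:consecutive_dd} requires, per arc, a constant number of $d\times d$ matrix multiplications and one $d\times d$ inversion, i.e.\ $\mathcal{O}(\pi(d))$ each; evaluating each arc cost $\ell_{ij}=c_i-\tfrac14\bm{a}^\top\bm{\Lambda}_{[i\to j]}\bm{a}$ costs $\mathcal{O}(d^2)=\mathcal{O}(\pi(d))$ given $\bm{\Lambda}_{[i\to j]}$. Actually, rather than forming $\bm{\Lambda}_{[i\to j]}$ explicitly one can compute $\bm{a}^\top\bm{\Lambda}_{[i\to j]}\bm{a}$ directly from Definition~\ref{def:consecutive_dd} at cost $\mathcal{O}(\pi(d))$ — the point is only that each of the $\mathcal{O}(n^2)$ arc costs is obtained in $\mathcal{O}(\pi(d))$ time. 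Finally, the shortest path in a DAG with $n+2$ nodes and $\mathcal{O}(n^2)$ arcs is found by a single pass in topological order in $\mathcal{O}(n^2)$ additions and comparisons. Summing, the total work is $\mathcal{O}(n^2\cdot\pi(d))$, as claimed. The main obstacle I anticipate is purely bookkeeping: making the correspondence between paths and index sets $S$ airtight — in particular handling the terminal arc $(i,n+1)$ and the initial arc $(0,i)$ so that the node costs $c_i$ are charged exactly once — which the chosen convention of attributing $c_i$ to arcs leaving node $i$ (and giving arcs out of $0$ zero cost) resolves cleanly.
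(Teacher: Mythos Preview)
Your proposal is correct and follows essentially the same route as the paper: substitute \eqref{inv_decomp_dd} for $\bm{W}$ and the path-flow expression for $\bm{z}$ into the objective \eqref{linear_obj}, obtain a linear functional of $\bm{w}$ over the network-flow polytope \eqref{spp_flow_1d}, recognize this as the shortest $0$--$(n{+}1)$ path LP on the DAG, and then count $\mathcal{O}(n^2)$ arc-cost computations at $\mathcal{O}(\pi(d))$ each plus $\mathcal{O}(n^2)$ for the topological-order shortest path. The only point to tighten is your substitution step: you start from $z_\ell=\sum_{i=0}^{\ell-1}w_{i\ell}$ (incoming arcs) but your displayed identity attributes $c_i$ to outgoing arcs $w_{ij}$, $j>i$; this relies on the flow-balance equality $\sum_{i=0}^{\ell-1}w_{i\ell}=\sum_{j=\ell+1}^{n+1}w_{\ell j}$ for $\ell\in[n]$, which the paper states explicitly and you should too.
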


\noindent The optimal solution $(\bm{x}^*, \bm{z}^*, \tau^*)$ of \eqref{SDP} can be recovered from the optimal path. Letting $\mathcal{P}$ be the set of arcs $(i,j) \in A$ on the path, $z_i^* = 1$ if and only if the vertex $i$ is visited by the path, and $\bm{x}^* = - \frac{1}{2} \left(\sum_{(i,j) \in \mathcal{P}} \bm{\Lambda}_{[i \to j]}\right) \bm{a}$.
We refer to this solution method as the \textit{shortest path problem (SPP) algorithm}. 

\begin{proof}[Proof of Proposition~\ref{prop:DP}]
The objective value \eqref{linear_obj} at $(\boldsymbol{z}, \boldsymbol{W})$ can be computed as follows: 
\begin{subequations}
    \begin{align}
    -\frac{1}{4} \boldsymbol{a}^\top \bm{W}^{-1} \boldsymbol{a} + \boldsymbol{c}^\top \boldsymbol{z} & = -\frac{1}{4} \boldsymbol{a}^\top \left( \sum_{i=1}^{n} \sum_{j=i+1}^{n+1} \bm{\Lambda}_{[i \to j]} w_{ij}\right) \boldsymbol{a} + \sum_{i=1}^n c_i z_i \label{spp_eq1} \\
    & = \sum_{i=1}^{n} \sum_{j=i+1}^{n+1} \left( c_i -\frac{1}{4} \boldsymbol{a}^\top \bm{\Lambda}_{[i \to j]} \boldsymbol{a} \right)  w_{ij} \label{spp_eq2} \\
    & = \sum_{(i,j) \in A} \ell_{ij} w_{ij} \label{spp_eq3}
\end{align} \label{spp_proof}%
\end{subequations}
where \eqref{spp_eq1} holds due to \eqref{inv_decomp_dd}, and \eqref{spp_eq2} holds as $z_i = \sum_{j=0}^{i-1} w_{ji} = \sum_{j=i+1}^{n+1} w_{ij}$, $ i \in [n]$. The last equation \eqref{spp_eq3} follows from the construction of $\ell_{ij}$ specified in \eqref{SPP_arc}. Therefore, an optimal solution of \eqref{linear_form} can be obtained by solving a shortest path problem on $G$ with costs given in \eqref{SPP_arc}.
The computation of the costs involves $\mathcal{O}(n^2)$ $d \times d$ matrix multiplications and inversions.
Once the costs are computed, the shortest path problem on the directed acyclic graph $G$ can be solved in $\mathcal{O}(|V|+|A|) = \mathcal{O}(n^2)$ utilizing the topological order \cite{cormen2001single}.
\end{proof}

\begin{cor} \label{cor:DP-singleton}
The problem \eqref{MIQP-block} with a factorizable matrix $\boldsymbol{Q} \in \R^n$ can be solved in $\mathcal{O}(n^2)$ as a shortest path problem on $G$ with the arc cost $\ell_{ij}$, $ (i,j) \in A$, assigned as
\begin{align}
\ell_{ij} = 
    \begin{cases}
        0, &  i = 0, \ 1 \leq j \leq n+1\\
         c_i - \frac{u_j \left(a_i - \frac{u_i}{u_j}a_j\right)^2}{4 u_i (u_j v_i - u_i v_j )} ,  &  1\leq i < j \leq n\\[3pt]
         c_i - \frac{a_i^2}{4 u_i v_i} ,  &  1\leq i \leq n, \ j = n+1.
    \end{cases} \label{SPP_arc_singleton}
\end{align}
\end{cor}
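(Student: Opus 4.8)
The plan is to derive this corollary as the $d=1$ specialization of Proposition~\ref{prop:DP}, so almost all of the work is already done. First I would note that for scalar blocks $X_{\bm{Q}}^B$ is literally $X_{\bm{Q}}$ and the block-factorizable $\bm{Q}$ of \eqref{two_seq_block_mat} is the factorizable $\bm{Q}$ of \eqref{two_seq_mat_repeat}; hence Theorem~\ref{cor:conv_singleton} (equivalently Theorem~\ref{cor:conv_block} at $d=1$) describes $\cl\conv(X_{\bm{Q}})$ through the path system \eqref{eq:path}. The reduction carried out before Proposition~\ref{prop:DP} --- eliminate $\tau$ via the PSD block, use $\bm{W}^*=\hat{\bm{Q}}_S^{-1}$ from Proposition~\ref{prop:valid_1d}, and invoke first-order optimality to get $\bm{x}=-\tfrac12\bm{W}\bm{a}$ --- then applies verbatim (it never used $d>1$), yielding the linear-objective problem \eqref{linear_form} over $P_{\bm{Q}}$, which Proposition~\ref{prop:DP} already identifies with a shortest-path problem on $\mathcal{G}$ with arc costs \eqref{SPP_arc}.

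The only remaining task is to make the arc costs $\ell_{ij}=c_i-\tfrac14\bm{a}^\top\bm{\Lambda}_{i\to j}\bm{a}$ explicit in the scalar case by substituting Definition~\ref{def:consecutive1d}. For $1\le i<j\le n$ the matrix $\bm{\Lambda}_{i\to j}=\tfrac{u_j}{u_i(u_jv_i-u_iv_j)}\bigl(\bm{e}_i-\tfrac{u_i}{u_j}\bm{e}_j\bigr)\bigl(\bm{e}_i-\tfrac{u_i}{u_j}\bm{e}_j\bigr)^\top$ is rank one, so $\bm{a}^\top\bm{\Lambda}_{i\to j}\bm{a}=\tfrac{u_j}{u_i(u_jv_i-u_iv_j)}\bigl(a_i-\tfrac{u_i}{u_j}a_j\bigr)^2$, which is the middle line of \eqref{SPP_arc_singleton}; for $j=n+1$, $\bm{\Lambda}_{i\to n+1}=\tfrac{1}{u_iv_i}\bm{e}_i\bm{e}_i^\top$ gives $\bm{a}^\top\bm{\Lambda}_{i\to n+1}\bm{a}=a_i^2/(u_iv_i)$, the last line; and the arcs leaving the source $0$ keep cost $0$ as in \eqref{SPP_arc}. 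For the running time, at $d=1$ every ``$d\times d$ inversion/multiplication'' in Proposition~\ref{prop:DP} is a constant-time scalar operation, so $\pi(1)=\mathcal{O}(1)$: all $\binom{n+1}{2}$ arc costs are computed in $\mathcal{O}(n^2)$, and the shortest $0$--$(n+1)$ path on the acyclic $\mathcal{G}$ is found in $\mathcal{O}(|V|+|A|)=\mathcal{O}(n^2)$ by topological order; the recovery of $(\bm{x}^*,\bm{z}^*,\tau^*)$ specializes accordingly.

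There is no genuine obstacle here: the statement is an unpacking of Proposition~\ref{prop:DP}. The only points that warrant care are confirming the pre--Proposition~\ref{prop:DP} reduction transfers unchanged to $d=1$ (it does, since Proposition~\ref{prop:valid_1d} is stated in exactly that setting) and keeping the elementary rank-one algebra free of sign slips in the denominator $u_jv_i-u_iv_j$, which Assumption~\ref{assump:factorable} guarantees is nonzero and of the correct sign.
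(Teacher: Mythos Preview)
Your proposal is correct and matches the paper's approach: the corollary is stated immediately after Proposition~\ref{prop:DP} with no separate proof, precisely because it is the $d=1$ specialization you describe, obtained by plugging Definition~\ref{def:consecutive1d} into the arc costs \eqref{SPP_arc} and noting $\pi(1)=\mathcal{O}(1)$.
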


\subsection{SOCP formulation} \label{sec:socp}
The shortest path algorithm discussed in \S\ref{sec:dynamicProgramming} cannot be extended naturally to problems with additional constraints. To handle such problems, branch-and-bound algorithms based on the relaxations derived in \S\ref{sec:convexification} may be utilized along with the additional constraints. While relaxation \eqref{convX_block_sdp} may be perceived as practically challenging due to the positive semi-definite constraint, we point out that matrix $\bm{W}$ in \eqref{inv_decomp_dd} is constrained to be a non-negative sum of given positive semidefinite matrices. As such, a result from Nesterov and Nemirovskii \cite[pp.~227--229]{nesterov1994interior} allows us to reformulate the problem as a conic quadratic optimization problem involving matrices $\bm{\Phi}_{[i\to j]}$ such that $\bm{\Lambda}_{[i\to j]}=\bm{\Phi}_{[i\to j]}\bm{\Phi}_{[i\to j]}^\top$. 
From the construction of matrices $\bm{\Lambda}_{[i \to j]}$ in Definition~\ref{def:consecutive_dd}, it holds 
$$\bm{\Phi}_{[i \to j]} = 
\begin{cases}
\left(\bm{E}_i - \bm{E}_j \bm{U}_j^{-\top} \bm{U}_i^\top \right) \left( \bm{U}_i \bm{V}_i^\top - \bm{U}_i \bm{U}_j^{-1} \bm{V}_j \bm{U}_i^\top \right)^{-1/2} & \text{for } 1 \leq i < j \leq n, \\ 
 \bm{E}_i \left(\bm{U}_i \bm{V}_i^\top \right)^{-1/2} & \text{for } 1 \leq i \leq n, \ j = n+1.
\end{cases}
$$.
\begin{proposition} \label{cor:socp-block}
The closed convex hull of $X_{\bm{Q}}^B$ can be written as follows:
\begin{align*}
    \cl \conv \left( X_{\bm{Q}}^B\right) = \Bigg\{&(\bm{x},\bm{z},\tau) \in \R^{(d+1)n+1}: \exists  w_{ij}, \tau_{ij} \in \R_+ \text{ and } \bm{h}_{ij} \in \R^{d} \text{ for } \ 0 \leq i < j \leq n+1, \text{ such that } \\
    & \    \eqref{spp_flow_1d}, \eqref{spp_z_1d}, \ \tau \geq \sum_{i=1}^n \sum_{j=i+1}^{n+1} \tau_{ij}, \ \sum_{i=1}^n \sum_{j=i+1}^{n+1}  \bm{\phi}_{[i \to j]}\boldsymbol{h}_{ij} = \boldsymbol{x}, \  \left\|\boldsymbol{h}_{ij}\right\|_2^2 \leq \tau_{ij} w_{ij}, \  1 \leq i < j \leq n+1\Bigg\} \cdot  
\end{align*}
\end{proposition}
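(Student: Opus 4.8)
The plan is to derive this SOCP representation directly from Theorem~\ref{cor:conv_block} by applying the perspective/epigraph reformulation of Nesterov and Nemirovskii to the positive semidefinite constraint appearing in \eqref{convX_block_sdp}. Recall that in \eqref{convX_block_sdp} the matrix $\bm{W}$ is tied to the $\bm{w}$ variables through \eqref{inv_decomp_dd}, namely $\bm{W} = \sum_{i<j} \bm{\Lambda}_{[i\to j]} w_{ij}$, and the coupling with $\bm{x}$ and $\tau$ is the Schur-complement condition $\begin{bmatrix}\bm{W} & \bm{x}\\ \bm{x}^\top & \tau\end{bmatrix}\succeq \bm{0}$, equivalently (when $\bm{W}$ is invertible) $\tau \geq \bm{x}^\top \bm{W}^{-1}\bm{x}$ together with $\bm{x}\in\text{range}(\bm{W})$. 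The first step is to factor each rank-$d$ matrix as $\bm{\Lambda}_{[i\to j]} = \bm{\Phi}_{[i\to j]}\bm{\Phi}_{[i\to j]}^\top$ using the explicit $\bm{\Phi}_{[i\to j]}$ given just before the statement (these are well-defined since the relevant Schur complements are positive definite by Assumption~\ref{assum:block-factorizable}, so their inverse square roots exist).

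Next I would disaggregate the single conic constraint into one per arc. Introduce for each $1\leq i<j\leq n+1$ an auxiliary vector $\bm{h}_{ij}\in\R^d$ and scalar $\tau_{ij}\geq 0$, and set $\bm{x} = \sum_{i<j}\bm{\Phi}_{[i\to j]}\bm{h}_{ij}$, $\tau = \sum_{i<j}\tau_{ij}$. The claim is that the Schur-complement epigraph condition $\tau \geq \bm{x}^\top\bm{W}^{-1}\bm{x}$ with $\bm{W}=\sum_{i<j} w_{ij}\bm{\Phi}_{[i\to j]}\bm{\Phi}_{[i\to j]}^\top$ is equivalent, after projecting onto $(\bm x,\bm z,\tau,\bm w)$, to the system of rotated second-order cone constraints $\|\bm{h}_{ij}\|_2^2 \leq \tau_{ij}w_{ij}$ for all arcs. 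This is precisely the content of the Nesterov--Nemirovskii construction cited: the function $(\bm{x},\bm{w})\mapsto \bm{x}^\top\big(\sum_k w_k \bm{\Phi}_k\bm{\Phi}_k^\top\big)^{-1}\bm{x}$ is the infimal convolution (in the $\bm{x}$ variable, parametrized by the split of $\bm{w}$) of the terms $\bm{h}_k^\top\bm{h}_k / w_k$, so its closed convex epigraph is exactly $\{\exists \bm{h}_k,\tau_k:\ \bm{x}=\sum_k\bm{\Phi}_k\bm{h}_k,\ \tau\geq\sum_k\tau_k,\ \|\bm{h}_k\|^2\leq \tau_k w_k\}$. I would state this as a lemma (or invoke it directly from \cite{nesterov1994interior}) and then substitute, keeping the path constraints \eqref{spp_flow_1d}, \eqref{spp_z_1d}, \eqref{eq:path_nonneg} untouched since they only involve $\bm{z}$ and $\bm{w}$.

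The one subtlety to address carefully is the boundary behavior when some $w_{ij}=0$, i.e.\ when $\bm{W}$ is singular or the corresponding arc is not used: here the rotated cone $\|\bm{h}_{ij}\|^2\leq\tau_{ij}w_{ij}$ with $w_{ij}=0$, $\tau_{ij}\geq 0$ forces $\bm{h}_{ij}=\bm{0}$, which correctly contributes nothing to $\bm{x}$, and matches the closure convention $0\,f(x/0)$ adopted in the notation section. So the closed convex hull --- not merely the hull --- is captured, and the equivalence survives taking closures. I would verify that the projection of the lifted set onto $(\bm{x},\bm{z},\tau)$ agrees with \eqref{convX_block_sdp}: one direction follows by setting $\bm{h}_{ij}$ from an optimal split of $\bm{x}$ through $\bm{W}$, the other by reassembling $\bm{W}=\sum w_{ij}\bm{\Phi}_{[i\to j]}\bm{\Phi}_{[i\to j]}^\top$ and checking the Schur complement using the rotated-cone inequalities.

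The main obstacle is making the infimal-convolution / perspective argument rigorous at the degenerate points --- ensuring that the SOCP system describes the \emph{closure} of the convex hull rather than only its relative interior, and that no spurious points are added when several $w_{ij}$ vanish simultaneously (so that $\bm{W}$ drops rank in a structured way). This is handled by the standard observation that a finite sum of closed convex perspective epigraphs is closed when each summand is a closed cone (here each $\{\|\bm h_{ij}\|^2\le\tau_{ij}w_{ij},\ \tau_{ij},w_{ij}\ge 0\}$ is a closed rotated second-order cone), together with the fact that the linear map $(\bm{h}_{ij},\tau_{ij},w_{ij})_{ij}\mapsto(\bm{x},\tau,\bm{w})$ has the arc costs structure that keeps the image closed; everything else is bookkeeping that mirrors the $d=1$ reasoning in \S\ref{subsec:convexHull}.
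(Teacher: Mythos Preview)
Your proposal is correct and follows essentially the same approach as the paper: the paper simply invokes the Nesterov--Nemirovskii result \cite[pp.~227--229]{nesterov1994interior} on the SDP formulation of Theorem~\ref{cor:conv_block}, using the factorizations $\bm{\Lambda}_{[i\to j]}=\bm{\Phi}_{[i\to j]}\bm{\Phi}_{[i\to j]}^\top$, and states the proposition without further argument. Your write-up is more detailed (the infimal-convolution interpretation and the closure discussion at degenerate $w_{ij}$), but the route is identical.
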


\noindent Observe that the formulation in Proposition~\ref{cor:socp-block} does not involve matrix variables $\bm{W}\in \R^{dn\times dn}$, but includes $\mathcal{O}(n^2d)$ additional variables, $\mathcal{O}(dn)$ linear constraints, and $\mathcal{O}(n^2)$ $d$-dimensional conic quadratic constraints. 

Specializing the results to the factorable case
with similar notations $\bm{\phi}_{i \to j}$ such that $\bm{\Lambda}_{i \to j} = \bm{\phi}_{i \to j} \bm{\phi}_{i \to j}^\top$, or equivalently from Definition~\ref{def:consecutive1d}, 

$$\bm{\phi}_{i \to j} = 
\begin{cases}
\sqrt{\frac{u_j}{u_i(u_j v_i - u_i v_j)}}\left( \bm{e}_i - \frac{u_i}{u_j}\bm{e}_j\right) & \text{for } 1 \leq i < j \leq n \\
 \sqrt{\frac{1}{u_i v_i}} \ \bm{e}_i & \text{for } 1 \leq i \leq n, \ j = n+1,
\end{cases}
$$
we arrive at the following corollary.

\begin{cor} \label{cor:socp-singleton}
The closure of the convex hull of $X_{\bm{Q}}$ can be written as follows: 
\begin{align*}
    \cl \conv \left( X_{\boldsymbol{Q}}\right)=\Bigg\{ &(\bm{x},\bm{z},\tau) \in \R^{2n+1}: \exists  w_{ij}, \tau_{ij} \in \R_+ \text{ and } h_{ij} \in \R \text{ for }  0 \leq i < j \leq n+1, \text{ such that } \\
    & \quad \eqref{spp_flow_1d}, \eqref{spp_z_1d} , \ \tau \geq \sum_{i=1}^n \sum_{j=i+1}^{n+1} \tau_{ij}, \ \sum_{i=1}^n \sum_{j=i+1}^{n+1} \boldsymbol{\phi}_{i\to j} h_{ij} = \boldsymbol{x}, \  h_{ij}^2 \leq \tau_{ij} w_{ij}, \  1 \leq i < j \leq n+1\Bigg\}.
\end{align*}
\end{cor}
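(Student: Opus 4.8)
The cleanest route is to derive Corollary~\ref{cor:socp-singleton} as the $d=1$ instance of Proposition~\ref{cor:socp-block}, after two elementary observations. First, a factorizable matrix $\bm{Q}=\bm{U}\circ\bm{V}$ is exactly a block‑factorizable matrix $\bm{Q}=\bm{U}\bullet\bm{V}$ with $d=1$ and scalar ``blocks'' $\bm{U}_i=u_i$, $\bm{V}_i=v_i$, and (as already noted after Assumption~\ref{assum:block-factorizable}) Assumption~\ref{assump:factorable} is precisely Assumption~\ref{assum:block-factorizable} in this case; hence $X_{\bm{Q}}=X^B_{\bm{Q}}$ and Proposition~\ref{cor:socp-block} applies. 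Second, the rank‑one matrices $\bm{\Lambda}_{i\to j}$ of Definition~\ref{def:consecutive1d} factor as $\bm{\Lambda}_{i\to j}=\bm{\phi}_{i\to j}\bm{\phi}_{i\to j}^\top$ with $\bm{\phi}_{i\to j}$ the vector displayed before the corollary: for $1\le i<j\le n$,
\[
\bm{\phi}_{i\to j}\bm{\phi}_{i\to j}^\top=\frac{u_j}{u_i\left(u_jv_i-u_iv_j\right)}\Bigl(\bm{e}_i-\tfrac{u_i}{u_j}\bm{e}_j\Bigr)\Bigl(\bm{e}_i-\tfrac{u_i}{u_j}\bm{e}_j\Bigr)^\top,
\]
and $\bm{\phi}_{i\to n+1}\bm{\phi}_{i\to n+1}^\top=\tfrac{1}{u_iv_i}\bm{e}_i\bm{e}_i^\top$; Assumption~\ref{assump:factorable} makes both radicands positive, so $\bm{\phi}_{i\to j}$ is real and well defined, and these expressions are exactly the $d=1$ reduction of the factors appearing in Proposition~\ref{cor:socp-block}. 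With these identifications, the block quantities collapse to scalars: $\bm{h}_{ij}\in\R^d$ becomes $h_{ij}\in\R$, the conic constraint $\|\bm{h}_{ij}\|_2^2\le\tau_{ij}w_{ij}$ becomes $h_{ij}^2\le\tau_{ij}w_{ij}$, and $\sum_{i=1}^{n}\sum_{j=i+1}^{n+1}\bm{\phi}_{[i\to j]}\bm{h}_{ij}=\bm{x}$ becomes $\sum_{i=1}^{n}\sum_{j=i+1}^{n+1}\bm{\phi}_{i\to j}h_{ij}=\bm{x}$, which is precisely the claimed description.

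For completeness I would also spell out the underlying equivalence directly, which is the content of Proposition~\ref{cor:socp-block} at $d=1$. Starting from Theorem~\ref{cor:conv_singleton}, $\cl\conv(X_{\bm{Q}})$ is the set of $(\bm{x},\bm{z},\tau)$ for which there exist $\bm{w}\ge\bm{0}$ and $\bm{W}$ satisfying \eqref{eq:path} and the Schur‑complement condition $\left[\begin{smallmatrix}\bm{W}&\bm{x}\\\bm{x}^\top&\tau\end{smallmatrix}\right]\succeq\bm{0}$. Using \eqref{inv_decomp_1d} to substitute $\bm{W}=\sum_{i<j}w_{ij}\bm{\phi}_{i\to j}\bm{\phi}_{i\to j}^\top$, it remains to show that, for fixed $\bm{w}\ge\bm{0}$, this block PSD condition is equivalent to the existence of $h_{ij},\tau_{ij}\ge0$ with $\sum_{i<j}\bm{\phi}_{i\to j}h_{ij}=\bm{x}$, $\tau\ge\sum_{i<j}\tau_{ij}$ and $h_{ij}^2\le\tau_{ij}w_{ij}$ --- the second‑order cone representation of a nonnegative combination of dyads of \cite[pp.~227--229]{nesterov1994interior}. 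The ``if'' direction is immediate: each $2\times 2$ matrix $\left[\begin{smallmatrix}w_{ij}&h_{ij}\\h_{ij}&\tau_{ij}\end{smallmatrix}\right]$ is positive semidefinite (with the closure convention forcing $h_{ij}=0$ when $w_{ij}=0$), so for every $\bm{y}$ and $t$ the quantity $\sum_{i<j}\bigl(w_{ij}(\bm{\phi}_{i\to j}^\top\bm{y})^2+2t\,h_{ij}(\bm{\phi}_{i\to j}^\top\bm{y})+\tau_{ij}t^2\bigr)$ is nonnegative, and it lower‑bounds $\bm{y}^\top\bm{W}\bm{y}+2t\,\bm{x}^\top\bm{y}+\tau t^2$ because $\tau\ge\sum\tau_{ij}$.

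The ``only if'' direction is the step I expect to be the main obstacle. Here one uses that the block PSD condition forces $\bm{x}\in\operatorname{range}(\bm{W})$ and $\tau\ge\bm{x}^\top\bm{W}^{+}\bm{x}$, writes $\bm{x}=\bm{W}\bm{y}$ for some $\bm{y}$, and sets $h_{ij}=w_{ij}(\bm{\phi}_{i\to j}^\top\bm{y})$ and $\tau_{ij}=w_{ij}(\bm{\phi}_{i\to j}^\top\bm{y})^2$; then $\sum_{i<j}\bm{\phi}_{i\to j}h_{ij}=\bm{W}\bm{y}=\bm{x}$, $h_{ij}^2=w_{ij}\tau_{ij}$, and $\sum_{i<j}\tau_{ij}=\bm{y}^\top\bm{W}\bm{y}=\bm{x}^\top\bm{W}^{+}\bm{x}\le\tau$, with any residual slack absorbed into a single $\tau_{ij}$. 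The only care needed is in the bookkeeping of indices with $w_{ij}=0$ (where $h_{ij}=\tau_{ij}=0$) and in invoking the perspective/closure convention from the Notation section so that $h_{ij}^2\le\tau_{ij}w_{ij}$ correctly captures these degenerate rays; the remainder is routine linear algebra, and the factorable‑specific formulas for $\bm{\phi}_{i\to j}$ follow by substituting $u_i,v_i$ into Definition~\ref{def:consecutive1d}.
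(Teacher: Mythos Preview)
Your proposal is correct and matches the paper's approach exactly: the paper derives Corollary~\ref{cor:socp-singleton} by specializing Proposition~\ref{cor:socp-block} to $d=1$, identifying the scalar factors $\bm{\phi}_{i\to j}$ with $\bm{\Lambda}_{i\to j}=\bm{\phi}_{i\to j}\bm{\phi}_{i\to j}^\top$. Your ``for completeness'' paragraph spelling out the Nesterov--Nemirovskii equivalence is in fact more detailed than the paper, which simply cites \cite[pp.~227--229]{nesterov1994interior} for the SOCP representability of the semidefinite constraint and gives no further argument.
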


\section{Case study 1: Deconvolution of calcium imaging data}  \label{sec:calcium}

We consider the calcium imaging data deconvolution problem in statistical learning to demonstrate the real-world applicability of our study.
Calcium imaging is a widely used technique to indirectly measure neural activities. 
Deconvolution of calcium imaging data requires estimating the true spike events from the observed calcium signals, which is particularly challenging as the recorded signals are noisy.

For our experiment, we adopt the generative model introduced in \cite{jewell2018exact}, which expresses the observed fluorescence trace $r_i$ as a noisy version of the unobserved calcium concentration $s_i$. In this model, the calcium concentration $s_i$ decays exponentially over time until a spike $x_i$ occurs, instantaneously increasing the concentration. With a decay rate $\alpha \in (0,1)$ and Gaussian noise $\epsilon_i \overset{{\text{\tiny i.i.d.}}}{\sim} N (0, \sigma^2)$, $ i \in [n]$, the generative model is expressed by
\begin{equation}
    \begin{aligned}
        r_i & = \rho_0 + \rho_1 s_i + \epsilon_i, && i \in [n+1]\\
        s_{i+1} &= \alpha s_{i} + x_{i}, && i \in [n]
    \end{aligned}\label{calcium_generative_model}
\end{equation}
where $x_i \geq 0$, $ i \in [n]$. As the spike detection is scale-invariant, we assume $\rho_1 = 1$, and $\rho_0$ can be set to $0$ with a minor modification. Then, the calcium concentration can be estimated by solving the following optimization problem: 
\begin{subequations}
    \begin{align}
        \hspace{4cm} \min \ \ & \frac{1}{2} \sum_{i=1}^{n+1} \left( s_i - r_i \right)^2 + \lambda \sum_{i=1}^{n} z_i  \label{calcium_obj}\\
        \text{s.t. } \ & x_{i} = s_{i+1} - \alpha s_{i} \geq 0, &&  i \in [n] \label{calcium_linear}\\
        & x_i (1-z_i) = 0, &&  i \in [n] \hspace{3cm}\label{calcium_indicator}\\
        & z_i \in \{0,1\}, &&  i \in [n] \label{calcium_dim}
    \end{align} \label{calcium_nonnegative}%
\end{subequations}
Polynomial-time DP algorithms have been proposed for \eqref{calcium_nonnegative} and its relaxation without the nonnegativity constraint on $\boldsymbol{x}$ \cite{jewell2018exact, jewell2020fast}.
These DP algorithms cannot be generalized to problems that integrate the estimation problem \eqref{calcium_nonnegative} as a substructure. 

Projecting out $s_i$ using the equation in \eqref{calcium_linear}, $\forall i \in [n+1]$, and converting the objective function into vector form, \eqref{calcium_nonnegative} can be reformulated as
\begin{equation}
    \begin{aligned}
        \min \ \ & \boldsymbol{x}^\top \boldsymbol{Q} \boldsymbol{x} + \boldsymbol{a}^\top \boldsymbol{x} + \lambda \sum_{i=1}^n z_i \\
        \text{ s.t. } \ &\boldsymbol{x} \geq 0, \ \boldsymbol{x} \circ (\boldsymbol{e} - \boldsymbol{z}) = 0, \ \boldsymbol{z} \in \{0,1\}^n
    \end{aligned} \label{calcium_vec}
\end{equation}
with a factorizable matrix $\boldsymbol{Q}$. 
The details of constructing model \eqref{calcium_vec} are provided in Appendix \ref{Appendix:calcium}.

\subsection{Numerical results}

We test three variations of the problem \eqref{calcium_vec}: i) a relaxed problem without the nonnegativity constraint on $\boldsymbol{x}$, ii) the original problem, and iii) a capacity-constrained version. The test data is generated using  the model \eqref{calcium_generative_model}, where $x_i \overset{{\text{\tiny i.i.d.}}}{\sim} \text{Pois}(\mu)$. We randomly generate ten instances for each combination of $(n, \mu, \sigma)$, where $n \in \{50, 100, 150, 200, 250, 300\}$, $\mu \in \{0.01, 0.02, 0.03, 0.04, 0.05\}$ and $\sigma \in \{0.05,0.1,0.15,0.2,0.25\}$. We compare the performance of solving the MIQP model \eqref{calcium_nonnegative} with solving the proposed SOCP reformulations and SPP algorithm (only for the relaxed version) on the calcium data deconvolution problems. 
The experiments were conducted on a 2.3-GHz Intel Xeon 12-core Haswell processor with 64 GB of memory, using Python 3.11 and the Gurobi 10.0 solver with a 30-minute time limit. 

Table~\ref{table:calcium_var_num_all} reports the number of continuous and binary variables in MIQP and (MI)SOCP models, as well as the number of second-order cone constraints in the (MI)SOCP model. The model size of MIQP is linear in $n$, whereas the size of (MI)SOCP is quadratic in $n$. As the Gurobi solver struggles to handle models involving many conic constraints, the (MI)SOCP model can sometimes suffer from numerical errors, cutting off feasible solutions. 

\begin{table}[htp]
\footnotesize
\centering
\caption{Model size of MIQP and (MI)SOCP for calcium deconvolution.}
\makebox[\textwidth]{\setlength{\tabcolsep}{5pt}
\begin{tabular}{c|cc|ccc}
\hline \hline
 & \multicolumn{2}{c|}{MIQP}                                                                                                                      & \multicolumn{3}{c}{(MI)SOCP}                                                                                                                      \\ \hline
   n                & \multicolumn{1}{c}{\# cont.} & \multicolumn{1}{c|}{\# binary}  & \multicolumn{1}{c}{\# cont.} & \multicolumn{1}{c}{\# indicator $\dagger$} & \multicolumn{1}{c}{\# conic}  \\ \hline
50                 & \multicolumn{1}{c}{101}           & \multicolumn{1}{c|}{50}          &  \multicolumn{1}{c}{4,131}        & \multicolumn{1}{c}{50} & \multicolumn{1}{c}{1,326}            \\ 
100                & \multicolumn{1}{c}{200}          & \multicolumn{1}{c|}{100}           & \multicolumn{1}{c}{15,756}        & \multicolumn{1}{c}{100}    & \multicolumn{1}{c}{5,151}        \\ 
150                & \multicolumn{1}{c}{300}          & \multicolumn{1}{c|}{150}         & \multicolumn{1}{c}{34,881}        & \multicolumn{1}{c}{150}     & \multicolumn{1}{c}{11,476}     \\ 
200                & \multicolumn{1}{c}{400}          & \multicolumn{1}{c|}{200}         & \multicolumn{1}{c}{61,506}       & \multicolumn{1}{c}{200}     & \multicolumn{1}{c}{20,301}       \\ 
250                & \multicolumn{1}{c}{500}          & \multicolumn{1}{c|}{250}      & \multicolumn{1}{c}{95,631}       & \multicolumn{1}{c}{250}    & \multicolumn{1}{c}{31,626}        \\ 
300                & \multicolumn{1}{c}{600}          & \multicolumn{1}{c|}{300}          & \multicolumn{1}{c}{137,256}       & \multicolumn{1}{c}{300}   & \multicolumn{1}{c}{45,451}        \\ \hline \hline
\end{tabular}
}
\vskip 2mm
($\dagger$) The indicator $\bm{z}$ can be relaxed to a continuous variable in the SOCP model of the relaxed calcium deconvolution problem while remaining binary for the MISOCP models of the original and capacity-constrained versions. 
\hfill
\captionsetup{justification=centering}
\label{table:calcium_var_num_all}
\end{table}

\paragraph{Relaxed calcium data deconvolution problem.} For the relaxed version of \eqref{calcium_nonnegative}, where the spike $\boldsymbol{x}$ is not restricted in sign, the SOCP representation of $\cl \conv (X_{\bm{Q}})$ is tight. Moreover, the SPP algorithm proposed in Corollary~\ref{cor:DP-singleton} can be applied to solve the problem directly. Therefore, we compare the performance of solving the MIQP model \eqref{calcium_nonnegative} without $\bm{x} \geq 0$, its exact SOCP reformulation, and solving the problem with the SPP algorithm.

The boxplots of the overall computational time for these three approaches are given in Figure~\ref{fig:default_stime_all}. The SPP algorithm solves all instances quickly, and the SOCP model also performs well. However, the MIQP model exhibits significant variability in computational time, with timeouts occurring for some instances.
The performance gap between the different approaches becomes clearer when they are compared as a function of $n$ as in Table~\ref{table:calcium_default_avg}. Notably, the root relaxation gap of the MIQP model is 100\% for every instance, while the SOCP formulation is tight. The computational time of the MIQP model increases rapidly with $n$, while the SOCP model and SPP algorithm scale very well as $n$ increases. Moreover, the number of branch-and-bound (B$\And$B) nodes explored to solve MIQP increases fast for large $n$. While the MIQP model often has a substantial final gap after the time limit, all returned solutions are optimal. The SOCP model performs well for most instances, though it reported a suboptimal solution with a large error rate (14.7\%) for one instance with $n=300$, $\mu=0.04$, $\sigma = 0.15$, and five instances with small error rates less than 1\%. Here, the error rate is the percent gap between the returned objective value and the actual optimal value. Interestingly, the computational time of the MIQP tends to increase as $\mu$ decreases and $\sigma$ increases. In contrast, the performance of the SOCP shows no remarkable change as a function of these parameters.

\begin{figure}[htp]
    \centering
    \includegraphics[height=2.0in]{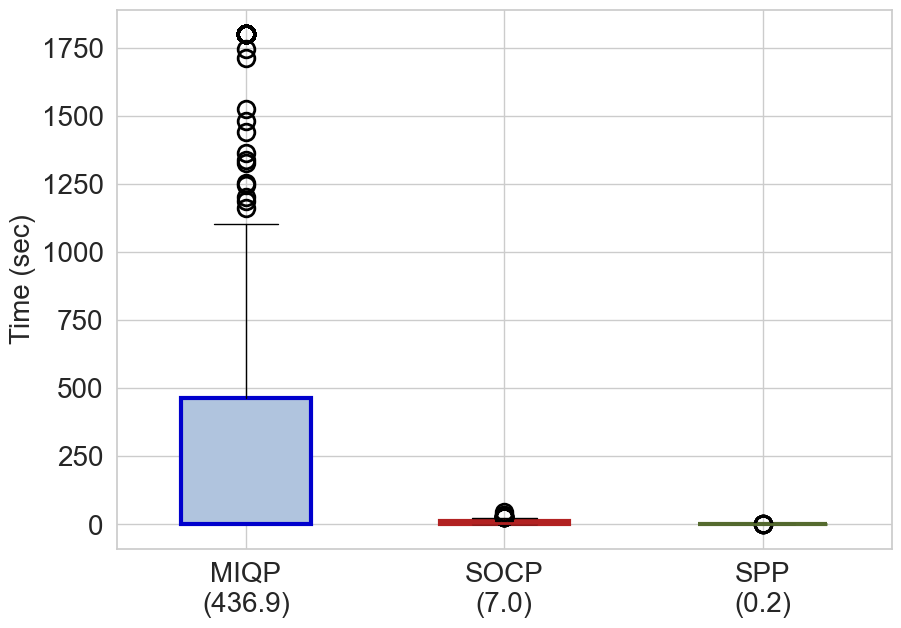}
    \caption{Computational time of MIQP, SOCP, and SPP on relaxed calcium deconvolution.}
    \label{fig:default_stime_all}
\end{figure}

\begin{table}[htp]
\vspace{-0.1cm}
\footnotesize
\centering
\caption{Model Performance on relaxed calcium deconvolution as a function of $n$.}
\setlength\tabcolsep{4pt}
\begin{tabular}{c|ccc|cccc|cc}
\hline \hline
            & \multicolumn{3}{c|}{Solve time (sec)}                          & \multicolumn{4}{c|}{MIQP}                                                                                    & \multicolumn{2}{c}{SOCP}                    \\ \hline 
 n & \multicolumn{1}{c}{MIQP}   & \multicolumn{1}{c}{SOCP} & SPP & \multicolumn{1}{c}{Root gap (\%)} & \multicolumn{1}{c}{Final gap (\%)} & \multicolumn{1}{c}{Optimal (\%)}   & \# B$\And$B Nodes & \multicolumn{1}{c}{Optimal (\%)}     & Err. (\%) \\ \hline
\multicolumn{1}{c|}{50}                                                                      & \multicolumn{1}{c}{{0.3}}    & \multicolumn{1}{c}{{0.2}}  & {0.0}  & \multicolumn{1}{c}{{100}}     & \multicolumn{1}{c}{{0.0}}       & \multicolumn{1}{c}{{100}} & {160}         & \multicolumn{1}{c}{{100}}   & {0.0}           \\ 
\multicolumn{1}{c|}{{100}}                                                                      & \multicolumn{1}{c}{{3.3}}    & \multicolumn{1}{c}{{1.2}}  & {0.0}  & \multicolumn{1}{c}{{100}}     & \multicolumn{1}{c}{{0.0}}       & \multicolumn{1}{c}{{100}} &{56,503}       & \multicolumn{1}{c}{{100}}   & {0.0}           \\ 
\multicolumn{1}{c|}{{150}}                                                             & \multicolumn{1}{c}{{129.3}}  & \multicolumn{1}{c}{{5.1}}  & {0.1} & \multicolumn{1}{c}{{100}} & \multicolumn{1}{c}{{1.2}}  & \multicolumn{1}{c}{{100}} & {1,920,016}  & \multicolumn{1}{c}{{99.6}} & {0.1}  \\ 
\multicolumn{1}{c|}{{200}}                                                             & \multicolumn{1}{c}{{502.6}}  & \multicolumn{1}{c}{{8.4}}  & {0.2} & \multicolumn{1}{c}{{100}} & \multicolumn{1}{c}{{5.9}}  & \multicolumn{1}{c}{{100}} & {7,934,164}  & \multicolumn{1}{c}{{100}}   & {0.0}  \\ 
\multicolumn{1}{c|}{{250}}                                                             & \multicolumn{1}{c}{{839.4}}  & \multicolumn{1}{c}{{11.2}} & {0.2} & \multicolumn{1}{c}{{100}} & \multicolumn{1}{c}{{13.7}} & \multicolumn{1}{c}{{100}} & {12,030,090} & \multicolumn{1}{c}{{98.8}} & {0.3}  \\ 
\multicolumn{1}{c|}{{300}}                                                             & \multicolumn{1}{c}{{1,146.7}} & \multicolumn{1}{c}{{15.7}} & {0.3} & \multicolumn{1}{c}{{100}} & \multicolumn{1}{c}{{23.3}} & \multicolumn{1}{c}{{100}} & {14,441,887} & \multicolumn{1}{c}{{99.2}} & {7.45}  \\ \hline 
\multicolumn{1}{c|}{\textbf{Avg.}}                                                     & \multicolumn{1}{c}{\textbf{436.9}}  & \multicolumn{1}{c}{\textbf{7.0}}  & \textbf{0.1} & \multicolumn{1}{c}{\textbf{100}} & \multicolumn{1}{c}{\textbf{7.4}}  & \multicolumn{1}{c}{\textbf{100}} & \textbf{6,063,803}  & \multicolumn{1}{c}{\textbf{99.6}} & \textbf{2.65}  \\ \hline \hline
\end{tabular}

\captionsetup{justification=centering}
\label{table:calcium_default_avg}
\end{table}

\paragraph{Original calcium data deconvolution problem.} For the original problem \eqref{calcium_nonnegative}, the SOCP reformulation does not yield a convex hull representation of the $X_{\bm{Q}}$, and the SPP algorithm is not applicable due to the nonnegativity constraint on $\bm{x}$. Therefore, we compare the performance of MIQP and MISOCP models. 

The results on the performance of the two models are reported in Table~\ref{table:calcium_nonnegative_avg}. Although the MISOCP takes a slightly longer computational time than the MIQP for $n \leq 100$, the computational time of MIQP rapidly increases for larger $n$, and several instances cannot be solved to optimality within the time limit. In contrast, the MISOCP model is solved relatively quickly, even for large $n$ values. Observe that the continuous relaxation of MISOCP is strong, even with the nonnegativity constraint on $\bm{x}$, whereas the MIQP model exhibits a large relaxation gap in most cases. Consequently, millions of B\&B nodes are required for the MIQP model, whereas MISOCP is solved within only 1-2 nodes. 
Notice that the relaxation gap of the MIQP is significantly influenced by the spike frequency parameter $\mu$. As $\mu$ increases, the relaxation gap of the MIQP grows substantially while the MISOCP maintains a small gap. Consequently, the computational time of the MIQP rapidly increases with $\mu$, whereas the MISOCP model can be solved fast, even for large $\mu$ values. No specific trends were observed in the performance of either model with respect to the standard deviation $\sigma$ of noise.

\begin{table}[htp]
\vspace{-0.2cm}
\footnotesize
\centering
\caption{Model performance on the original calcium deconvolution.}
\makebox[\textwidth]{\setlength\tabcolsep{4pt}
\begin{tabular}{cc|cc|cc|cc|cc|cc|c}
\hline \hline
\multicolumn{2}{c|}{\multirow{2}{*}{Instance}} & \multicolumn{2}{c|}{Solve time (sec)}                & \multicolumn{2}{c|}{Root gap (\%)}          & \multicolumn{2}{c|}{Final gap (\%)}               & \multicolumn{2}{c|}{Optimal (\%)}                                 & \multicolumn{2}{c|}{\# B$\And$B nodes}                          & Err. (\%) $\dagger$   \\ \cline{3-13}
&  & \multicolumn{1}{c}{miqp}           & misocp         & \multicolumn{1}{c}{miqp}          & misocp       & \multicolumn{1}{c}{miqp}          & misocp       & \multicolumn{1}{c}{miqp}             & misocp           & \multicolumn{1}{c}{miqp}               & misocp       & misocp       \\ \hline
\multicolumn{1}{c|}{\multirow{6}{*}{n}} &\multicolumn{1}{c|}{{50}}                                                             & \multicolumn{1}{c}{{0.3}}   & {0.6}   & \multicolumn{1}{c}{{57.5}} & {0.0} & \multicolumn{1}{c}{{0.0}}  & {0.0} & \multicolumn{1}{c}{{100}}   & {100}   & \multicolumn{1}{c}{{83}}      & {1.0} & {0.0} \\ 
\multicolumn{1}{c|}{}&\multicolumn{1}{c|}{{100}}                                                             & \multicolumn{1}{c}{{3.7}}   & {5.0}   & \multicolumn{1}{c}{{62.3}} & {0.0} & \multicolumn{1}{c}{{0.0}}  & {0.0} & \multicolumn{1}{c}{{100}}   & {99.2} & \multicolumn{1}{c}{{39,818}}   & {1.0} & {0.0 (1)} \\ 
\multicolumn{1}{c|}{}&\multicolumn{1}{c|}{{150}}                                                             & \multicolumn{1}{c}{{94.2}}  & {15.9}  & \multicolumn{1}{c}{{68.5}} & {0.0} & \multicolumn{1}{c}{{0.7}}  & {0.0} & \multicolumn{1}{c}{{99.6}} & {99.6} & \multicolumn{1}{c}{{886,573}}  & {1.0} & {0.0} \\ 
\multicolumn{1}{c|}{}&\multicolumn{1}{c|}{{200}}                                                             & \multicolumn{1}{c}{{321.2}} & {35.4}  & \multicolumn{1}{c}{{67.5}} & {0.0} & \multicolumn{1}{c}{{3.3}}  & {0.0} & \multicolumn{1}{c}{{98.8}} & {100}   & \multicolumn{1}{c}{{3,198,009}} & {1.2} & {0.0} \\ 
\multicolumn{1}{c|}{}&\multicolumn{1}{c|}{{250}}                                                             & \multicolumn{1}{c}{{564.8}} & {94.0}  & \multicolumn{1}{c}{{69.2}} & {0.0} & \multicolumn{1}{c}{{7.0}}  & {0.0} & \multicolumn{1}{c}{{97.2}} & {98.0}  & \multicolumn{1}{c}{{4,866,083}} & {1.5} & {0.03 (2)} \\ 
\multicolumn{1}{c|}{}&\multicolumn{1}{c|}{{300}}                                                             & \multicolumn{1}{c}{{811.9}} & {160.4} & \multicolumn{1}{c}{{69.1}} & {0.0} & \multicolumn{1}{c}{{12.3}} & {0.0} & \multicolumn{1}{c}{{96.8}} & {96.0}  & \multicolumn{1}{c}{{7,544,878}} & {1.6} & {0.05 (7)} \\ \hline 

\multicolumn{1}{c|}{\multirow{6}{*}{$\mu$}}  & \multicolumn{1}{c|}{{0.01}}  & 1.9	& 16.2	&41.6	&0.0	&0.0	&0.0	&100	&99.7	&8,268	&1.0	&0.0 (0) \\
\multicolumn{1}{c|}{}& \multicolumn{1}{c|}{{0.02}}  &27.3	&23.0	&59.8	&0.0	&0.2	&0.0	&100	&99.3	&205,229	&1.1	&0.16 (1)\\
\multicolumn{1}{c|}{}& \multicolumn{1}{c|}{{0.03}}  &262.3	&48.4	&70.1	&0.0	&2.6	&0.0	&99.0	&98.7	&2,098,894	&1.1	&0.03 (3) \\
\multicolumn{1}{c|}{}& \multicolumn{1}{c|}{{0.04}}  &463.9	&83.8	&75.6	&0.0	&5.5	&0.0	&99.0	&98.3	&3,800,353	&1.3	&0.03 (3) \\
\multicolumn{1}{c|}{}& \multicolumn{1}{c|}{{0.05}}  &741.5	&88.0	&81.2	&0.1	&11.2	&0.0	&95.6	&98.0	&7,666,793	&1.6	&0.01 (3) \\ \hline
\multicolumn{2}{c|}{\textbf{Avg.}}                                                     & \multicolumn{1}{c}{\textbf{299.4}} & \textbf{51.9}  & \multicolumn{1}{c}{\textbf{65.7}} & \textbf{0.0} & \multicolumn{1}{c}{\textbf{3.9}}  & \textbf{0.0} & \multicolumn{1}{c}{\textbf{98.7}} & \textbf{98.8} & \multicolumn{1}{c}{\textbf{2,755,907}} & \textbf{1.2} & \textbf{0.03 (10)} \\ \hline \hline
\end{tabular}

}
\vskip 2mm
($\dagger$) Average of instances with suboptimal solutions only. The number of instances reported infeasible given in parathesis. \hfill
\captionsetup{justification=centering}
\label{table:calcium_nonnegative_avg}
\end{table}

Despite its advantages, the MISOCP model encounters some numerical errors when solved with the Gurobi solver. Out of 1,500 instances, 8 instances exhibited a small error rate of 0.03\% on average and 10 instances were reported as infeasible. Increasing the numeric focus of Gurobi helped in these cases. Among the 10 instances reported infeasible, 7 were solved to optimality, 1 reported a solution with 5.67\% error, and 2 instances were still reported as infeasible with Gurobi numeric focus parameter set to 3. 

\paragraph{Capacity-constrained calcium deconvolution problem}
We extend the tests to MIQP and MISOCP formulations with an additional capacity constraint $\bm{g}^\top \bm{z} \leq h$. The weight $\bm{g}$ and capacity $h$ are randomly generated as $g_i \overset{{\text{\tiny i.i.d.}}}{\sim} \text{unif}\{1,5\}$ and $h = \frac{1}{2} \sum_{i \in [n]} g_i$. This constraint is relevant in statistical learning when there is prior knowledge of the activity likelihood. 
For example, in calcium concentration data collected over a day, brain activity is likely higher during daytime work hours and significantly lower during sleep. Another scenario involves tasks with multiple steps imposing varying cognitive loads. In such cases, different weights are assigned to the indicators to reflect the step-dependent activity levels.

The performance of the MIQP and MISOCP models for each combination of $n$ and $\mu$ is summarized in Table~\ref{table:calcium_knapsack_avg}. 
The computational time patterns observed are similar to those for
the original problem without the capacity constraint. Specifically, the computational time for the MIQP model increases rapidly for $n \geq 150$, while MISOCP remains tractable even for large $n$. The root relaxation gap and the number of B$\And$B nodes provide insights into computational time differences. While the MIQP model generally has a large relaxation gap, requiring a substantial number of B\&B nodes to solve. In contrast, the MISOCP model exhibits a tight relaxation, resulting in only a few B$\And$B nodes to explore.
Although the MIQP model fails to prove optimality for instances with large $n$, it finds an optimal solution within the time limit for every instance. On the other hand, the MISOCP model outperforms the MIQP model in most cases, but encountered numerical errors in 23 out of 1,500 instances. Most of them had a small error rate of 1.13\% on average, though two instances were reported as infeasible, cutting off all feasible solutions. 
The spike frequency $\mu$ significantly affects the performance of the MIQP model. As $\mu$ increases, the relaxation of the MIQP model weakens, leading to a noticeable increase in computational time.
The MISOCP model is far less sensitive to changes in $\mu$.
Neither model exhibited clear trends in computational time or relaxation gap as a function of the standard deviation of noise $\sigma$.
However, the relaxation gap of MIQP tended to decrease as $n$ increased.
These results suggest that the MISOCP model is more efficient when incorporating additional constraints, making it a practical choice for multi-period MIQPs arising in diverse domains. 

\begin{table}[htp]
\vspace{-0.2cm}
\footnotesize
\centering
\caption{Model performance on capacity-constrained calcium deconvolution.}
\makebox[\textwidth]{\setlength\tabcolsep{4pt}
\begin{tabular}{cc|cc|cc|cc|cc|cc|c}
\hline \hline
\multicolumn{2}{c|}{\multirow{2}{*}{Instance}}  & \multicolumn{2}{c|}{Solve time (sec)}                & \multicolumn{2}{c|}{Root gap (\%)}          & \multicolumn{2}{c|}{Final gap (\%)}               & \multicolumn{2}{c|}{Optimal (\%)}                                 & \multicolumn{2}{c|}{\# B$\And$B nodes}                          & Err. (\%) $\dagger$  \\ \cline{3-13}
\multicolumn{2}{c|}{} & \multicolumn{1}{c}{miqp}           & misocp         & \multicolumn{1}{c}{miqp}          & misocp       & \multicolumn{1}{c}{miqp}          & misocp       & \multicolumn{1}{c}{miqp}             & misocp           & \multicolumn{1}{c}{miqp}               & misocp       & misocp       \\ \hline
\multicolumn{1}{c|}{\multirow{6}{*}{n}} & 50            & \multicolumn{1}{c}{0.0}            & 0.2           & \multicolumn{1}{c}{47.5}          & 0.0          & \multicolumn{1}{c}{0.0}          & 0.0          & \multicolumn{1}{c}{100}          & 100           & \multicolumn{1}{c}{27}                 & 1.0          & 0.0           \\
\multicolumn{1}{c|}{} &100           & \multicolumn{1}{c}{0.1}            & 1.1           & \multicolumn{1}{c}{61.8}          & 0.0          & \multicolumn{1}{c}{0.0}          & 0.0          & \multicolumn{1}{c}{100}          & 100           & \multicolumn{1}{c}{3,287}              & 1.0          & 0.0           \\ 
\multicolumn{1}{c|}{} &150           & \multicolumn{1}{c}{2.9}            & 3.1           & \multicolumn{1}{c}{64.2}          & 0.0          & \multicolumn{1}{c}{0.0}          & 0.0          & \multicolumn{1}{c}{100}          & 100           & \multicolumn{1}{c}{149,475}            & 1.0          & 0.0           \\ 
\multicolumn{1}{c|}{} &200           & \multicolumn{1}{c}{131.9}          & 11.7          & \multicolumn{1}{c}{67.1}          & 0.0          & \multicolumn{1}{c}{0.7}          & 0.0          & \multicolumn{1}{c}{100}          & 98.8          & \multicolumn{1}{c}{5,112,404}          & 1.2          & 0.13          \\ 
\multicolumn{1}{c|}{} &250           & \multicolumn{1}{c}{429.0}          & 29.0          & \multicolumn{1}{c}{68.3}          & 0.0          & \multicolumn{1}{c}{4.4}          & 0.0          & \multicolumn{1}{c}{100}          & 97.6          & \multicolumn{1}{c}{14,159,775}         & 1.3          & 0.40 (1)      \\ 
\multicolumn{1}{c|}{} &300           & \multicolumn{1}{c}{548.7}          & 87.6          & \multicolumn{1}{c}{67.8}          & 0.0          & \multicolumn{1}{c}{7.6}          & 0.0          & \multicolumn{1}{c}{100}          & 94.4          & \multicolumn{1}{c}{17,678,772}         & 2.2          & 1.65 (1)      \\ \hline

\multicolumn{1}{c|}{\multirow{5}{*}{$\mu$}} & 0.01	&0.2	&5.1	&41.8	&0.0	&0.0	&0.0	&100	&100	&1,412	&1.0	&0.0\\
\multicolumn{1}{c|}{} &0.02	&2.0	&7.0	&53.5	&0.0	&0.0	&0.0	&100	&99.7	&50,246	&1.0	&0.16 \\
\multicolumn{1}{c|}{} &0.03	&116.9	&18.8	&67.5	&0.0	&0.8	&0.0	&100	&98.3	&3,526,656	&1.3	&0.89 \\
\multicolumn{1}{c|}{} &0.04	&298.9	&32.3	&74.2	&0.0	&2.7	&0.0	&100	&97.7	&9,705,110	&1.4	&0.87 \\
\multicolumn{1}{c|}{} &0.05	&509.3	&47.5	&76.9	&0.0	&7.0	&0.0	&100	&96.6	&17,636,359	&1.8	&1.95 (2) \\ \hline

\multicolumn{2}{c|}{\textbf{Avg.}} & \multicolumn{1}{c}{\textbf{185.4}} & \textbf{22.1} & \multicolumn{1}{c}{\textbf{62.8}} & \textbf{0.0} & \multicolumn{1}{c}{\textbf{2.1}} & \textbf{0.0} & \multicolumn{1}{c}{\textbf{100}} & \textbf{98.5} & \multicolumn{1}{c}{\textbf{6,183,957}} & \textbf{1.3} & \textbf{1.13} \\ \hline \hline
\end{tabular}

}\vskip 2mm
($\dagger$) Average of instances with suboptimal solutions only. The number of instances reported infeasible given in parathesis. \hfill
\captionsetup{justification=centering}
\label{table:calcium_knapsack_avg}
\end{table}

\section{Case study 2: Path-following problem in hybrid system control} \label{sec:path-following}

Hybrid system control is another domain where multi-period MIQP problems in the form of \eqref{MIQP-ddim} frequently arise in decision-making.
A notable example is the path-following problem, which aims to find an optimal control strategy that makes a system follow a predefined trajectory as closely as possible while satisfying system constraints.  For a general system with linear dynamics, the path-following problem can be formulated as follows:
\begin{equation}
    \begin{aligned}
        \min \ &\sum_{i=1}^{n+1} (\boldsymbol{s}_i - \boldsymbol{r}_i )^\top \boldsymbol{P}_i (\boldsymbol{s}_i - \boldsymbol{r}_i) + \sum_{i=1}^n \boldsymbol{y}_{[i]}^\top \boldsymbol{R}_i \boldsymbol{y}_{[i]} + \sum_{i=1}^n c_i z_i \\
        \text{s.t. } & \boldsymbol{s}_1 = \boldsymbol{b}_0\\
        & \boldsymbol{s}_{i+1} = \boldsymbol{A}_i \boldsymbol{s}_i + \boldsymbol{G}_i \boldsymbol{y}_{[i]} + \boldsymbol{k}_i z_i + \bm{b}_i, &&  i \in [n] \\
        & \boldsymbol{s}_i^{\min} \leq \boldsymbol{s}_i \leq \boldsymbol{s}_i^{\max}, &&  i \in [n+1]\\
        & \boldsymbol{y}_i^{\min} z_i \leq \boldsymbol{y}_{[i]} \leq \boldsymbol{y}_i^{\max} z_i, &&  i \in [n]\\
        & \boldsymbol{s}_{i+1} \in \bbR^{d_s}, \ \boldsymbol{y}_{[i]} \in \bbR^{d_y}, \ z_i \in \{0,1\}, &&  i \in [n].
    \end{aligned} \label{path-following}
\end{equation}
where the state of the system in period $i+1$, $s_{i+1}$, is an affine function of the control variable $\bm{y}_{[i]}$ and its indicator $\bm{z}_i$. Additional quadratic cost on the control variables $\bm{y}$ is also considered along with bound constraints on the state and control variables. $\boldsymbol{P}_i \text{ and } \boldsymbol{R}_i$ are positive definite for all $i$.

The energy management of power-split hybrid electric vehicles (PS-HEV) is a prominant application in hybrid control that often involves solving a path-following problem \eqref{path-following} with appropriately chosen parameters to determine optimal control strategies that balance the engine and battery usage \cite{borhan2011mpc}.
Given a reference vehicle cycle $V^{\text{ref}}$, the goal is to control the vehicle to follow the cycle as closely as possible, while optimizing energy efficiency. A model predictive control (MPC) approach \cite{borrelli2017predictive} is used to solve the discretized linear system control. At each period $t$, current state $\boldsymbol{s}_t$ is measured and the control decision for the next $n$ periods is determined by solving  \eqref{path-following}. In this formulation,  
the state $\boldsymbol{s}_i$, the control $\boldsymbol{y}_{[i]}$, and the indicator variables $z_i$ are defined as follows:
\begin{align}
    \boldsymbol{s}_i = \begin{bmatrix}
        {SoC}_{t+i} \\[0.2em] \dot{fc}_{t+i} 
    \end{bmatrix}, \quad \boldsymbol{y}_{[i]} = \begin{bmatrix}
        \omega^{\text{eng}}_{t+i} \\[0.4em] T^{\text{eng}}_{t+i} \\[0.3em] V_{t+i} - V_{t+i}^{\text{ref}}
    \end{bmatrix}, \quad z_i = \phi^{\text{eng}}_{t+i}, \qquad i \in [n]. \label{hev_vars}
\end{align}
Notice that the dimensions of the state and control variables do not need to be equal.
The state variable in this problem consists of the battery's state of charge (${SoC}$ )and the fuel consumption rate ($\dot{fc}$). The control variables are the engine speed ($\omega^{\text{eng}}$) and torque ($T^{\text{eng}}$), and the gap between the vehicle speed ($V$) and the reference speed ($V^{\text{ref}}$). The binary indicator $\phi^{\text{eng}}$ denotes whether the engine is on or off. 
Define auxiliary variables $\boldsymbol{x}_{[i]} = \boldsymbol{G} \boldsymbol{y}_{[i]} + \boldsymbol{k} z_i$ indicating the total control input at period $t+i$, $i \in [n]$. Then, by projecting out the state variables $\boldsymbol{s}_i$, the following formulation is obtained:
\begin{subequations}
    \begin{align}
        \min \ \ & \boldsymbol{x}\sz{dn}^\top \boldsymbol{Q}\sz{dn} \boldsymbol{x}\sz{dn} + \sum_{i=1}^{n} \boldsymbol{y}\sz{dn}_{[i]}^\top \boldsymbol{R}\sz{dn}_i \boldsymbol{y}\sz{dn}_{[i]} + \boldsymbol{a}\sz{dn}^\top \boldsymbol{x}\sz{dn} + \sum_{i=1}^{n} c_i z_i + v \label{hev_obj}\\
        \text{s.t. } \ & \boldsymbol{x}_{[i]} = \boldsymbol{G}_i \boldsymbol{y}_{[i]} + \boldsymbol{k}_i z_i, && i \in [n] \label{hev_input}\\
        &\boldsymbol{s}_{i}^{\min} \leq \left( \Pi_{t=1}^{i-1} \boldsymbol{A}_{i-t} \right) \boldsymbol{b}_0 + \sum_{\tau=1}^{i-1} \left( \Pi_{t=1}^{i -\tau-1} \boldsymbol{A}_{i-t} \right) \left( \boldsymbol{x}_{[\tau]} + \boldsymbol{b}_{\tau} \right) \leq \boldsymbol{s}_{i}^{\max}, && i \in [n+1] \label{hev_s_bd} \\
        & \boldsymbol{y}_{i}^{\min} z_{i} \leq \boldsymbol{y}_{[i]} \leq \boldsymbol{y}_{i}^{\max} z_{i}, && i \in [n] \label{hev_y_bd}\\
        &z_{i} \in \{0,1\}, && i \in [n] \label{hev_z_binary}
    \end{align} \label{hev_proj}%
\end{subequations}
with a block-factorizable positive definite matrix $\boldsymbol{Q}\sz{dn} \in \bbR^{dn \times dn}$ and $\boldsymbol{a}\sz{dn} \in \bbR^{dn}$ for $d=2$. Details of construction of $\boldsymbol{Q}\sz{dn}$ and $\boldsymbol{a}\sz{dn}$ are provided in Appendix \ref{Appendix:hev-block}. The same reformulation can be applied to path-following problems in various other domains.

\subsection{Numerical results}
In this experiment, we test the practical performance of the SOCP reformulation of the multi-period MIQP formulation \eqref{path-following} on generic instances of the energy management of PS-HEV with decision variables defined as in \eqref{hev_vars}.
The dataset consists of 10 randomly generated instances for each combination of $(n, \lambda)$, where $n \in \{10,20,\ldots,70\}$ and $\lambda \in \{2,4,6,8,10\}$ is the fixed indicator cost $c_i = \lambda$, $i \in [n]$.
We use fixed cost matrices in each instance constructed as $\boldsymbol{P}_i = \bm{P} = \frac{1}{2} \Tilde{\boldsymbol{P}} \Tilde{\boldsymbol{P}}^\top + \frac{1}{4} \boldsymbol{I}  \in \bbR^{2 \times 2} $ for ${\tilde P}_{ij} \overset{{\text{\tiny i.i.d.}}}{\sim} N(0,1)$ and $\boldsymbol{R}_i = \bm{R} = 0.1 \boldsymbol{I} \in \bbR^{3 \times 3} $ for all $i \in [n]$. The predefined trajectory $\boldsymbol{r}_i \in \bbR^{2}$ is generated as $r_{ij} \overset{{\text{\tiny i.i.d.}}}{\sim} \text{unif}[-2,2]$ for $i \in [n+1]$ and $j \in [2]$. The initial state $\boldsymbol{b}_0 \in \bbR^{2}$ is given as $b_{0j} \overset{{\text{\tiny i.i.d.}}}{\sim} \text{unif}[1,3]$, $j \in [2]$. The system dynamics is defined by a matrix $\boldsymbol{A} = \frac{1}{\max(|\text{eig}({\tilde A})|)}\Tilde{\boldsymbol{A}} \in \bbR^{2 \times 2}$ for ${\tilde A}_{ij} \overset{{\text{\tiny i.i.d.}}}{\sim} N(0,1)$, a full-row rank matrix $\boldsymbol{B} \in \bbR^{2 \times 3}$ with $B_{ij} \overset{{\text{\tiny i.i.d.}}}{\sim} N(0,1)$, and a vector $\boldsymbol{c} \in \bbR^{2}$ with $c_i \overset{{\text{\tiny i.i.d.}}}{\sim} \text{unif}[1,3]$. Fixed bounds of the state and control variables are used, which are given as $s_i^{\min} = -5$, $s_i^{\max} = 10$ for $i \in [2]$ and $y_j^{\min} = -2.3$, $y_j^{\max} = 2.3$ for $j \in [3]$. Each randomly generated parameter is rounded to one decimal place. The experimental setup follows the same procedure as in the calcium data deconvolution study in the previous section.

Three formulations are compared: i) the MIQP formulation \eqref{path-following} (MIQP), ii) the perspective reformulation \cite{frangioni2006perspective} of MIQP \eqref{path-following} (MIQP-p), where the quadratic cost on the control variable $\bm{y}_{[i]}^\top \bm{R}_i \bm{y}_{[i]}$ is replaced by its perspective function $\frac{1}{z_i}\bm{y}_{[i]}^\top \bm{R}_i \bm{y}_{[i]}$ for all $i \in [n]$, and iii) the MISOCP formulation given by
\begin{align*}
    \min \ & \tau + \sum_{i=1}^{n} \frac{1}{z_i}\boldsymbol{y}\sz{dn}_{[i]}^\top \boldsymbol{R}\sz{dn}_i \boldsymbol{y}\sz{dn}_{[i]} + \boldsymbol{a}\sz{dn}^\top \boldsymbol{x}\sz{dn} + \sum_{i=1}^{n} c_i z_i + v  \\
    \text{s.t. } & (\tau, \bm{x}, \bm{z}) \in \cl \conv \left( X_{\bm{Q}}^B\right) \\
    & \eqref{hev_input}, \eqref{hev_s_bd}, \eqref{hev_y_bd}
\end{align*}
for $\cl \conv \left( X_{\bm{Q}}^B\right)$ defined as in Proposition~\ref{cor:socp-block}.
The sizes of the three models are given in Table~\ref{table:block-factorizable_syntheic_size}. The sizes of MIQP and MIQP-p are $\mathcal{O}((d_s + d_y)n)$, whereas the MISOCP has $\mathcal{O}(d_s n^2 + d_y n)$ continuous variables and $\mathcal{O}(n^2)$ second-order cone constraints.

\begin{table}[H]
\footnotesize
\centering
\caption{Model size of MIQP, MIQP-p and MISOCP of path-following problem.}
\makebox[\textwidth]{\setlength\tabcolsep{4pt}
\begin{tabular}{c|c|ccc|cc}
\hline \hline
   & \# binary & \multicolumn{3}{c|}{\# cont.} & \multicolumn{2}{c}{\# conic} \\ \hline
n  & all       & miqp   & miqp-p   & misocp   & miqp-p        & misocp       \\ \hline
10 & 10        & 52     & 62       & 316      & 10            & 76           \\
20 & 20        & 102    & 122      & 1,026     & 20            & 251          \\
30 & 30        & 152    & 182      & 2,136     & 30            & 526          \\
40 & 40        & 202    & 242      & 3,646     & 40            & 901          \\
50 & 50        & 252    & 302      & 5,556     & 50            & 1,376         \\
60 & 60        & 302    & 362      & 7,866     & 60            & 1,951         \\
70 & 70        & 352    & 422      & 10,576    & 70            & 2,626        \\
\hline \hline
\end{tabular}}
\captionsetup{justification=centering}
\label{table:block-factorizable_syntheic_size}
\end{table}

The performance of the three models is summarized in Table~\ref{table:block-factorizable_syntheic_avg}. The computational times for the MIQP and MIQP-p models dramatically increase for $n \geq 50$, whereas the MISOCP model remains tractable even for large $n$. Although the perspective reformulation improves the average relaxation gap by $2.8-3.9\%$, it requires somewhat longer computation time and a higher number of B$\And$B nodes. Meanwhile, MISOCP yields a very small relaxation gap even for large $n$, resulting in short computational times and a limited number of B$\And$B nodes.
We further examine the relaxation gap and the computational time of each model as $\lambda$ changes. The relaxation gaps for the MIQP and MIQP-p show similar trends: they gradually increase until $\lambda \leq 6$ and decrease thereafter. Both models exhibit shorter computational times for large $\lambda$. The MISOCP model outperforms  MIQP and MIQP-p for all $\lambda$.
In Table~\ref{table:synthetic-subopt}, we detail the instances where the MISOCP model encountered numerical errors.
Among 350 instances tested, the MISOCP model reached the time limit in 13 instances, returned suboptimal solutions in 7 instances, and returned infeasible cutting off all solutions in 11 instances.
Although the MISOCP generally outperforms the other models, these numerical issues highlight areas for further improvement in robustness.

\begin{table}[htp]
\footnotesize
\centering
\caption{Performance of MIQP, MIQP-p, and MISOCP on path-following problem.}
\makebox[\textwidth]{\setlength\tabcolsep{4pt}
\begin{tabular}{c|c|ccc|ccc|ccc}
\hline \hline
\multicolumn{2}{c|}{\multirow{2}{*}{Instance}}     & \multicolumn{3}{c|}{Solve time (sec)} & \multicolumn{3}{c|}{Root gap (\%)} & \multicolumn{3}{c}{\# B \& B nodes} \\ \cline{3-11}
\multicolumn{2}{c|}{}   & miqp       & miqp-p     & misocp     & miqp     & miqp-p     & misocp    & miqp        & miqp-p     & misocp   \\ \hline
\multicolumn{1}{c|}{\multirow{7}{*}{n}}   &10   & 0.04       & 0.06       & 0.10       & 23.0     & 20.2       & 0.5       & 31          & 50         & 1.0      \\
\multicolumn{1}{c|}{}   &20   & 0.14       & 0.12       & 0.25       & 27.2     & 24.0       & 0.2       & 553         & 636        & 1.0      \\
\multicolumn{1}{c|}{}   &30   & 0.41       & 0.36       & 0.65       & 31.8     & 28.1       & 0.2       & 4,680        & 5,124       & 1.0      \\
\multicolumn{1}{c|}{}   &40   & 5.31       & 5.12       & 1.40       & 28.8     & 25.4       & 0.1       & 125,533      & 128,635     & 1.0      \\
\multicolumn{1}{c|}{}   &50   & 113.18     & 110.20     & 3.13       & 32.4     & 28.8       & 0.2       & 3,065,524     & 3,278,168    & 12.7     \\
\multicolumn{1}{c|}{}   &60   & 297.67     & 316.62     & 4.45       & 30.1     & 26.4       & 0.2       & 7,172,776     & 7,956,593    & 18.0     \\
\multicolumn{1}{c|}{}   &70   & 769.27     & 795.31     & 12.72      & 35.2     & 31.3       & 0.4       & 13,109,308    & 14,015,702   & 56.0     \\ \hline

\multicolumn{1}{c|}{\multirow{5}{*}{$\lambda$}}   &2      & 256.29	&272.18      & 4.0        & 29.3     & 23.6       & 0.2       & 5,091,287    & 5,578,520    & 15.4      \\
\multicolumn{1}{c|}{}   &4      & 249.23	&257.89      & 3.3        & 31.0     & 27.0       & 0.3       & 4,938,694    & 5,463,695    & 15.9      \\
\multicolumn{1}{c|}{}   &6      & 135.18	&138.95     & 3.9        & 31.2     & 28.0       & 0.3       & 2,705,863    & 2,744,289    & 21.4      \\
\multicolumn{1}{c|}{}   &8      & 118.61	&119.16      & 2.3        & 29.9     & 27.3       & 0.4       & 2,391,683    & 2,569,761    & 6.6       \\
\multicolumn{1}{c|}{}   &10     & 65.23	&65.56       & 2.4        & 27.1     & 25.1       & 0.3       & 1,260,723    & 1,357,922    & 4.2       \\ \hline
\multicolumn{2}{c|}{\textbf{Avg.}} & \textbf{165.60}     & \textbf{171.45}     & \textbf{3.16}       & \textbf{29.7}     & \textbf{26.2}       & \textbf{0.3}       & \textbf{3,291,128}     & \textbf{3,559,475}    & \textbf{12.7}    \\ \hline \hline
\end{tabular}}
\captionsetup{justification=centering}
\label{table:block-factorizable_syntheic_avg}
\end{table}

\begin{table}[H]
\caption{Suboptimal solutions and numerical errors of MISOCP.}
\footnotesize
\centering
\makebox[\textwidth]{\setlength\tabcolsep{4pt}
\begin{tabular}{c|c|c|cc|c}
\hline \hline
$n$                   & $\lambda$ & Time limit & Suboptimal & Final gap (\%) $\dagger$ & Cut-off $\star$ \\ \hline
\multirow{1}{*}{30} & 2       & 2/10          & 1/10          & 0.17      &    -     \\ \hline
\multirow{4}{*}{40} & 2        & 3/10          & 2 /10         & 10.00     &  -       \\
                    & 6           & 2/10         & 1/10          & 35.41     & 1/10       \\
                    & 8         & 1/10          & 1/10          & 42.08     &   -      \\
                    & 10         & 1/10          & 1/10          & 22.42     &   -      \\ \hline
\multirow{2}{*}{50}                    & 6         & -          &    -        &     -      & 1/10       \\
                    & 10         & -          &      -      &      -     & 2/10       \\ \hline
\multirow{2}{*}{60} & 8           & -          &     -       &       -    & 2/10       \\
                    & 10          & 1/10          &      -      &      -     & 2/10       \\ \hline
\multirow{3}{*}{70}                    & 4         & -          &       -     &     -      & 1/10       \\
                    & 6          & 2/10          & 1/10          & 15.07     & 1/10      \\
                    & 10           & 1/10          &      -      &    -       & 1/10     \\ \hline \hline 
\end{tabular}}
\label{table:synthetic-subopt}
\vspace{1mm}\\
\begin{tabular}{@{}l@{}}
($\dagger$) The average is reported only for instances with suboptimal solutions.\\
($\star$) Terminated with numerical errors. 
\end{tabular}
\end{table}

\section{Conclusion}\label{sec:conclusion}
In this paper, we consider a multi-period convex quadratic optimization problem with linear state dynamics. By projecting the state variables out using the linear dynamics constraints, we show that the problem can be reformulated as a mixed-integer quadratic programming problem (MIQP) with a quadratic cost represented by a (block-) factorizable matrix. We analyze the properties of (block-) factorizable matrices and leverage these properties to derive an explicit expression for the inverse of any (block) principal submatrix of (block-) factorizable matrices as a compact sum of low-rank matrices. This expression allows us to establish the closure of the convex hull representation of the epigraph of the quadratic cost over the feasible region in a compact convex formulation. We further give a tight SOCP reformulation of the MIQP with $\mathcal{O}(n^2)$ second-order cone constraints. Moreover, we show that the MIQP problem can be cast as a shortest path problem on a directed acyclic graph, allowing a combinatorial polynomial-time algorithm.  We present two case studies on multi-period optimization problems in statistical learning and hybrid system control, illustrating the practical applicability and highlighting the numerical challenges of the proposed approaches.
Our study advances the theoretical understanding of multi-period MIQPs with dependencies between state variables of consecutive periods and proposes computationally efficient solution methodologies.

\bibliographystyle{plain}
\bibliography{Reference_arXiv}

\newpage
\appendix
\section*{Appendix}
\renewcommand{\thesubsection}{\Alph{subsection}}
\renewcommand{\thesubsubsection}{\Alph{subsection}.\arabic{subsubsection}}

\subsection{Reformulation of multi-period convex quadratic optimization problems} \label{Appendix:motivating_ex}
\subsubsection{One-dimensional state variables} \label{Appendix:motivating_ex_singleton}
Consider the following $n$-period mixed-integer quadratic optimization problem with linear transitions between periods: 
\begin{subequations}
    \begin{align}
        \hspace{3cm}\min \ & \sum_{i=1}^{n+1} p_{i} \left(s_{i} - r_{i}\right)^2 + \sum_{i=1}^n f_i x_i + \sum_{i=1}^{n} c_i z_i \hspace{-5cm} \label{MIQP-1dim_obj_rep} \\ 
        \text{s.t. } \ & s_1 = \beta_0, \label{MIQP-1dim_init_rep} \\
        & s_{i+1} = \alpha_i s_i + x_i + \beta_i, &&i \in [n] \hspace{3cm} \label{MIQP-1dim_linear_rep} \\
        & x_i (1-z_i) = 0,  &&i \in [n]  \\
        & s_{i+1}, x_i \in \mathbb{R}, \ z_i \in \{0,1\}, &&i \in [n] 
    \end{align} \label{MIQP-1dim_rep}%
\end{subequations}
where $\alpha_i \neq 0$ and $p_{i}>0$, $i \in [n+1]$. Note that \eqref{MIQP-1dim_rep} is a special case of \eqref{MIQP-ddim} with $d=1$. Employing \eqref{MIQP-1dim_init_rep} and \eqref{MIQP-1dim_linear_rep}, $s_i$ can be stated as
\begin{align*}
    s_i =  \left(\Pi_{t=1}^{i-1} \alpha_{i-t} \right) \beta_0  + \sum_{\tau = 1}^{i-1} \left( \Pi_{t=1}^{i - \tau -1} \alpha_{i - t}\right) \left(x_\tau + \beta_\tau \right) , \qquad i \in [n+1].
\end{align*}
Then, by projecting out the state variables $s_i$, $\forall i \in [n+1]$, a reduced formulation of \eqref{MIQP-1dim_rep} is obtained:
\begin{subequations}
    \begin{align}
        \min \ \ & \sum_{i=1}^{n+1} p_{i} \left[ \left(\Pi_{t=1}^{i-1} \alpha_{i-t} \right) \beta_0 + \sum_{\tau=1}^{i-1} \left(\Pi_{t=1}^{i-\tau-1} \alpha_{i-t}\right) \left( x_\tau + \beta_\tau \right) - r_{i}\right]^2 + \sum_{i=1}^n f_i x_i + \sum_{i=1}^n c_i z_i \label{MIQP-1dim_obj_proj_rep} \\ 
        \text{s.t. } \ & x_i (1-z_i) = 0, \ x_i \in \mathbb{R}, \ z_i \in \{0,1\},  \qquad i \in [n]. \label{MIQP-1dim_size_rep}
    \end{align} \label{MIQP-1dim_proj_rep}%
\end{subequations}
Letting $g_i = \left(\Pi_{t=1}^{i-1} \alpha_{i-t} \right) \beta_0 + \sum_{\tau=1}^{i-1} \left(\Pi_{t=1}^{i-\tau-1} \alpha_{i-t}\right) \beta_\tau - r_{i}$, $ i \in [n+1]$, \eqref{MIQP-1dim_obj_proj_rep} can be written as
\begin{align*}
    \sum_{i=1}^{n+1} p_{i} \left[ \left(\sum_{\tau=1}^{i-1} \left(\Pi_{t=1}^{i-\tau-1} \alpha_{i-t}\right) x_\tau \right) + g_i \right]^2 + \sum_{i=1}^n f_i x_i +  \sum_{i=1}^n c_i z_i =  \boldsymbol{x}^\top \boldsymbol{Q} \boldsymbol{x} + \boldsymbol{a}^\top \boldsymbol{x}  + \boldsymbol{c}^\top \boldsymbol{z} + v
\end{align*}
where $v = \sum_{i=1}^{n+1} p_{i} g_i^2$, and $\boldsymbol{a} \in \bbR^n$ is a vector with the $i$-th entry defined as
\begin{align*}
    a_i = 2 \sum_{\tau=i+1}^{n+1} p_{\tau} g_{\tau} \left( \Pi_{t=1}^{\tau - i-1} \alpha_{\tau-t} \right) + f_i, \qquad 1 \leq i \leq n
\end{align*}
and the matrix $\boldsymbol{Q} = \begin{bmatrix}
    Q_{ij}
\end{bmatrix}_{i,j \in [n]} \in \bbR^{n \times n}$ is formulated as
\begin{equation*}
    \begin{aligned}
        Q_{ij} = Q_{ji} & = \sum_{\tau=j+1}^{n+1} p_{\tau} \left(\Pi_{t=1}^{\tau-i-1} \alpha_{\tau -t}\right) \left(\Pi_{t=1}^{\tau-j-1} \alpha_{\tau-t}\right), && 1 \leq i \leq j \leq n .
    \end{aligned} 
\end{equation*}
Note that $\bm{Q} \succ \bm{0}$ if $p_i > 0$, $ i \in [2,n+1]$, as $\bm{Q}$ can be decomposed as
\begin{align*}
    \bm{Q} = \overbrace{\begin{bmatrix}
     1 & \alpha_2 & \alpha_2 \alpha_3 & \cdots & \alpha_2 \Compactcdots \alpha_n \\
     & 1 & \alpha_3 & \cdots & \alpha_3 \Compactcdots \alpha_n  \\
     & & 1 & \cdots & \alpha_4 \Compactcdots \alpha_n\\
     & & & \ddots & \vdots\\
     & & & & 1
\end{bmatrix}}^{\bm{L}^\top} \overbrace{\begin{bmatrix}
    p_2 & & & &\\
     & p_3 & & &\\
     & & p_4 & & \\
     & & &\ddots & \\
     & & & & p_{n+1}
\end{bmatrix}}^{\bm{D}} \overbrace{\begin{bmatrix}
    1 & & & & \\
    \alpha_2 & 1 & & &\\
    \alpha_3 \alpha_2 & \alpha_3 & 1 & & \\
    \vdots & \vdots & \vdots & \ddots & \\
    \alpha_n \Compactcdots \alpha_2 & \alpha_n \Compactcdots \alpha_3 & \alpha_n \Compactcdots \alpha_4 & \cdots & 1
\end{bmatrix}}^{L}.
\end{align*}
Define $\{u_i\}_{i \in [n]}$ and $\{v_i\}_{i \in [n]}$ as
\begin{align*}
    & u_i = \Pi_{t=1}^{n-i} \alpha_{n+1-t}, \quad v_i = \frac{\sum_{\tau=i+1}^{n+1} p_{\tau} \left(\Pi_{t=1}^{\tau-i-1} \alpha_{\tau - t}\right)^2 }{\Pi_{t=1}^{n-i} \alpha_{n+1-t}}, \qquad  i \in [n].
\end{align*}
Then, it holds
\begin{align*}
    \hspace{5cm} u_i v_j &= Q_{ij} = Q_{ji}, &&  1 \leq i \leq j \leq n, \hspace{3cm}
\end{align*}
and $\boldsymbol{Q}$ is a factorizable matrix defined by $\{u_i\}_{i \in [n]}$ and $\{v_i\}_{i \in [n]}$. 

As $\alpha_t \neq 0$ and $p_t >0$, $\forall t \in [n]$, it holds $u_i v_i > 0$ for $i \in [n]$. Additionally, we have
\begin{align*}
    u_{j} v_i - u_i v_{j} = \ & \left(\Pi_{t=1}^{n-j} \alpha_{n+1-t}\right) \cdot \frac{\sum_{\tau=i+1}^{n+1} p_{\tau} \left(\Pi_{t=1}^{\tau-i-1} \alpha_{\tau - t}\right)^2 }{\Pi_{t=1}^{n-i} \alpha_{n+1-t}} - \left(\Pi_{t=1}^{n-i} \alpha_{n+1-t}\right) \cdot \frac{\sum_{\tau=j+1}^{n+1} p_{\tau} \left(\Pi_{t=1}^{\tau-j-1} \alpha_{\tau - t}\right)^2 }{\Pi_{t=1}^{n-j} \alpha_{n+1-t}} \\
    = \ & \left(\Pi_{t=n-j+1}^{n-i} \alpha_{n+1-t}\right)^{-1} \sum_{\tau=i+1}^{j} p_{\tau} \left(\Pi_{t=1}^{\tau-i-1} \alpha_{\tau - t}\right)^2 \\
    = \ & \left(\Pi_{t=n-j+1}^{n-i} \alpha_{n+1-t}\right)^{-1} \left( p_{i+1} + \sum_{\tau=i+2}^{j} p_{\tau} \left(\Pi_{t=1}^{\tau-i-1} \alpha_{\tau - t}\right)^2 \right) ,
\end{align*}
and
\begin{align*}
    u_i v_j = \ & \left(\Pi_{t=1}^{n-i} \alpha_{n+1-t}\right) \cdot \frac{\sum_{\tau=j+1}^{n+1} p_{\tau} \left(\Pi_{t=1}^{\tau-j-1} \alpha_{\tau - t}\right)^2 }{\Pi_{t=1}^{n-j} \alpha_{n+1-t}} = \left(\Pi_{t=n-j+1}^{n-i} \alpha_{n+1-t}\right)^{-1} \left( p_{j+1} + \sum_{\tau=j+2}^{n+1} p_{\tau} \left(\Pi_{t=1}^{\tau-j-1} \alpha_{\tau - t}\right)^2 \right)
\end{align*}
for $1 \leq i < j \leq n$. Therefore, 
$$u_i v_j \left( u_{j} v_i - u_i v_{j}\right) = \left( p_{i+1} + \sum_{\tau=i+2}^{j} p_{\tau} \left(\Pi_{t=1}^{\tau-i-1} \alpha_{\tau - t}\right)^2 \right) \left( p_{j+1} + \sum_{\tau=j+2}^{n+1} p_{\tau} \left(\Pi_{t=1}^{\tau-j-1} \alpha_{\tau - t}\right)^2 \right) >0.$$
Consequently, the factorizable matrix $\bm{Q}$ satisfies Assumption~\ref{assump:factorable}.

\subsubsection{Calcium imaging deconvolution}  \label{Appendix:calcium}

Recall the calcium imaging data deconvolution problem given in \S\ref{sec:calcium}:
\begin{equation}
    \begin{aligned}
        \hspace{4cm} \min \ & \frac{1}{2} \sum_{i=1}^{n+1} \left( s_i - r_i \right)^2 + \lambda \sum_{i=1}^{n} z_i \\ 
        \text{s.t. } & s_1 = \beta_0, \\
        & x_{i} = s_{i+1} - \alpha s_{i} \geq 0, &&  i \in [n] \\ 
        & x_i (1-z_i) = 0, &&  i \in [n] \hspace{3cm}\\ 
        & z_i \in \{0,1\}, &&  i \in [n] 
    \end{aligned} \label{calcium_nonnegative_rep}%
\end{equation}
for $\alpha \in (0,1)$.
Then, if the nonnegativity of the spikes $x_i$ is relaxed, \eqref{calcium_nonnegative_rep} is in the form of \eqref{MIQP-1dim_rep} with $p_i = \frac{1}{2}$, $f_i = 0$, $c_i = \lambda$, $\alpha_i = \alpha$, and $\beta_i = 0$ for $i \in [n]$. Therefore, \eqref{calcium_nonnegative_rep} can be reformulated as 
\begin{equation*}
    \begin{aligned}
        \min \ & \boldsymbol{x}^\top \boldsymbol{Q} \boldsymbol{x} + \boldsymbol{a}^\top \boldsymbol{x} + \lambda \sum_{i=1}^n z_i +v\\
        \text{ s.t. } &\boldsymbol{x} \geq 0, \ \boldsymbol{x} \circ (\boldsymbol{e} - \boldsymbol{z}) = 0, \ \boldsymbol{z} \in \{0,1\}^n,
    \end{aligned} 
\end{equation*}
where $v = \frac{1}{2}\sum_{i=1}^{n+1} \left( \alpha^{i-1} \beta_0 - r_{i} \right)^2$, and $\boldsymbol{a}$, $\boldsymbol{Q}$ are defined as
\begin{align*}
    \boldsymbol{a} = \begin{bmatrix}
        \sum_{t=2}^{n+1} \alpha^{t-2} \left(\alpha^{t-1} \beta_0 - r_t \right) \\
        \sum_{t=3}^{n+1} \alpha^{t-3} \left(\alpha^{t-1} \beta_0 - r_t \right) \\
        \vdots \\
        \alpha^{n} \beta_0 - r_{n+1}  
    \end{bmatrix}, \qquad \boldsymbol{Q} = \frac{1}{2}\begin{bmatrix}
        \sum_{t=2}^{n+1} \alpha^{2t-4} & \sum_{t=3}^{n+1} \alpha^{2t-5} & \cdots & \alpha^{n-1} \\
        \sum_{t=3}^{n+1} \alpha^{2t-5} & \sum_{t=3}^{n+1} \alpha^{2t-6} & \cdots & \alpha^{n-2} \\
        \vdots & \vdots & \ddots & \vdots \\
        \alpha^{n-1} & \alpha^{n-2} & \cdots & 1
    \end{bmatrix}.
\end{align*}
Furthermore, $\boldsymbol{Q} = \boldsymbol{U} \circ \boldsymbol{V}$ holds for matricies $\boldsymbol{U}$ and $\boldsymbol{V}$ defined by $\{u_i\}_{i \in [n]}$ and $\{v_i\}_{i \in [n]}$ as in \eqref{two_seq_mat} with 
\begin{align*}
     u_i = \alpha^{n-i}, \  v_i = \frac{\sum_{t=i+1}^{n+1} \alpha^{2(t-i-1)}}{2\alpha^{n-i}} = \frac{\alpha^{2(n-i+1)} -1}{2\alpha^{n-i} \left(\alpha^2 - 1 \right)}, \quad i \in [n].
\end{align*}
Here, we assume $s_1 = \beta_0$ is known. When $s_1$ is unknown, a similar formulation can be obtained by introducing auxiliary variables $x_0$ and $z_0$, the input and indicator variables in period $0$, with constraints $x_0 (1-z_0)=0$ and $z_0=1$, ensuring that the initial state $s_1 = x_0$ has no restriction by the indicator $z_0$.

\subsubsection{Multi-dimensional variables} \label{Appendix:motivating_ex_block}
Consider the $n$-period quadratic optimization problem with linear transitions and multi-dimensional decision variables:
\begin{equation}
    \begin{aligned}
        \hspace{3cm}\min \ & \sum_{i=1}^{n+1} \left(\boldsymbol{s}_{i} - \boldsymbol{r}_{i}\right)^\top \boldsymbol{P}_{i} \left(\boldsymbol{s}_{i} - \boldsymbol{r}_{i}\right) + \sum_{i=1}^n \bm{f}_i^\top \bm{x}_{[i]} + \sum_{i=1}^{n} c_i z_i \hspace{-2.2cm} \\ 
        \text{s.t. } \ & \boldsymbol{s}_1 = \boldsymbol{b}_0, \\ 
        & \boldsymbol{s}_{i+1} = \boldsymbol{A}_i \boldsymbol{s}_i + \boldsymbol{x}_{[i]} + \boldsymbol{b}_i, && i \in [n] \hspace{3cm} \\ 
        & \boldsymbol{x}_{[i]} (1-z_i) = \boldsymbol{0}\sz{d}, \ z_i \in \{0,1\},  && i \in [n] \\
        & \left( \left\{\boldsymbol{s}_i\right\}_{i=1}^{n+1}, \bm{x}, \boldsymbol{z}\right) \in C \subseteq \bbR^{(n+1)d} \times \bbR^{nd} \times \bbR^n \hspace{-2.2cm}
    \end{aligned} \label{MIQP-ddim_rep}%
\end{equation}
where $\boldsymbol{P}_{i}, \boldsymbol{A}_i \in \bbR^{d \times d}$ are nonsingular, $\bm{c} \in \R^n$, $\boldsymbol{r}_i, \bm{f}_i, \boldsymbol{b}_i \in \bbR^d$, and $\boldsymbol{P}_i \succ \bm{0}$ for all $i$. As in Appendix~\ref{Appendix:motivating_ex_singleton}, the state variables $\boldsymbol{s}_i$, $i \in [n+1]$, can be expressed as a function of the input variables $\boldsymbol{x}_i$:
\begin{align*}
    \boldsymbol{s}_i = \left( \Pi_{t=1}^{i-1} \boldsymbol{A}_{i-t} \right) \boldsymbol{b}_0 + \sum_{\tau=1}^{i-1} \left( \Pi_{t=1}^{i -\tau-1} \boldsymbol{A}_{i-t} \right) \left( \boldsymbol{x}_{[\tau]} + \boldsymbol{b}_{\tau} \right).
\end{align*}
Then, \eqref{MIQP-ddim_rep} can be projected into a reduced space as follows:
\begin{equation*}
    \begin{aligned}
        \min \ & \sum_{i=1}^{n+1} \left[\sum_{\tau=1}^{i-1} \left(\Pi_{t=1}^{i-\tau-1} \boldsymbol{A}_{i-t}\right) \boldsymbol{x}_{[\tau]}  
        + \boldsymbol{g}_i \right]^\top \boldsymbol{P}_i \left[\sum_{\tau=1}^{i-1} \left(\Pi_{t=1}^{i-\tau-1} \boldsymbol{A}_{i-t}\right) \boldsymbol{x}_{[\tau]}  
        + \boldsymbol{g}_i \right] + \sum_{i=1}^n \bm{f}_i^\top \bm{x}_{[i]} + \sum_{i=1}^n c_i z_i \\ 
        \text{s.t. } \ & \bm{x}_{[i]} (1-z_i) = \bm{0},  && \hspace{-8.8cm} i \in [n] \hspace{6cm} \\ 
        & \bm{x}_{[i]} \in \mathbb{R}^d, \ z_i \in \{0,1\}, && \hspace{-8.8cm} i \in [n] 
    \end{aligned} 
\end{equation*}
where $\boldsymbol{g}_i = \left(\Pi_{t=1}^{i-1} \boldsymbol{A}_{i-t} \right) \boldsymbol{b}_0 + \sum_{\tau=1}^{i-1} \left(\Pi_{t=1}^{i-\tau-1} \boldsymbol{A}_{i-t}\right) \boldsymbol{b}_\tau - \boldsymbol{r}_{i}$, $i \in [n+1]$. After stacking the input variables $\boldsymbol{x}_{[i]}$, $\forall i \in [n]$, into a vector $\boldsymbol{x}\sz{dn}$, the following formulation is obtained:
\begin{equation*}
    \begin{aligned}
        \min \ & \boldsymbol{x}\sz{dn}^\top \boldsymbol{Q}\sz{dn} \boldsymbol{x}\sz{dn} + \boldsymbol{a}\sz{dn}^\top \boldsymbol{x}\sz{dn} + \boldsymbol{c}^\top \boldsymbol{z} + v \\ 
        \text{s.t. } \ & \boldsymbol{x}_{[i]}\sz{dn} (1 - z_i) = 0, \qquad \qquad i \in [n] \\
        & \boldsymbol{x}\sz{dn} \in \mathbb{R}^{dn}, \ \boldsymbol{z} \in \{0,1\}^n 
    \end{aligned}
\end{equation*}
where $v = \sum_{i=1}^{n+1} \boldsymbol{g}_i^\top \boldsymbol{P}_i \boldsymbol{g}_i$, $\boldsymbol{a}\sz{dn} \in \bbR^{dn}$ is a concatenated vector with $n$ of $d$-dimensional subvectors of which the $i$-th subvector is defined as
\begin{align*}
    \boldsymbol{a}_{[i]}\sz{d} = 2 \sum_{\tau=i+1}^{n+1} \left( \Pi_{t=1}^{\tau-i-1} \boldsymbol{A}_{\tau - t} \right)^\top \boldsymbol{P}_{\tau} \boldsymbol{g}_{\tau} + \bm{f}_i , \qquad i \in [n]
\end{align*}
and $\boldsymbol{Q}\sz{dn} \in \bbR^{dn \times dn}$ is a symmetric block matrix of which the $(i,j)$-th block $\boldsymbol{Q}\sz{d}_{[ij]} \in \bbR^{d \times d}$ is constructed as
\begin{equation*}
    \begin{aligned}
        \boldsymbol{Q}\sz{d}_{[ij]} = \boldsymbol{Q}\sz{d}_{[ji]}^\top & = \sum_{\tau=j+1}^{n+1} \left(\Pi_{t=1}^{\tau-i-1} \boldsymbol{A}_{\tau - t}\right)^\top \boldsymbol{P}_{\tau}  \left(\Pi_{t=1}^{\tau-j-1} \boldsymbol{A}_{\tau-t}\right), \qquad 1 \leq i \leq j \leq n .
    \end{aligned} 
\end{equation*}
The matrix $\bm{Q}$ can be decomposed as
\begin{align*}
    \bm{Q} = \overbrace{\begin{bmatrix}
     \bm{I} & \bm{A}_2^\top & \bm{A}_2^\top \bm{A}_3^\top & \cdots & {\left(\Pi_{i=1}^{n-1} \bm{A}_{n+1-i}\right)^\top}\\
     & \bm{I} & \bm{A}_3 & \cdots & \left(\Pi_{i=1}^{n-2} \bm{A}_{n+1-i}\right)^\top  \\
     & & \bm{I} & \cdots & \left(\Pi_{i=1}^{n-3} \bm{A}_{n+1-i}\right)^\top\\
     & & & \ddots & \vdots\\
     & & & & \bm{I}
\end{bmatrix}}^{\bm{L}^\top} & \overbrace{\begin{bmatrix}
    \bm{P}_2 & & & &\\
     & \bm{P}_3 & & &\\
     & & \bm{P}_4 & & \\
     & & &\ddots & \\
     & & & & \bm{P}_{n+1}
\end{bmatrix}}^{\bm{D}} \\
& \times \overbrace{\begin{bmatrix}
    \bm{I} & & & & \\
    \bm{A}_2 & \bm{I} & & &\\
    \bm{A}_3 \bm{A}_2 & \bm{A}_3 & \bm{I} & & \\
    \vdots & \vdots & \vdots & \ddots & \\
    \Pi_{i=1}^{n-1} \bm{A}_{n+1-i} & \Pi_{i=1}^{n-2} \bm{A}_{n+1-i} & \Pi_{i=1}^{n-3} \bm{A}_{n+1-i} & \cdots & \bm{I}
\end{bmatrix}}^{L}, 
\end{align*}
and, therefore, $\bm{Q} \succ \bm{0}$ if $\bm{P}_i \succ \bm{0}$, $\forall i \in [2,n+1]$.
Moreover, $\bm{Q} = \boldsymbol{U}\sz{dn} \bullet \boldsymbol{V}\sz{dn} $ is a block-factorizable matrix, where matrices $\bm{U}\sz{dn}$ and $\bm{V}\sz{dn}$ are of the form described in Definition~\ref{def:blockSep} with $\left\{\boldsymbol{U}_i\sz{d} \in \bbR^{d \times d} \right\}_{i \in [n]}$ and $\left\{\boldsymbol{V}_i\sz{d} \in \bbR^{d \times d} \right\}_{i \in [n]}$ given as
\begin{align*}
    & \boldsymbol{U}_i\sz{d} = \left(\Pi_{t=1}^{n-i} \boldsymbol{A}_{n-t+1}\right)^\top, \quad  i \in [n-1], \quad \text{ and } \quad \boldsymbol{U}_n\sz{dn}=\boldsymbol{I}\sz{dn}, \\
    & \boldsymbol{V}_i\sz{d} = \left[\sum_{\tau=i+1}^{n+1} \left(\Pi_{t=1}^{\tau-i-1} \boldsymbol{A}_{\tau - t}\right)^\top \boldsymbol{P}_{\tau} \left(\Pi_{t=1}^{\tau-i-1} \boldsymbol{A}_{\tau-t}\right) \right]\left( \Pi_{t=1}^{n-i} \boldsymbol{A}_{n-t+1} \right)^{-1}, \quad  i \in [n].
\end{align*}
Note that $\bm{U}_i, \bm{V}_i$ are nonsingular for all $t \in [n]$, and
\begin{align*}
    \bm{U}_i \bm{V}_i^\top 
    = \bm{P}_{i+1} + \sum_{\tau=i+2}^{n+1} \left(\Pi_{t=1}^{\tau-i-1} \boldsymbol{A}_{\tau - t}\right)^\top \boldsymbol{P}_{\tau} \left(\Pi_{t=1}^{\tau-i-1} \boldsymbol{A}_{\tau-t}\right) \succeq \bm{P}_{i+1} \succ \bm{0}.
\end{align*}
Moreover,
\begin{align*}
    \bm{U}_i \bm{U}_{j}^{-1} \left(\bm{U}_{j} \bm{V}_i^\top - \bm{V}_{j}\bm{U}_i^\top\right) 
    = \bm{P}_{i+1} + \sum_{\tau=i+2}^{j} \left(\Pi_{t=1}^{\tau-i-1} \boldsymbol{A}_{\tau - t}\right)^\top \boldsymbol{P}_{\tau} \left(\Pi_{t=1}^{\tau-i-1} \boldsymbol{A}_{\tau-t}\right) \succeq \bm{P}_{i+1} \succ \bm{0},
\end{align*}
for $1\leq i < j \leq n$. Therefore, the block-factorizable matrix $\bm{Q}$ satisfies Assumption~\ref{assum:block-factorizable}.


\subsubsection{Multi-dimensional path-following problem for power-split hybrid electric vehicle control}  \label{Appendix:hev-block}
Recall the multi-dimensional path-following problem \eqref{path-following}, which can be formulated as the following MIQP problem with auxiliary input variables $\boldsymbol{x}_{[i]}$:
\begin{equation}
    \begin{aligned}
        \min \ \ & \sum_{i = 1}^{n+1} \left( \boldsymbol{s}_i - \boldsymbol{r}_i\right)^\top \boldsymbol{P}_i \left( \boldsymbol{s}_i - \boldsymbol{r}_i\right) + \sum_{i = 1}^{n} \boldsymbol{y}_{[i]}^\top \boldsymbol{R}_i \boldsymbol{y}_{[i]} + \sum_{i = 1}^{n} c_i z_{i}  \hspace{-3cm} \\ 
        \text{s.t. } \ 
        & \boldsymbol{s}_1 = \bm{b}_0,\\ 
        & \boldsymbol{s}_{i+1} = \boldsymbol{A}_i \boldsymbol{s}_{i} + \boldsymbol{x}_{[i]}  + \boldsymbol{b}_{i} , &&  i \in [n] \\ 
        & \boldsymbol{x}_{[i]} = \boldsymbol{G}_i \boldsymbol{y}_{[i]} + \boldsymbol{k}_i z_{i}, && i \in [n]  \\ 
        & \boldsymbol{s}_{i}^{\min} \leq \boldsymbol{s}_{i} \leq \boldsymbol{s}_{i}^{\max}, && i \in [n+1] \\ 
        & \boldsymbol{y}_{i}^{\min} z_{i} \leq \boldsymbol{y}_{[i]} \leq \boldsymbol{y}_{i}^{\max} z_{i} , && i \in [n] \\ 
        &z_{i} \in \{0,1\}, && i \in [n]  
    \end{aligned} \label{HEV_multi_repeat}
\end{equation}
This problem corresponds to an $n$-period MIQP problem in the form of \eqref{MIQP-ddim_rep} with additional quadratic costs on the control variables $\boldsymbol{y}_{[i]}$. Specifically, $\bm{f}_i=\bm{0}$, $i \in [n]$, and $C$ incorporates additional constraints 
\begin{align*}
    C = \Bigg\{ \left( \left\{\boldsymbol{s}_i\right\}_{i=1}^{n+1}, \bm{x}, \boldsymbol{z}\right) &\in \bbR^{(n+1)d} \times \bbR^{nd} \times \bbR^n: \exists \boldsymbol{y}_{[i]} \in \bbR^{d_y} \text{ for } 1 \leq i \leq n \text{ such that } \\
    & \ \boldsymbol{x}_{[i]} = \boldsymbol{G}_i \boldsymbol{y}_{[i]} + \boldsymbol{k}_i z_i, \ \boldsymbol{s}_{i+1}^{\min} \leq \boldsymbol{s}_{i+1} \leq \boldsymbol{s}_{i+1}^{\max}, \ \boldsymbol{y}_{i}^{\min} z_{i} \leq \boldsymbol{y}_{[i]} \leq \boldsymbol{y}_{i}^{\max} z_{i}, \ \  i \in [n] \Bigg\}.
\end{align*}
Therefore, the problem \eqref{HEV_multi_repeat} can be reformulated as follows:
\begin{equation*}
    \begin{aligned}
        \min \ \ & \boldsymbol{x}\sz{dn}^\top \boldsymbol{Q}\sz{dn} \boldsymbol{x}\sz{dn} + \sum_{i=1}^n \boldsymbol{y}_{[i]}^\top \boldsymbol{R}_i \boldsymbol{y}_{[i]} + \boldsymbol{a}\sz{dn}^\top \boldsymbol{x}\sz{dn} + \sum_{i=1}^n c_i z_i + v \\
        \text{s.t. } \ & \boldsymbol{x}_{[i]} = \boldsymbol{G}_i \boldsymbol{y}_{[i]} + \boldsymbol{k}_i z_i, && i \in [n] \\
        & \boldsymbol{s}_{i}^{\min} \leq \left( \Pi_{t=1}^{i-1} \boldsymbol{A}_{i-t} \right) \boldsymbol{b}_0 + \sum_{\tau=1}^{i-1} \left( \Pi_{t=1}^{i -\tau-1} \boldsymbol{A}_{i-t} \right) \left( \boldsymbol{x}_{[\tau]} + \boldsymbol{b}_{\tau} \right) \leq \boldsymbol{s}_{i}^{\max}, \quad && i \in [n] \\
        & \boldsymbol{y}_{i}^{\min} z_{i} \leq \boldsymbol{y}_{[i]} \leq \boldsymbol{y}_{i}^{\max} z_{i}, && i \in [n] \\
        &z_{i} \in \{0,1\}, && i \in [n].
    \end{aligned} 
\end{equation*}


\subsection{Positive definiteness of \texorpdfstring{$\bm{Q}$}{\textbf{Q}}} \label{Appendix:PD}

Here we show that Assumption~\ref{assump:factorable} and Assumption~\ref{assum:block-factorizable} are necessary and sufficient conditions for the positive definiteness of a factorizable matrix \eqref{two_seq_mat_repeat} and a block-factorizable matrix \eqref{two_seq_block_mat}, respectively.

\begin{proposition}\label{prop:PSD}
A factorable matrix $\bm{Q}$ defined as \eqref{two_seq_mat_repeat} is positive definite if and only if Assumption~\ref{assump:factorable} holds.
\end{proposition}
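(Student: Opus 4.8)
The plan is to prove the two implications separately: the forward direction ($\bm{Q}\succ\bm{0}\Rightarrow$ Assumption~\ref{assump:factorable}) is immediate from standard properties of positive definite matrices, while the reverse direction rests on an explicit determinant formula for factorizable matrices together with Sylvester's criterion.

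For necessity, I would recall that every principal submatrix of a positive definite matrix is positive definite. Applying this to the $1\times 1$ submatrix indexed by $\{i\}$ gives $Q_{ii}=u_iv_i>0$, and applying it to the $2\times 2$ submatrix indexed by $\{i,j\}$ with $i<j$, namely $\left(\begin{smallmatrix} u_iv_i & u_iv_j\\ u_iv_j & u_jv_j\end{smallmatrix}\right)$, forces its determinant $u_iv_iu_jv_j-(u_iv_j)^2=u_iv_j(u_jv_i-u_iv_j)$ to be positive. These are exactly the two conditions in Assumption~\ref{assump:factorable}.

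For sufficiency I would argue through leading principal minors. By Remark~\ref{rem:submatrix1d} the leading principal submatrix $\bm{Q}_{[k]}$ is again factorizable, built from $u_1,\dots,u_k$ and $v_1,\dots,v_k$. The key computational step is to establish, by induction on $k$, the closed form
\[
  \det\bm{Q}_{[k]} \;=\; u_1 v_k \prod_{i=1}^{k-1}\bigl(u_{i+1}v_i - u_i v_{i+1}\bigr).
\]
The base case $k=1$ is trivial; for the inductive step (using $v_{k-1}\neq 0$, which holds since $u_{k-1}v_{k-1}>0$) one performs the column operation replacing column $k$ of $\bm{Q}_{[k]}$ by (column $k$) $-\tfrac{v_k}{v_{k-1}}\cdot$(column $k-1$). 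This zeros out the first $k-1$ entries of that column and leaves $\tfrac{v_k}{v_{k-1}}(u_kv_{k-1}-u_{k-1}v_k)$ in position $(k,k)$, so Laplace expansion gives $\det\bm{Q}_{[k]}=\tfrac{v_k}{v_{k-1}}(u_kv_{k-1}-u_{k-1}v_k)\,\det\bm{Q}_{[k-1]}$, and the formula follows. The sign analysis is then short: writing $\sigma_i:=\operatorname{sign}(u_i)=\operatorname{sign}(v_i)$ (the equality because $u_iv_i>0$), the consecutive instance $j=i+1$ of the second condition of Assumption~\ref{assump:factorable}, $u_iv_{i+1}(u_{i+1}v_i-u_iv_{i+1})>0$, together with $\operatorname{sign}(u_iv_{i+1})=\sigma_i\sigma_{i+1}$ forces $\operatorname{sign}(u_{i+1}v_i-u_iv_{i+1})=\sigma_i\sigma_{i+1}$; hence
\[
  \operatorname{sign}\bigl(\det\bm{Q}_{[k]}\bigr)=\operatorname{sign}(u_1v_k)\prod_{i=1}^{k-1}\sigma_i\sigma_{i+1}=(\sigma_1\sigma_k)(\sigma_1\sigma_k)=1,
\]
using the telescoping $\prod_{i=1}^{k-1}\sigma_i\sigma_{i+1}=\sigma_1\sigma_k$. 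Thus all $n$ leading principal minors of $\bm{Q}$ are strictly positive, and Sylvester's criterion yields $\bm{Q}\succ\bm{0}$.

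I expect the determinant formula --- verifying that the single column operation behaves as claimed and packaging the induction cleanly --- to be the main obstacle; everything else is bookkeeping with signs. As a remark, the determinant formula already shows $\bm{Q}$ is nonsingular under Assumption~\ref{assump:factorable}, so an alternative route to sufficiency is to invoke Proposition~\ref{prop:rank-1} and write $\bm{Q}^{-1}=\bm{\Gamma}\operatorname{diag}(\delta_1,\dots,\delta_n)\bm{\Gamma}^{\top}$ with $\bm{\Gamma}$ unit lower triangular (hence invertible) and each $\delta_i>0$ under the Assumption, concluding $\bm{Q}^{-1}\succ\bm{0}$ and therefore $\bm{Q}\succ\bm{0}$.
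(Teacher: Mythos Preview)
Your proof is correct. Both the necessity argument (via $1\times 1$ and $2\times 2$ principal minors) and the sufficiency argument (via the determinant identity $\det\bm{Q}_{[k]}=u_1v_k\prod_{i=1}^{k-1}(u_{i+1}v_i-u_iv_{i+1})$ and Sylvester's criterion) check out; the column operation and the telescoping sign computation are fine.

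Your route differs from the paper's. The paper works entirely through the inverse: it invokes Proposition~\ref{prop:rank-1} to write $\bm{Q}^{-1}=\sum_i\delta_i\bm{\gamma}_i\bm{\gamma}_i^\top$, argues that $\bm{Q}\succ\bm{0}\Leftrightarrow\delta_i>0$ for all $i$ (for the non-obvious direction it exhibits an explicit $\bm{x}$ with $\bm{x}^\top\bm{Q}^{-1}\bm{x}=\delta_j$), and then checks that Assumption~\ref{assump:factorable} forces every $\delta_i>0$. This is precisely the ``alternative route'' you sketch at the end, except the paper also carries out the converse $\bm{Q}\succ\bm{0}\Rightarrow\bm{\delta}>\bm{0}$ rather than appealing directly to principal minors. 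Your primary argument is more self-contained (it does not presuppose the inverse formula) and yields the classical determinant expression for factorizable matrices as a by-product; the paper's approach has the advantage that it generalizes verbatim to the block case (Proposition~\ref{prop:PD_block}), where a scalar sign analysis is unavailable and one must reason about the positive definiteness of the blocks $\bm{\Delta}_i$ directly.
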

\begin{proof}
The conditions $\bm{Q}\succ \bm{0}$ and $\bm{Q^{-1}}\succ \bm{0}$ are equivalent, thus we can use the representation of Proposition~\ref{prop:rank-1} to prove the result. From
\eqref{Q_inv_mat}, it is obvious that if $\delta_i >0$ for all $i \in [n]$, then $\bm{Q} \succ \bm{0}$. We now show that the converse is also true.
Indeed, suppose that there exists  $j \in [n]$ such that $\delta_j \leq 0$. Take $\boldsymbol{x}$ given as
\begin{align*}
    & x_j = 1,\; x_{j-1}= \theta_j,\; x_{j-2} =  \theta_j \theta_{j-1},\;\dots,\; x_1 = \Pi_{t=2}^{j}  \theta_t,\quad \text{and} \quad x_{j+1}, \ldots , x_n =0.
\end{align*}
Then, 
\begin{align*}
    \boldsymbol{x}^\top \boldsymbol{Q}^{-1} \boldsymbol{x} = \sum_{i=1}^{n-1} \delta_i \left (x_i -  \theta_{i+1} x_{i+1} \right )^2 + \delta_n x_n^2 = \sum_{i=1}^{j-1} \delta_i \left[\left(\Pi_{t=i+1}^j  \theta_t \right) -  \theta_{i+1} \left(\Pi_{t=i+2}^j  \theta_t \right)\right]^2 + \delta_j = \delta_j \leq 0;
\end{align*}
therefore,  $\boldsymbol{Q}^{-1}$ is not positive definite. 
Finally, we verify that $\bm{\delta}>\bm{0}$ under Assumption~\ref{assump:factorable}. 
From \eqref{pq_closed_form}, $\delta_i > 0$ for $i \in [n-1]$ by \eqref{assump1_cond2_v2}, an equivalent second condition, and $\delta_n >0$ by the first condition. 
\end{proof}

\begin{proposition} \label{prop:PD_block}
A block-factorable matrix $\boldsymbol{Q}\sz{dn}$ defined as \eqref{two_seq_block_mat} is positive definite if and only if Assumption~\ref{assum:block-factorizable} holds.
\end{proposition}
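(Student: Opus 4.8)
The plan is to mirror the proof of Proposition~\ref{prop:PSD}, replacing scalar inequalities with positive-definiteness statements and the scalar rank-one decomposition of $\bm{Q}^{-1}$ with the rank-$d$ decomposition of Proposition~\ref{prop:block_factorizable_inverse}. Since $\bm{Q}\sz{dn}\succ\bm{0}$ if and only if $\bm{Q}\sz{dn^{-1}}\succ\bm{0}$, and Proposition~\ref{prop:block_factorizable_inverse} gives
\begin{align*}
    \boldsymbol{Q}\sz{dn^{-1}} = \sum_{i=1}^{n} \boldsymbol{\Gamma}_i \boldsymbol{\Delta}_i \boldsymbol{\Gamma}_{i}^\top,
\end{align*}
with $\boldsymbol{\Gamma}_i = \boldsymbol{E}_i - \boldsymbol{E}_{i+1}\boldsymbol{\Theta}_{i+1}^\top$ for $i\in[n-1]$ and $\boldsymbol{\Gamma}_n=\boldsymbol{E}_n$, the key observation is that for any $\bm{y}\in\R^{dn}$ we have $\bm{y}^\top\boldsymbol{Q}\sz{dn^{-1}}\bm{y} = \sum_{i=1}^n \bm{y}^\top\boldsymbol{\Gamma}_i\boldsymbol{\Delta}_i\boldsymbol{\Gamma}_i^\top\bm{y}$, which is a sum of terms of the form $\bm{w}_i^\top\boldsymbol{\Delta}_i\bm{w}_i$ with $\bm{w}_i = \boldsymbol{\Gamma}_i^\top\bm{y}\in\R^d$.

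First I would show sufficiency: if Assumption~\ref{assum:block-factorizable} holds, then each $\boldsymbol{\Delta}_i\succ\bm{0}$. Indeed, $\boldsymbol{\Delta}_i = (\bm{U}_i\bm{V}_i^\top - \bm{U}_i\bm{U}_{i+1}^{-1}\bm{V}_{i+1}\bm{U}_i^\top)^{-1}$ for $i\in[n-1]$ and $\boldsymbol{\Delta}_n = (\bm{U}_n\bm{V}_n^\top)^{-1}$; Assumption~\ref{assum:block-factorizable} asserts exactly that the matrices being inverted are positive definite (using the equivalence noted after the assumption, $\bm{U}_i\bm{V}_i^\top - \bm{U}_i\bm{U}_{i+1}^{-1}\bm{V}_{i+1}\bm{U}_i^\top = \bm{U}_i\bm{U}_{i+1}^{-1}(\bm{U}_{i+1}\bm{V}_i^\top - \bm{V}_{i+1}\bm{U}_i^\top)$), so their inverses are too. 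Hence $\bm{y}^\top\boldsymbol{Q}\sz{dn^{-1}}\bm{y}\geq 0$ for all $\bm{y}$, and it equals zero only if $\boldsymbol{\Gamma}_i^\top\bm{y}=\bm{0}$ for every $i$. Writing $\bm{y} = (\bm{y}_1^\top,\dots,\bm{y}_n^\top)^\top$ in block form, $\boldsymbol{\Gamma}_n^\top\bm{y}=\bm{y}_n=\bm{0}$, and then $\boldsymbol{\Gamma}_{n-1}^\top\bm{y} = \bm{y}_{n-1} - \boldsymbol{\Theta}_n\bm{y}_n = \bm{y}_{n-1} = \bm{0}$, and inductively backward $\bm{y}_i=\bm{0}$ for all $i$; so $\boldsymbol{Q}\sz{dn^{-1}}\succ\bm{0}$.

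For necessity I would argue the contrapositive: if Assumption~\ref{assum:block-factorizable} fails, then some $\boldsymbol{\Delta}_j$ (or rather the matrix whose inverse defines it) fails to be positive definite, so there is a nonzero $\bm{w}\in\R^d$ with $\bm{w}^\top\boldsymbol{\Delta}_j\bm{w}\leq 0$ — being careful about the degenerate case where the relevant matrix is singular, in which case $\boldsymbol{Q}\sz{dn}$ itself is singular and the claim is immediate, so assume nonsingularity throughout. Then, exactly as in the scalar proof, I would build $\bm{y}$ with $\bm{y}_j = \bm{w}$, $\bm{y}_{j-1} = \boldsymbol{\Theta}_j^\top\bm{w}$, $\bm{y}_{j-2} = \boldsymbol{\Theta}_{j-1}^\top\boldsymbol{\Theta}_j^\top\bm{w}$, down to $\bm{y}_1 = \boldsymbol{\Theta}_2^\top\cdots\boldsymbol{\Theta}_j^\top\bm{w}$, and $\bm{y}_{j+1}=\cdots=\bm{y}_n=\bm{0}$. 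This choice is designed so that $\boldsymbol{\Gamma}_i^\top\bm{y} = \bm{0}$ for $i<j$ (the telescoping $\bm{y}_i - \boldsymbol{\Theta}_{i+1}^\top\bm{y}_{i+1}$ cancels) and $\boldsymbol{\Gamma}_i^\top\bm{y}=\bm{0}$ for $i>j$ trivially, while $\boldsymbol{\Gamma}_j^\top\bm{y} = \bm{y}_j = \bm{w}$ (since $\bm{y}_{j+1}=\bm{0}$). Therefore $\bm{y}^\top\boldsymbol{Q}\sz{dn^{-1}}\bm{y} = \bm{w}^\top\boldsymbol{\Delta}_j\bm{w}\leq 0$ with $\bm{y}\neq\bm{0}$, so $\boldsymbol{Q}\sz{dn^{-1}}$ and hence $\boldsymbol{Q}\sz{dn}$ is not positive definite. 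The main obstacle I anticipate is purely bookkeeping: verifying the index-shifting in the definitions of $\boldsymbol{\Gamma}_i$, $\boldsymbol{\Theta}_{i+1}$, and $\boldsymbol{\Delta}_i$ lines up so that the telescoping cancellation is exact, and handling cleanly the edge case where the $2\times2$-block Schur complement is singular rather than negative definite (there one should instead exhibit a null vector of $\boldsymbol{Q}\sz{dn}$ directly, or simply note the contrapositive of Assumption~\ref{assum:block-factorizable} includes "not positive definite" which covers both singular and indefinite). Everything else is a direct translation of the $d=1$ argument.
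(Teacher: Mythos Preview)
Your proposal is correct and essentially identical to the paper's proof, which likewise uses the decomposition $\bm{Q}^{-1}=\sum_i\boldsymbol{\Gamma}_i\boldsymbol{\Delta}_i\boldsymbol{\Gamma}_i^\top$ and constructs the same back-substituted test vector to show $\bm{y}^\top\bm{Q}^{-1}\bm{y}=\bm{w}^\top\boldsymbol{\Delta}_j\bm{w}\le 0$. One bookkeeping correction you anticipated: since $\boldsymbol{\Gamma}_i^\top\bm{y}=\bm{y}_{[i]}-\boldsymbol{\Theta}_{i+1}\bm{y}_{[i+1]}$ (no transpose on $\boldsymbol{\Theta}_{i+1}$), the recursion should be $\bm{y}_{j-1}=\boldsymbol{\Theta}_j\bm{w}$, $\bm{y}_{j-2}=\boldsymbol{\Theta}_{j-1}\boldsymbol{\Theta}_j\bm{w}$, etc., exactly as in the paper.
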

\begin{proof}
Since $\bm{Q}^{-1}$ can be decomposed as \eqref{Q_decomp_block}, it is trivial to check that if $\bm{\Delta}_i \succ \bm{0}$, $\forall i \in [n]$, then $\bm{Q} \succ \bm{0}$. 
Now suppose $\bm{\Delta}_j$ for some $j \in [n]$ is not positive definite while $\bm{Q} \succ \bm{0}$.
Then, there exists $\bm{y} \in \R^{d}$ such that $\bm{y} \neq \bm{0}$ and $\bm{y}^\top \bm{\Delta}_j \bm{y} \leq 0$.
Similarly, as in the proof of Proposition~\ref{prop:PSD}, take $\bm{x}$ as
\begin{align*}
    \bm{x}_{[j]} = \bm{y}, \; \bm{x}_{[j-1]} = \bm{\Theta}_j\bm{y}, \; \bm{x}_{[j-2]} = \bm{\Theta}_{j-1} \bm{\Theta}_{j}\bm{y}, \; \ldots, \; \bm{x}_{[1]} = \left(\Pi_{t=2}^{j} \bm{\Theta}_{t}\right) \bm{y}, \quad \text{and} \quad \bm{x}_{[j+1]},\ldots,\bm{x}_{[n]} = \bm{0}.
\end{align*}
Then, 
\begin{align*}
    \boldsymbol{x}^\top \boldsymbol{Q}^{-1} \boldsymbol{x} = \ & \sum_{i=1}^{n-1} \left( \bm{x}_{[i]} - \bm{\Theta}_{i+1} \bm{x}_{[i+1]} \right)^\top \bm{\Delta}_i \left( \bm{x}_{[i]} - \bm{\Theta}_{i+1} \bm{x}_{[i+1]} \right) + \bm{x}_{[n]}^\top \bm{\Delta}_n \bm{x}_{[n]} \\
    = \ & \sum_{i=1}^{j-1} \left[\left(\Pi_{t=i+1}^j \bm{\Theta}_t \right) \bm{y} - \bm{\Theta}_{i+1} \left(\Pi_{t=i+2}^j \bm{\Theta}_t \right) \bm{y}\right]^\top \bm{\Delta}_i  \left[\left(\Pi_{t=i+1}^j \bm{\Theta}_t \right) \bm{y} - \bm{\Theta}_{i+1} \left(\Pi_{t=i+2}^j \bm{\Theta}_t \right) \bm{y}\right] + \bm{y}^\top \bm{\Delta}_j \bm{y}\\
    = \ & \bm{y}^\top \bm{\Delta}_j \bm{y} \leq 0, 
\end{align*}
which contradicts $\bm{Q} \succ \bm{0}$. Therefore, for $\bm{Q}$ to be positive definite, it must be that $\bm{\Delta}_i \succ \bm{0}$ for all $i \in [n]$, and it is trivial to check that Assumption~\ref{assum:block-factorizable} guarantees this condition.
\end{proof}

\subsection{Proof of Proposition~\ref{prop:block_factorizable_inverse}} \label{Appendix:proof_prop_block_inv} 
In this section, we present a thorough proof of Proposition~\ref{prop:block_factorizable_inverse} from scratch.
Let 
\begin{align*}
    \boldsymbol{\Xi} 
    & = \sum_{i=1}^{n-1} \left( \boldsymbol{E}_i - \boldsymbol{E}_{i+1} \boldsymbol{\Theta}_{i+1}^\top \right) \boldsymbol{\Delta}_i \left( \boldsymbol{E}_i - \boldsymbol{E}_{i+1} \boldsymbol{\Theta}_{i+1}^\top \right)^\top + \boldsymbol{E}_{n} \boldsymbol{\Delta}_n \boldsymbol{E}_n^\top, 
\end{align*}
where
\begin{align*}
    \boldsymbol{\Delta}_i & = \left( \boldsymbol{U}_i \boldsymbol{V}_i^\top - \boldsymbol{U}_i \boldsymbol{U}_{i+1}^{-1} \boldsymbol{V}_{i+1} \boldsymbol{U}_i^\top \right)^{-1}, \ \boldsymbol{\Theta}_{i+1} = \boldsymbol{U}_{i} \boldsymbol{U}_{i+1}^{-1}, \ \ i \in [n-1], \quad \text{ and } \quad \boldsymbol{\Delta}_n  = \left( \boldsymbol{U}_n \boldsymbol{V}_n^\top \right)^{-1}.
\end{align*}
We show $\boldsymbol{\Xi}^{-1}  = \bm{U} \bullet \bm{V}$. 

The matrix $\boldsymbol{\Xi}$ can be factorized as follows:
    \begin{align}
        \boldsymbol{\Xi} = \overbrace{\begin{bmatrix}
            \boldsymbol{I}\sz{d} & & & & \\
            -\boldsymbol{\Theta}_2^\top & \boldsymbol{I}\sz{d} & & & \\
            & \ddots & \ddots & & \\
            & & -\boldsymbol{\Theta}_{n-1}^\top & \boldsymbol{I}\sz{d} & \\
            & & & -\boldsymbol{\Theta}_{n}^\top & \boldsymbol{I}\sz{d}
        \end{bmatrix}}^{\boldsymbol{L}} \overbrace{\begin{bmatrix}
            \boldsymbol{\Delta}_1 & & & & \\
            & \boldsymbol{\Delta}_2 & & & \\
            & & \ddots & & \\
            & & & \boldsymbol{\Delta}_{n-1} & \\
            & & & & \boldsymbol{\Delta}_n
        \end{bmatrix}}^{\boldsymbol{\Delta}} \overbrace{\begin{bmatrix}
            \boldsymbol{I}\sz{d} & -\boldsymbol{\Theta}_2 & & & \\
            & \boldsymbol{I}\sz{d} & -\boldsymbol{\Theta}_3 & & \\
            & & \ddots & \ddots & \\
            & & & \boldsymbol{I}\sz{d} & -\boldsymbol{\Theta}_{n}\\
            & & & & \boldsymbol{I}\sz{d}
        \end{bmatrix}}^{\boldsymbol{L}^\top} \label{Q_decomp_block}
    \end{align}
    Let $\Bar{\boldsymbol{L}} = \boldsymbol{L} - \boldsymbol{I}\sz{dn}$, a block subdiagonal matrix. 
    \begin{claim}\label{claim:Lbar_k}
    \begin{align}
        \Bar{\boldsymbol{L}}^k = \begin{bmatrix}
            \boldsymbol{O}\sz{d} & \boldsymbol{O}\sz{d} & \cdots & \boldsymbol{O}\sz{d} & \boldsymbol{O}\sz{d} & \cdots & \boldsymbol{O}\sz{d} \\
            \vdots & \vdots & \ddots & \vdots & \vdots & & \vdots \\
            \boldsymbol{O}\sz{d} & \boldsymbol{O}\sz{d} & \cdots & \boldsymbol{O}\sz{d} & \boldsymbol{O}\sz{d} & \cdots & \boldsymbol{O}\sz{d} \\
            (-1)^k \boldsymbol{\Theta}_{k+1}^\top \cdots \boldsymbol{\Theta}_{2}^\top  & \boldsymbol{O}\sz{d} & \cdots & \boldsymbol{O}\sz{d} & \boldsymbol{O}\sz{d} & \cdots & \boldsymbol{O}\sz{d} \\
            \boldsymbol{O}\sz{d} & (-1)^k \boldsymbol{\Theta}_{k+2}^\top \cdots \boldsymbol{\Theta}_{3}^\top & \cdots & \boldsymbol{O}\sz{d} & \boldsymbol{O}\sz{d} & \cdots & \boldsymbol{O}\sz{d} \\
            \vdots & \vdots & \ddots & \vdots & \vdots & & \vdots \\
            \boldsymbol{O}\sz{d} & \boldsymbol{O}\sz{d} & \cdots & (-1)^k \boldsymbol{\Theta}_{n}^\top \cdots \boldsymbol{\Theta}_{n-k+1}^\top  & \boldsymbol{O}\sz{d} & \cdots & \boldsymbol{O}\sz{d}
            \end{bmatrix} \label{Lbar_k}
    \end{align}    
    and, thus, $\Bar{\boldsymbol{L}}$ is a nilpotent matrix with the index $n$. 
    \end{claim}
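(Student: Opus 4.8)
The claim is about computing powers of the strictly lower block-bidiagonal matrix $\bar{\boldsymbol{L}} = \boldsymbol{L} - \boldsymbol{I}^{(dn)}$, whose only nonzero blocks are $(\bar{\boldsymbol{L}})_{[i+1,i]} = -\boldsymbol{\Theta}_{i+1}^\top$ for $i \in [n-1]$. The plan is to prove formula \eqref{Lbar_k} by induction on $k$, then deduce nilpotency immediately as a corollary.

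First I would set up the base case $k=1$, which just restates the definition of $\bar{\boldsymbol{L}}$: the block in position $(i+1,i)$ is $(-1)^1 \boldsymbol{\Theta}_{i+1}^\top$, matching the displayed pattern with a single factor. For the inductive step, assume $\bar{\boldsymbol{L}}^k$ has nonzero blocks only in positions $(i+k, i)$ for $i \in [n-k]$, each equal to $(-1)^k \boldsymbol{\Theta}_{i+k}^\top \boldsymbol{\Theta}_{i+k-1}^\top \cdots \boldsymbol{\Theta}_{i+1}^\top$. Then $\bar{\boldsymbol{L}}^{k+1} = \bar{\boldsymbol{L}} \cdot \bar{\boldsymbol{L}}^k$, and using the block-matrix product, the $(p,q)$-block of the product is $\sum_{r} (\bar{\boldsymbol{L}})_{[p,r]} (\bar{\boldsymbol{L}}^k)_{[r,q]}$. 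Since $(\bar{\boldsymbol{L}})_{[p,r]}$ is nonzero only when $r = p-1$ (and $p \geq 2$), and $(\bar{\boldsymbol{L}}^k)_{[p-1,q]}$ is nonzero only when $p-1 = q+k$, the product block is nonzero only when $p = q + k + 1$, and equals $(-\boldsymbol{\Theta}_{p}^\top)\bigl((-1)^k \boldsymbol{\Theta}_{p-1}^\top \cdots \boldsymbol{\Theta}_{q+1}^\top\bigr) = (-1)^{k+1} \boldsymbol{\Theta}_{q+k+1}^\top \cdots \boldsymbol{\Theta}_{q+1}^\top$, which is exactly the claimed form for exponent $k+1$. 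One should be slightly careful about index ranges: the block row index $p = q + k + 1$ must satisfy $p \leq n$, i.e. $q \leq n - k - 1$, so $\bar{\boldsymbol{L}}^{k+1}$ indeed has nonzero blocks only for $q \in [n-k-1]$.

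Finally, nilpotency with index $n$ follows directly: by the formula, $\bar{\boldsymbol{L}}^{n-1}$ has exactly one nonzero block (in position $(n,1)$, namely $(-1)^{n-1}\boldsymbol{\Theta}_n^\top \cdots \boldsymbol{\Theta}_2^\top$), which is nonsingular because each $\boldsymbol{\Theta}_{i+1} = \boldsymbol{U}_i \boldsymbol{U}_{i+1}^{-1}$ is a product of invertible matrices; hence $\bar{\boldsymbol{L}}^{n-1} \neq \boldsymbol{O}$. Meanwhile, for $k = n$ the index constraint $q \in [n-k] = [0] = \emptyset$ leaves no nonzero blocks, so $\bar{\boldsymbol{L}}^n = \boldsymbol{O}$. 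Therefore the nilpotency index is exactly $n$.

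There is no real obstacle here — the whole argument is a bookkeeping induction on block indices. The only place to be careful is keeping the subscript ranges on the $\boldsymbol{\Theta}$ factors consistent between rows (the product telescopes from $\boldsymbol{\Theta}_{q+k+1}^\top$ down to $\boldsymbol{\Theta}_{q+1}^\top$), and making sure the "nonsingular" remark for the index statement invokes invertibility of the $\boldsymbol{U}_i$, which is guaranteed by Assumption~\ref{assum:block-factorizable} (via $\bm{U}_i \bm{V}_i^\top \succ \bm{0}$).
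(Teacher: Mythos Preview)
Your proposal is correct and follows essentially the same induction-on-$k$ argument as the paper's proof; the only cosmetic differences are that the paper starts the base case at $k=2$ and writes the inductive step as $\bar{\boldsymbol{L}}^{k+1} = \bar{\boldsymbol{L}}^{k}\bar{\boldsymbol{L}}$ rather than $\bar{\boldsymbol{L}}\cdot\bar{\boldsymbol{L}}^{k}$. Your explicit remark that the single surviving $(n,1)$ block of $\bar{\boldsymbol{L}}^{n-1}$ is nonsingular (via invertibility of each $\boldsymbol{U}_i$) is a small improvement, since the paper asserts ``index $n$'' without spelling out why $\bar{\boldsymbol{L}}^{n-1}\neq\boldsymbol{O}$.
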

    \begin{proof}
        We prove the claim by induction. 
        \begin{enumerate}[label=\roman*.]
            \item $k=2$: The $(i,j)$-th block $\left(\Bar{\boldsymbol{L}}^2\right)_{[ij]}$ is 
            \begin{align*}
                \left(\Bar{\boldsymbol{L}}^2\right)_{[ij]} = \left(- \boldsymbol{\Theta}_i^\top \boldsymbol{E}_{i-1}^\top \right) \left( - \boldsymbol{E}_{j+1} \boldsymbol{\Theta}_{j+1}^\top \right) = \begin{cases}
                    \boldsymbol{\Theta}_{i}^\top \boldsymbol{\Theta}_{j+1}^\top, & \text{ if } i = j+2, \\
                    \boldsymbol{O}\sz{d}, & \text{ o.w.}
                \end{cases}
            \end{align*}
            for $i \geq 2$ and $j \leq n-1$, and $\left(\Bar{\boldsymbol{L}}^2\right)_{[1,k]}, \left(\Bar{\boldsymbol{L}}^2\right)_{[\ell,n]} = 0$ for $k \in [2,n]$ and $\ell \in [n-1]$.
            \item Suppose $\Bar{\boldsymbol{L}}^k$ is in the form of \eqref{Lbar_k}. Then, $\Bar{\boldsymbol{L}}^{k+1} = \Bar{\boldsymbol{L}}^{k} \Bar{\boldsymbol{L}}$, and the $(i,j)$-th block $\left(\Bar{\boldsymbol{L}}^{k+1}\right)_{[ij]}$ is as follows:
            \begin{align*}
                \left(\Bar{\boldsymbol{L}}^{k+1}\right)_{[ij]} = \left( (-1)^k \boldsymbol{\Theta}_{i}^\top \cdots \boldsymbol{\Theta}_{i-k+1}^\top \boldsymbol{E}_{i-k}^\top \right) \left( - \boldsymbol{E}_{j+1} \boldsymbol{\Theta}_{j+1}^\top \right) = \begin{cases}
                    (-1)^{k+1} \boldsymbol{\Theta}_{i}^\top \cdots \boldsymbol{\Theta}_{i-k+1}^\top \boldsymbol{\Theta}_{j+1}^\top, & \text{ if } i = j+k+1, \\
                    \boldsymbol{O}\sz{d}, & \text{ o.w.}
                \end{cases}
            \end{align*}
            for $i \geq k+1$ and $ j \leq n-1$, and $\left(\Bar{\boldsymbol{L}}^2\right)_{[ij]} = 0$, if $i \leq k$ or $j = n$.
        \end{enumerate}
        Therefore, \eqref{Lbar_k} holds for $k \in [n-1]$. Note that since $\Bar{\boldsymbol{L}}^{n-1}$ has all zero except the $(n,1)$-th block
        \begin{align*}
            \left(\Bar{\boldsymbol{L}}^{n-1}\right)_{n,1} = (-1)^{n-1} \boldsymbol{\Theta}_{n}^\top \cdots \boldsymbol{\Theta}_2^\top.
        \end{align*}
        Thus, $\Bar{\boldsymbol{L}}^{n} = \boldsymbol{O}\sz{dn}$ as the $(1,j)$-th block of $\Bar{\boldsymbol{L}}$ is $\boldsymbol{O}\sz{d}$, $\forall j \in [n]$.
    \end{proof}

\noindent Note that $\bm{\Theta}_{k+\ell}^\top \cdots \bm{\Theta}_{\ell + 1}^\top = \boldsymbol{U}_{k + \ell}^{-\top} \boldsymbol{U}_{\ell}^\top$ for $k \in [n]$ and $\ell \in [n-k]$, as $\boldsymbol{\Theta}_{i+1} = \boldsymbol{U}_{i} \boldsymbol{U}_{i+1}^{-1}$ for $i \in [n-1]$. Utilizing Claim \ref{claim:Lbar_k}, $\boldsymbol{L}^{-1}$ can be computed as follows:
\begin{align*}
    \boldsymbol{I}\sz{dn} &= \boldsymbol{I}\sz{dn} + \Bar{\boldsymbol{L}}^n = \left( \boldsymbol{I}\sz{dn} + \Bar{\boldsymbol{L}} \right) \left( \boldsymbol{I}\sz{dn} - \Bar{\boldsymbol{L}} + \Bar{\boldsymbol{L}}^2 - \Bar{\boldsymbol{L}}^3 + \cdots + (-1)^{n-1} \Bar{\boldsymbol{L}}^{n-1} \right) \\
    \Rightarrow \  \boldsymbol{L}^{-1} &= \left( \boldsymbol{I}\sz{dn} + \Bar{\boldsymbol{L}} \right)^{-1} \\
    &= \boldsymbol{I}\sz{dn} - \Bar{\boldsymbol{L}} + \Bar{\boldsymbol{L}}^2 - \Bar{\boldsymbol{L}}^3 + \cdots + (-1)^{n-1} \Bar{\boldsymbol{L}}^{n-1}\\
    & = \begin{bmatrix}
            \boldsymbol{I}\sz{d} & &&&&& \\
            \boldsymbol{\Theta}_{2}^\top & \boldsymbol{I}\sz{d} & &&&&\\
            \vdots & \vdots & \ddots & &&& \\
            \left(\boldsymbol{\Theta}_{k}^\top \cdots \boldsymbol{\Theta}_{2}^\top\right) & \left(\boldsymbol{\Theta}_{k}^\top \cdots \boldsymbol{\Theta}_{3}^\top\right) & \cdots & \boldsymbol{I}\sz{d} & && \\
            \left(\boldsymbol{\Theta}_{k+1}^\top \cdots \boldsymbol{\Theta}_{2}^\top\right)  & \left(\boldsymbol{\Theta}_{k+1}^\top \cdots \boldsymbol{\Theta}_{3}^\top\right) & \cdots & \boldsymbol{\Theta}_{k+1}^\top & \boldsymbol{I}\sz{d} & & \\
            \vdots & \vdots & \ddots & \vdots & \vdots & \ddots & \\
            \left(\boldsymbol{\Theta}_{n}^\top \cdots \boldsymbol{\Theta}_{2}^\top\right) & \left(\boldsymbol{\Theta}_{n}^\top \cdots \boldsymbol{\Theta}_{3}^\top\right) & \cdots & \left(\boldsymbol{\Theta}_{n}^\top \cdots \boldsymbol{\Theta}_{n-k+1}^\top\right)  & \left(\boldsymbol{\Theta}_{n}^\top \cdots \boldsymbol{\Theta}_{n-k+2}^\top\right) & \cdots & \boldsymbol{I}\sz{d}
            \end{bmatrix}\\
            & = \begin{bmatrix}
            \boldsymbol{I}\sz{d} & & & & & & \\
            \boldsymbol{U}_{2}^{-\top} \boldsymbol{U}_{1}^\top & \boldsymbol{I}\sz{d} &  & & & & \\
            \vdots & \vdots & \ddots  & & & & \\
            \boldsymbol{U}_{k}^{-\top} \boldsymbol{U}_{1}^\top & \boldsymbol{U}_{k}^{-\top} \boldsymbol{U}_{2}^\top & \cdots & \boldsymbol{I}\sz{d} & & &  \\
            \boldsymbol{U}_{k+1}^{-\top} \boldsymbol{U}_{1}^\top  & \boldsymbol{U}_{k+1}^{-\top} \boldsymbol{U}_{2}^\top & \cdots & \boldsymbol{U}_{k+1}^{-\top} \boldsymbol{U}_{k}^\top & \boldsymbol{I}\sz{d} & & \\
            \vdots & \vdots & \ddots & \vdots & \vdots & \ddots &  \\
            \boldsymbol{U}_{n}^{-\top} \boldsymbol{U}_{1}^\top & \boldsymbol{U}_{n}^{-\top} \boldsymbol{U}_{2}^\top & \cdots & \boldsymbol{U}_{n}^{-\top} \boldsymbol{U}_{n-k}^\top  & \boldsymbol{U}_{n}^{-\top} \boldsymbol{U}_{n-k+1}^\top & \cdots & \boldsymbol{I}\sz{d}
        \end{bmatrix}.
\end{align*}
Then, $\boldsymbol{\Xi}^{-1} = \boldsymbol{L}^{-\top} \boldsymbol{\Delta}^{-1} \boldsymbol{L}^{-1}$, where $\boldsymbol{\Delta}$ is a block diagonal matrix with $i$-th diagonal block is defined as 
\begin{align*}
    \boldsymbol{\Delta}_i & = \left( \boldsymbol{U}_i \boldsymbol{V}_i^\top - \boldsymbol{U}_i \boldsymbol{U}_{i+1}^{-1} \boldsymbol{V}_{i+1} \boldsymbol{U}_i^\top \right)^{-1}, \ \  i \in [n-1], \quad \text{ and} \quad 
    \boldsymbol{\Delta}_n = \left( \boldsymbol{U}_n \boldsymbol{V}_n^\top \right)^{-1}.
\end{align*}
Then, the $(i,j)$-th block of $\boldsymbol{\Xi}^{-1}$ for $1 \leq i \leq j \leq n$ can be computed as follows:
\begin{align*}
    \boldsymbol{\Xi}^{-1}_{[ij]} & = \left(\boldsymbol{L}^{-1}_i \right)^\top \boldsymbol{\Delta}^{-1} \boldsymbol{L}^{-1}_j\\
    & = \boldsymbol{U}_i \begin{bmatrix}
        \boldsymbol{O}\sz{d} & \cdots & \boldsymbol{O}\sz{d} & \boldsymbol{U}_i^{-1} & \cdots & \boldsymbol{U}_{n}^{-1}
    \end{bmatrix} 
    \begin{bmatrix}
        \boldsymbol{\Delta}_1^{-1} & & & & &\\
        & \ddots & & & & \\
        & & \boldsymbol{\Delta}_{j-1}^{-1} & & & \\
        & & & \boldsymbol{\Delta}_j^{-1} & & \\
        & & & & \ddots & \\
        & & & & & \boldsymbol{\Delta}_n^{-1}
    \end{bmatrix}
    \begin{bmatrix}
        \boldsymbol{O}\sz{d} \\ \vdots \\ \boldsymbol{O}\sz{d} \\ \boldsymbol{U}_j^{-\top} \\ \vdots \\ \boldsymbol{U}_{n}^{-\top}
    \end{bmatrix} \boldsymbol{U}_j^\top\\
    & = \sum_{k=j}^{n} \boldsymbol{U}_i \boldsymbol{U}_k^{-1} \boldsymbol{\Delta}_j^{-1} \boldsymbol{U}_k^{-\top} \boldsymbol{U}_j^\top \\
    & = \sum_{k=j}^{n-1} \boldsymbol{U}_i \boldsymbol{U}_k^{-1} \left( \boldsymbol{U}_k \boldsymbol{V}_k^\top - \boldsymbol{U}_k \boldsymbol{U}_{k+1}^{-1} \boldsymbol{V}_{k+1} \boldsymbol{U}_k^\top \right) \boldsymbol{U}_k^{-\top} \boldsymbol{U}_j^\top + \boldsymbol{U}_i \boldsymbol{U}_n^{-1} \left(\boldsymbol{U}_n \boldsymbol{V}_n^\top \right) \boldsymbol{U}_n^{-\top} \boldsymbol{U}_j^\top\\
    & = \sum_{k=j}^{n-1} \left( \boldsymbol{U}_i \boldsymbol{V}_k^\top \boldsymbol{U}_k^{-\top} \boldsymbol{U}_j^\top - \boldsymbol{U}_i \boldsymbol{U}_{k+1}^{-1} \boldsymbol{V}_{k+1} \boldsymbol{U}_j^\top \right) + \boldsymbol{U}_i \boldsymbol{V}_n^\top \boldsymbol{U}_n^{-\top} \boldsymbol{U}_j^\top \\
    & = \boldsymbol{U}_i \boldsymbol{V}_j^\top \bm{U}_j^{-\top} \bm{U}_j^\top - \bm{U}_i \bm{U}_n^{-1} \left( \bm{V}_n \bm{U}_n^\top - \bm{U}_n \bm{V}_n^\top \right) \bm{U}_n^{-\top} \bm{U}_j \\
    & = \boldsymbol{U}_i \boldsymbol{V}_j^\top
\end{align*}
where $\boldsymbol{L}^{-1}_k$ is the $k$-th column vector of $\boldsymbol{L}^{-1}$ and the last equation holds as $\bm{U}_n \bm{V}_n^\top$ is symmetric. Since $\boldsymbol{\Xi}$ is symmetric, $\boldsymbol{\Xi}^{-1}_{[ij]} = \left(\boldsymbol{\Xi}^{-1}_{[ji]}\right)^\top = \left(\boldsymbol{U}_j \boldsymbol{V}_i^\top \right)^\top  = \boldsymbol{V}_i \boldsymbol{U}_j^\top $ for $1 \leq j < i \leq n$.

\end{document}